\documentclass[11pt]{amsart}
\usepackage{amssymb, amsmath,latexsym,amsfonts,amsbsy, amsthm,mathtools,graphicx,color}
\usepackage{float}
\usepackage{hyperref}

\setlength{\oddsidemargin}{0mm}
\setlength{\evensidemargin}{0mm} \setlength{\topmargin}{0mm}
\setlength{\textheight}{220mm} \setlength{\textwidth}{155mm}

\numberwithin{equation}{section}
\allowdisplaybreaks

\usepackage{mathrsfs,enumerate,extarrows}
\let\al=\alpha

\let\g=\gamma

\let\la=\lambda

\let\f=\frac

\let\na=\nabla

\let\pa=\partial

\let\epsilon=\varepsilon


\def\cF{{\mathcal F}}


\def\R{\mathbb R}

\def\Z{\mathbb Z}

\def\dx{\mathrm{d}x}
\def\dt{\mathrm{d}t}
\def\ds{\mathrm{d}s}

\def\dive{\mathop{\rm div}\nolimits}

\def\eqdef{\buildrel\hbox{\footnotesize def}\over =}

\newcommand{\ssubset}{\subset\joinrel\subset}

\newcommand{\andf}{~\text{ and }~}
\newcommand{\with}{~\text{ with }~}

\newcommand{\beq}{\begin{equation}}
	\newcommand{\eeq}{\end{equation}}
\newcommand{\ben}{\begin{eqnarray}}
	\newcommand{\een}{\end{eqnarray}}
\newcommand{\beno}{\begin{eqnarray*}}
	\newcommand{\eeno}{\end{eqnarray*}}

\newtheorem*{Theorem}{Theorem}

\newtheorem{theorem}{Theorem}[section]
\newtheorem{definition}[theorem]{Definition}
\newtheorem{lemma}[theorem]{Lemma}

\newtheorem{corollary}[theorem]{Corollary}
\theoremstyle{remark}
\newtheorem{remark}{Remark}[section]





\begin{document}

\title[Global well-posedness of inhomogeneous Navier-Stokes equations]
{Global well-posedness of inhomogeneous Navier-Stokes equations with bounded density}

\author[T. Hao]{Tiantian Hao}
\address[T. Hao]{
	School of Mathematical Sciences, Peking University, Beijing 100871, China}
\email{haotiantian@pku.edu.cn}

\author[F. Shao]{Feng Shao}
\address[F. Shao]{School of Mathematical Sciences, Peking University, Beijing 100871,  China}
\email{fshao@stu.pku.edu.cn}

\author[D. Wei]{Dongyi Wei}
\address[D. Wei]{
	School of Mathematical Sciences, Peking University, Beijing 100871, China}
\email{jnwdyi@pku.edu.cn}

\author[Z. Zhang]{Zhifei Zhang}
\address[Z. Zhang]{
	School of Mathematical Sciences, Peking University, Beijing 100871, China}
\email{zfzhang@math.pku.edu.cn}

\date{\today}

\begin{abstract}
 In  this paper, we solve Lions' open problem: {\it the uniqueness of weak solutions for the 2-D inhomogeneous Navier-Stokes equations (INS)}. We first prove the global existence of  weak solutions to 2-D (INS) with bounded initial density and initial velocity in $L^2(\mathbb R^2)$. Moreover, if the initial density is bounded away from zero, then our weak solution equals to Lions' weak solution, which in particular implies the uniqueness of Lions' weak solution. We also extend a celebrated result by Fujita and Kato on the 3-D incompressible Navier-Stokes equations to 3-D (INS): {\it  the global well-posedness of 3-D (INS) with bounded initial density and initial velocity being small in $\dot H^{1/2}(\mathbb R^3)$}. 
 The proof of the uniqueness is based on a surprising finding that the estimate $t^{1/2}\nabla u\in L^2(0,T; L^\infty(\mathbb R^d))$ instead of $\nabla u\in L^1(0, T; L^\infty(\mathbb R^d))$ is enough to ensure the uniqueness of the solution.
 \end{abstract}

\maketitle

\section{Introduction}
In this paper, we consider the inhomogeneous Navier-Stokes equations in $ \mathbb{R}^{+}\times\R^d$ ($d=2,3$):
\begin{equation}\label{INS}
	\left\{
	\begin{array}{l}
		\partial_t\rho+u\cdot\nabla \rho=0,\\
		\rho(\partial_tu+u\cdot\nabla u)-\Delta u+\nabla P=0,\\
		\dive u = 0,\\
		(\rho, u)|_{t=0} =(\rho_{0}, u_0),
	\end{array}
	\right.
\end{equation}
where $\rho,~u$ stand for the density and velocity of the fluid respectively, and $P$ is a scalar pressure function.  This system is known as a model for the evolution of a multi-phase flow consisting of several immiscible, incompressible fluids with different densities. We refer to \cite{Lions_vol1} for a detailed derivation of \eqref{INS}.

Lady\v{z}enskaja and Solonnikov \cite{LS} first addressed the question of unique solvability of \eqref{INS} in a bounded domain $\Omega$. Under the assumption that $u_0\in W^{2-\frac{2}{p},p}(\Omega)(p>d)$ is divergence free and vanishes on $\pa\Omega$ and  $\rho_0\in C^1(\Omega)$ is bounded away from zero, they proved the global well-posedness in dimension $d=2$, and local well-posedness in dimension $d=3$. To obtain well-posedness for less regular initial data or global results, the following three major features of \eqref{INS} are very crucial:
\begin{itemize}
    \item the incompressible condition $\dive u=0$ implies
    \[\operatorname{meas}\{x\in\R^d: \alpha\leq\rho(t,x)\leq\beta\} \text{ is independent of }t\geq0,\quad\forall\ 0\leq\alpha\leq\beta,\]
    and in particular  $\|\rho(t)\|_{L^\infty}=\|\rho_0\|_{L^\infty}$;
    \item the conservation of energy
    \begin{equation}\label{Eq.Energy}
        \frac12\int_{\R^d}\rho(t,x)|u(t,x)|^2\,\dx+\int_0^t\|\nabla u(s,\cdot)\|_{L^2(\R^d)}^2\,\ds=\frac12\int_{\R^d}\rho_0(x)|u_0(x)|^2\,\dx;
    \end{equation}
    
    \item the scaling invariance property which states that if $(\rho,u,P)$ is a solution of \eqref{INS} on $[0, T]\times\R^d$, then for all $\la>0$, the triplet $(\rho_\la, u_\la, P_\la)$ defined by
    \begin{equation}\label{Eq.scaling}
        (\rho_\la, u_\la, P_\la)(t,x):=(\rho(\la^2t,\la x), \la u(\la^2t,\la x), \la^2P(\la^2t,\la x))
    \end{equation}
    solves \eqref{INS} on $[0, T/\la^2]\times\R^d$.
\end{itemize}

A large number of works have been devoted to proving the well-posedness of \eqref{INS} in the so-called {\it critical functional framework}, which is to say, in functional spaces with scaling invariant norms. 

When the density $\rho$ is a constant (let's say, $\rho\equiv1$), in which case \eqref{INS} becomes the classical incompressible Navier-Stokes equations (NS). The pioneering work of Leray \cite{Leray1934} proved that if the initial velocity $u_0\in L^2(\R^3)$ with $\text{div} u_0=0$, then there exists a global weak (turbulent) solution to (NS) satisfying \eqref{Eq.Energy} with an inequality. Later on, this result was extended to any domain of $\R^d$ with $d=2,3$, see \cite{CDGG}. In dimension $d=2$, the proof of uniqueness is contained in Leray \cite{Leray1933, Leray1934, Leray1934JMPA}, Lady\v{z}enskaja \cite{Ladyzhenskaya}, and Lions and Prodi \cite{LP}. However, the uniqueness of Leray's weak solution  in dimension $d=3$ remains a longstanding open problem. See \cite{ABC2022, BV2019, JS2015} for recent breakthrough on non-uniqueness of weak solutions.

 We first recall the celebrated result by Fujita and Kato \cite{Fujita-Kao1964}, 
which proves the global solvability of the 3-D incompressible Navier-Stokes equations in a critical framework.

\begin{Theorem}[\cite{Fujita-Kao1964}]\label{Thm.Fujita-Kato}
    Let $d=3$. Given a divergence-free vector field $u_0\in \dot H^{1/2}(\R^3)$ with $\|u_0\|_{\dot H^{1/2}}$ small enough, then (NS) has a unique global solution $$u\in C([0, +\infty);\dot H^{1/2}(\R^3))\cap L^4(\R^+;\dot H^1(\R^3))\cap L^2(\R^+;\dot H^{3/2}(\R^3)).$$
\end{Theorem}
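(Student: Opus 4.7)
The plan is to implement the classical mild-solution strategy. I would recast (NS) as the Duhamel integral equation
\begin{equation*}
u(t)=e^{t\Delta}u_0-\int_0^t e^{(t-s)\Delta}\mathbb{P}\dive(u\otimes u)(s)\,\ds=e^{t\Delta}u_0+B(u,u)(t),
\end{equation*}
with $\mathbb{P}$ the Leray projector onto divergence-free fields, and solve for $u$ by Banach fixed point in the scaling-invariant resolution space
\begin{equation*}
E:=C([0,\infty);\dot H^{1/2}(\R^3))\cap L^4(\R^+;\dot H^1(\R^3))\cap L^2(\R^+;\dot H^{3/2}(\R^3)),
\end{equation*}
normed by the sum of the three component norms. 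Each piece is invariant under $u_\la(t,x)=\la u(\la^2 t,\la x)$, and $\dot H^{1/2}(\R^3)$ is exactly the Sobolev space of critical scaling for 3-D (NS).

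The two pillars of the argument are a linear bound and a bilinear bound. For the linear estimate $\|e^{t\Delta}u_0\|_E\leq C_0\|u_0\|_{\dot H^{1/2}}$, Plancherel and the elementary identity $\int_0^\infty e^{-2t|\xi|^2}|\xi|^3\,\dt=|\xi|/2$ settle the $L^2_t\dot H^{3/2}$ piece; the $L^\infty_t\dot H^{1/2}$ piece is trivial and the $L^4_t\dot H^1$ piece follows by interpolation between the first two. For the bilinear estimate $\|B(u,v)\|_E\leq C_1\|u\|_E\|v\|_E$, I would appeal to maximal $L^2_t$-regularity for the heat equation to reduce matters to
\begin{equation*}
\|B(u,v)\|_E\lesssim\|u\otimes v\|_{L^2(\R^+;\dot H^{1/2}(\R^3))},
\end{equation*}
in which one derivative is absorbed by the smoothing of $e^{t\Delta}$ and another by $\dive$. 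Then the chain $\|fg\|_{\dot H^{1/2}(\R^3)}\lesssim\|fg\|_{L^3(\R^3)}\lesssim\|f\|_{L^6}\|g\|_{L^6}\lesssim\|f\|_{\dot H^1}\|g\|_{\dot H^1}$ (using $\dot H^{1/2}\hookrightarrow L^3$ and $\dot H^1\hookrightarrow L^6$ in $\R^3$) together with Cauchy--Schwarz in time yields $\|u\otimes v\|_{L^2_t\dot H^{1/2}}\lesssim\|u\|_{L^4_t\dot H^1}\|v\|_{L^4_t\dot H^1}\leq\|u\|_E\|v\|_E$, closing the estimate.

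With the two bounds in place, $\Phi(u):=e^{t\Delta}u_0+B(u,u)$ is a contraction on a small closed ball of $E$ once $\|u_0\|_{\dot H^{1/2}}$ is sufficiently small relative to $C_0$ and $C_1$, producing a unique global fixed point; uniqueness in $E$ at this smallness threshold is built into the same inequality. The main obstacle is the bilinear estimate: since $\mathbb{P}\dive(u\otimes u)$ costs a full derivative, the smoothing strength of the heat semigroup in a critical mixed-norm space must be fully exploited, and the Sobolev and Lebesgue exponents must balance on the nose---this dimension-specific arithmetic is precisely what singles out $\dot H^{1/2}$ and $d=3$. Everything else (Plancherel, Sobolev embedding, interpolation, Picard) is off-the-shelf packaging.
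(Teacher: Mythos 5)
The paper does not actually prove this statement; it is quoted as the classical Fujita--Kato theorem, so the only comparison available is with the standard argument. Your scheme is exactly that standard route: Duhamel plus Picard iteration in the critical space $E$, the linear heat estimates you state are correct (the computation $\int_0^\infty e^{-2t|\xi|^2}|\xi|^3\,\dt=|\xi|/2$ does give the $L^2_t\dot H^{3/2}$ bound, and interpolation gives $L^4_t\dot H^1$), and the reduction of the Duhamel term to $\|u\otimes v\|_{L^2(\R^+;\dot H^{1/2})}$ via the smoothing/maximal regularity of the heat flow with forcing in $L^2_t\dot H^{-1/2}$ is legitimate (a plain energy estimate at the $\dot H^{1/2}$ level already suffices).

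There is, however, a genuine error in the one step that carries all the weight: the chain $\|fg\|_{\dot H^{1/2}(\R^3)}\lesssim\|fg\|_{L^3(\R^3)}$ uses the Sobolev embedding $\dot H^{1/2}(\R^3)\hookrightarrow L^3(\R^3)$ in the wrong direction. An $L^3$ bound does not control half a derivative (highly oscillatory functions are bounded in $L^3$ but unbounded in $\dot H^{1/2}$), so the product estimate as written fails. The estimate you actually need, $\|fg\|_{\dot H^{1/2}}\lesssim\|f\|_{\dot H^1}\|g\|_{\dot H^1}$, is true, but it must be proved by a genuine product argument: either a paraproduct/product law, or more elementarily through the embedding $\dot W^{1,3/2}(\R^3)\hookrightarrow \dot H^{1/2}(\R^3)$ combined with Leibniz and H\"older, namely $\|\nabla(fg)\|_{L^{3/2}}\leq\|\nabla f\|_{L^2}\|g\|_{L^6}+\|f\|_{L^6}\|\nabla g\|_{L^2}\lesssim\|f\|_{\dot H^1}\|g\|_{\dot H^1}$ (this $\dot W^{1,3/2}$--$\dot W^{-1,3}$ mechanism is exactly the one the paper exploits in its uniqueness proof). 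With that substitution your bilinear estimate, and hence the contraction and existence, close as claimed. A minor further point: uniqueness in the whole class $E$, not just in the small ball where the fixed point lives, needs the standard extra remark that $\|u\|_{L^4(0,T;\dot H^1)}$ is small for small $T$ for any element of $E$, followed by a continuation argument; as stated, your ``built into the same inequality'' glosses over this.
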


Recently, several works are devoted to the extension of Leray's weak solution to 2-D (INS) and the extension of Fujita-Kato's solution to 3-D (INS), which are both in the critical functional framework.

In this framework for (INS), Danchin \cite{Danchin2003} and Abidi \cite{Abidi2007} proved that if $\rho_0$ is close to a positive constant in $\dot B^{d/p}_{p,1}(\R^d)$ and $u_0$ is sufficiently small in $\dot B^{-1+d/p}_{p,1}(\R^d)$, then there is a global solution to \eqref{INS} with the initial data $(\rho_0, u_0)$ for all $p\in(1,2d)$ and the uniqueness holds for $p\in(1,d]$. The existence result has been extended to general Besov spaces in \cite{Abidi-Paicu2007, Danchin_Mucha2012, Paicu_zhang2012} even without the size restriction for the density, see \cite{Abidi_Gui2021, Abidi_Gui_Zhang2012, Abidi_Gui_Zhang2013, Abidi_Gui_Zhang2023}, and  in \cite{Danchin_Mucha2012}, Danchin and Mucha proved the existence and uniqueness for $p\in[1,2d)$, where $\rho_0$ is close to a positive constant in a multiplier space $\mathcal M(\dot B^{-1+d/p}_{p,1})$. 

In all these aforementioned results, the density has to be at least continuous or near a positive constant, which excludes some physical cases when the density  is discontinuous and has large variations along a hypersurface (but still bounded), for example, $\rho=a\mathbf{1}_D+b$ for some $a>b\geq0$ and some bounded open set $D\subset\R^d$. 
Now we consider the general case when $0\leq \rho_0\in L^\infty$. In 1974, Kazhikhov \cite{K} proved the existence of global weak solutions if $\rho_0$ is bounded away from vacuum (i.e., $\inf \rho_0>0$) and $u_0\in H^1(\R^d)$. The no vacuum assumption was later removed by Simon \cite{S}. Then Lions \cite{Lions_vol1} extended the previous results to the case of a density dependent viscosity, where the density equation of \eqref{INS} is satisfied in a renormalized meaning. (We remark that the uniqueness of Lions' weak solution, even in dimension $d=2$, in which case Leray's weak solution for the classical (NS) is unique, is a longstanding open problem, and we will come back to this topic later in Remark \ref{Rmk.uniqueness}.) While with $u_0\in H^2(\R^d)$, Danchin and Mucha \cite{DM3} proved that the system \eqref{INS} has a unique local in time solution. Paicu, Zhang and the fourth author \cite{Paicu_Zhang_Zhang_CPDE}  proved the global well-posedness of \eqref{INS} with $\rho_0\in L^{\infty}(\R^2)$ bounded away from zero and  $u_0\in H^s(\R^2)$ for any $s>0$, and for $d=3$ they require $u_0\in H^1(\R^3)$ with $\|u_0\|_{L^2}\|\na u_0\|_{L^2}$ small. The 3-D result in \cite{Paicu_Zhang_Zhang_CPDE} was further improved to $u_0\in H^s(\R^3)$ for any $s>1/2$ in \cite{CZZ}. We also mention that in \cite{Hao_Shao_Wei_Zhang}, the authors solved the so-called {\it density patch problem} in the 2-D case, which states that if $\rho_0=\mathbf 1_D$ for a $C^{1,\alpha}$ domain $D\subset \R^2$ then the regularity of the boundary is preserved for Lions' weak solution, by showing the weak-strong uniqueness. See also \cite{DM, DZ, GG, LZ-ARMA, LZ, PT} for related results. In these works, (except for Lions' 2-D weak solutions and Leray's 2-D weak solutions,) we note that the norms of the initial velocity are not critical in the sense that the norms are not invariant under the scaling transformation \eqref{Eq.scaling}. 
\if0
the initial data $(\rho_0, u_0)$ satisfies
\[\rho_0-1\in\dot B^{d/p}_{p,1}(\R^d), \quad u_0\in \dot B^{-1+d/p}_{p,1}(\R^d)\with \dive u_0=0\]
and 
\[\|\rho_0-1\|_{\dot B^{d/p}_{p,1}}+\|u_0\|_{\dot B^{-1+d/p}_{p,1}}<\varepsilon_0\]
for some small enough constant $\varepsilon_0>0$, then for any $p\in(1,+\infty)$ there hold
\begin{itemize}
    \item existence of global solution $(\rho, u,\nabla P)$ with $\rho-1\in C([0, +\infty); \dot B^{d/p}_{p,1}(\R^d))$, 
    $$u\in C([0, +\infty); \dot B^{-1+d/p}_{p,1}(\R^d))\andf \pa_tu, \nabla^2,\nabla P\in L^1(\R^+; \dot B^{-1+d/p}_{p,1}(\R^d));$$
    \item uniqueness in the above class if additionally $p\in(1,2d]$.
\end{itemize}
\fi 

The first result where $\rho_0$ is merely bounded and $u_0$ lies in a critical space was obtained by Zhang \cite{Zhang_Adv}, where he established the global existence of solutions to 3-D (INS) with initial density $0\leq\rho_0\in L^\infty(\R^3)$, bounded away from zero, and initial velocity $u_0$ sufficiently small in a critical Besov space $\dot B^{1/2}_{2,1}(\R^3)$. The uniqueness has been proved later by Danchin and Wang \cite{DW}. In \cite{DW}, Danchin and Wang proved global existence and uniqueness for $u_0\in \dot B^{-1+2/p}_{p,1}(\R^2)$ and $\|\rho_0-1\|_{L^\infty(\R^2)}$ small, where $p\in(1,2)$; in the 3-D case, they proved global existence if $u_0$ is small in $\dot B^{-1+3/p}_{p,1}(\R^3)$ for $p\in(1,3)$ and $\|\rho_0-1\|_{L^\infty(\R^3)}$ is small, and they obtained the uniqueness for $p\in(1, 2]$ and also for $p\in(2,3)$ if additionally $u_0\in L^2(\R^3)$. See also \cite{Qian_Chen_Zhang2023} for another extension of Zhang's result to asymmetric fluids. In these results, the summability index 1 in the Besov spaces insures that the gradient of the velocity is in $L^1(0,T; L^\infty)$, which is a key ingredient in the proof of uniqueness. 
In a very recent paper by Danchin \cite{Danchin2024}, for the 2-D case he proved the global existence of solutions to \eqref{INS} if $\rho_0\in L^\infty(\R^2)$ is bounded away from zero and $u_0\in L^2(\R^2)$, and he obtained the uniqueness if $u_0$ lies in a suitable subspace of $L^2(\R^2)$ resembling $\dot B^0_{2,1}(\R^2)$. We emphasize that, all these works require $\rho_0$ to be bounded away from zero for their existence, and the uniqueness holds only for $u_0$ belonging to a subspace of $\dot H^{d/2-1}(\R^d)$ or some $L^p$ type Besov space such that $\nabla u\in L^1(0,T; L^\infty)$.

\smallskip

Now we state our main results of this paper. \smallskip

In the 2-D case, we prove the global existence of weak solution for $0\leq\rho_0\in L^\infty(\R^2)$ and $u_0\in L^2(\R^2)$, and we also obtain the uniqueness if $\rho_0$ is also bounded away from zero. 

\begin{theorem}\label{Thm.existence_2D}
    Let $d=2$. Given the initial data $(\rho_0,u_0)$ satisfying $0\leq \rho_0(x)\leq \|\rho_0\|_{L^{\infty}}$ and $\rho_0\not\equiv 0,~ u_0\in L^2(\R^2),
	~\dive u_0=0$,  the system \eqref{INS} has a global weak solution $(\rho,u,\nabla P)$ with $0\leq \rho(t,x)\leq \|\rho_0\|_{L^{\infty}}$, $\sqrt\rho u\in C([0, +\infty); L^2(\R^2))$ and the following properties
    \begin{itemize}
        \item $\sqrt\rho u\in L^\infty(\R^+; L^2(\R^2))$ and $\nabla u\in L^2(\R^+; L^2(\R^2))$;
        \item $t^{1/2}\nabla u\in L^\infty(\R^+; L^2(\R^2))$ and $t^{1/2}\sqrt\rho \dot u\in L^2(\R^+; L^2(\R^2))$;
        \item $t\sqrt\rho\dot u\in L^\infty(\R^+; L^2(\R^2))$ and $t\nabla\dot u\in L^2(\R^+; L^2(\R^2))$, {where $\dot u\eqdef u_t+u\cdot\nabla u$};
        \item $t^{1/2}\nabla u\in L^2(\R^+; L^\infty(\R^2))$.
    \end{itemize}
    Moreover, if $\rho_0$ is bounded away from zero, then the  solution is unique.
\end{theorem}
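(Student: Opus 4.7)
The plan is to prove existence via an approximation-compactness argument and uniqueness (when $\inf\rho_0>0$) via a time-weighted energy estimate that substitutes the bound $t^{1/2}\nabla u\in L^2(0,T;L^\infty)$ for the classical requirement $\nabla u\in L^1(0,T;L^\infty)$.

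For existence, I would regularize the data by $\rho_0^n=j_{1/n}*\rho_0+1/n$ (smooth and bounded below by $1/n$) and $u_0^n=j_{1/n}*u_0$, and invoke the Lady\v{z}enskaja-Solonnikov/Kazhikhov theory to produce smooth global solutions $(\rho_n,u_n,P_n)$ of \eqref{INS}. Then I derive the four bulleted estimates uniformly in $n$: (i) the energy identity \eqref{Eq.Energy} yields $\sqrt{\rho_n}u_n\in L^\infty L^2$ and $\nabla u_n\in L^2L^2$; (ii) testing the momentum equation against $\dot u_n=\partial_tu_n+u_n\cdot\nabla u_n$ (the pressure drops out by $\dive\dot u_n=-\nabla u_n{:}(\nabla u_n)^{T}$ after integration by parts) and multiplying by $t$ produces $t^{1/2}\nabla u_n\in L^\infty L^2$ and $t^{1/2}\sqrt{\rho_n}\dot u_n\in L^2L^2$; (iii) applying the material derivative to the momentum equation and testing against $t^2\dot u_n$ yields $t\sqrt{\rho_n}\dot u_n\in L^\infty L^2$ and $t\nabla\dot u_n\in L^2L^2$; (iv) the critical bound $t^{1/2}\nabla u_n\in L^2L^\infty$ follows from the 2-D logarithmic Sobolev (Brezis-Gallouet type) inequality applied to $\nabla u_n$, combined with the Stokes regularity estimate $\|\nabla^2 u_n\|_{L^2}\lesssim\|\sqrt{\rho_n}\dot u_n\|_{L^2}+\text{l.o.t.}$, followed by integration in $t$. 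Passing to the limit with Aubin-Lions compactness for $u_n$ and DiPerna-Lions renormalization for the transport equation for $\rho_n$ produces the claimed weak solution.

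For uniqueness, under $\rho_0\geq c_0>0$ (so each $\rho_i\geq c_0$), let $(\rho_i,u_i,P_i)$, $i=1,2$, be two solutions with common initial data and set $\delta u=u_1-u_2$, $\delta\rho=\rho_1-\rho_2$, $\delta P=P_1-P_2$. The differences satisfy
\[\rho_1(\partial_t\delta u+u_1\cdot\nabla\delta u)-\Delta\delta u+\nabla\delta P=-\rho_1\delta u\cdot\nabla u_2-\delta\rho\,\dot u_2,\qquad\partial_t\delta\rho+u_1\cdot\nabla\delta\rho=-\dive(\rho_2\delta u),\]
together with $\dive\delta u=0$. Testing the momentum equation against $\delta u$ and using $\partial_t\rho_1+u_1\cdot\nabla\rho_1=0$ to absorb the convective term leaves an energy identity for $E(t)\eqdef\|\sqrt{\rho_1}\delta u(t)\|_{L^2}^2$. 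The velocity forcing is controlled by 2-D Ladyzhenskaya $\|\delta u\|_{L^4}^2\lesssim\|\delta u\|_{L^2}\|\nabla\delta u\|_{L^2}$ and Young's inequality, absorbing half of the dissipation and leaving a Gronwall coefficient $\|\nabla u_2\|_{L^2}^2\in L^1_t$. The density forcing is the delicate term: estimate $\delta\rho$ in $\dot H^{-1}$ by solving $-\Delta\Phi=\delta\rho$ and using the transport equation to propagate $\|\delta\rho(t)\|_{\dot H^{-1}}\lesssim\int_0^t\|\sqrt{\rho_1}\delta u(s)\|_{L^2}\,\ds$; coupling with $\dot u_2$ (which is only in $L^2L^2$ with the weight $t^{1/2}$) forces $t^{1/2}\nabla u_2\in L^2L^\infty$ to enter the Gronwall coefficient, after which one concludes $E\equiv 0$ from $E(0)=0$.

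The principal obstacle is precisely this last step: closing the Gronwall estimate when only $t^{1/2}\nabla u_2\in L^2(0,T;L^\infty)$ is available in place of the usual $\nabla u_2\in L^1(0,T;L^\infty)$, since a naive Cauchy-Schwarz conversion between these two norms produces a logarithmic divergence at $t=0$. The surprising resolution advertised in the abstract is to arrange the Cauchy-Schwarz decomposition together with the time-weights in the energy so that only the integrable quantity $\int_0^t s\|\nabla u_2(s)\|_{L^\infty}^2\,\ds$ enters the Gronwall exponent, at the cost of a singular factor $s^{-1}$ that is reabsorbed by the parabolic dissipation $\nabla\delta u\in L^2L^2$ together with the vanishing initial datum $E(0)=0$. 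Verifying this arrangement rigorously, in tandem with the $\dot H^{-1}$ propagation estimate for $\delta\rho$, constitutes the technical core of the argument.
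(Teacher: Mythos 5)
Your existence scheme (regularize the data away from vacuum, prove the four bulleted bounds uniformly in the regularization parameter, pass to the limit by local compactness) is essentially the route taken in Sections 2 and 4 of the paper, and your minor deviations there are plausible: the paper obtains the fourth bullet not from a Brezis--Gallouet inequality but from $\|\nabla u\|_{L^\infty}\le C\|\nabla u\|_{L^2}^{1/2}\|\nabla\dot u\|_{L^2}^{1/2}+C\|\sqrt\rho\dot u\|_{L^2}$ together with Lemma \ref{Lem.high_order_est_2D} (see Lemma \ref{Lem.L^2_L^infty_2D}), and it handles the limit in $\rho^\varepsilon u^\varepsilon\otimes u^\varepsilon$ by splitting off a small time interval near $t=0$ and using the pointwise bound of Lemma \ref{u_L^infty} plus local $H^1$ compactness, rather than Aubin--Lions/DiPerna--Lions. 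The genuine gap is in the uniqueness part, which is the heart of the theorem.

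Two concrete problems. First, the propagation estimate you assert, $\|\delta\rho(t)\|_{\dot H^{-1}}\lesssim\int_0^t\|\sqrt{\rho_1}\delta u(s)\|_{L^2}\,\mathrm ds$, is not available: transporting a negative Sobolev norm along the flow of $u_1$ costs a distortion factor of size $\exp\bigl(\int_s^t\|\nabla u_1\|_{L^\infty}\,\mathrm d\tau\bigr)$, and with only $t^{1/2}\nabla u_1\in L^2(0,T;L^{\infty})$ this factor is $\mathrm e^{C|\ln(t/s)|^{1/2}}$, unbounded as $s\to0$, so it cannot simply be dropped; dealing with it is precisely the paper's new idea (solve the transport equation backward from time $t$ for the test function, Lemma \ref{Lem.transport}, and control the resulting kernel $t^{-1}\mathrm e^{C|\ln(t/s)|^{1/2}}$ via the Hardy-type Lemma \ref{Lem.Hardy}). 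Second, even granting an $\dot H^{-1}$ bound on $\delta\rho$, the pairing $\langle\delta\rho,\dot u_2\cdot\delta u\rangle$ cannot be closed in 2-D through $\dot H^{-1}$--$\dot H^{1}$ duality: $\|\nabla(\dot u_2\cdot\delta u)\|_{L^2}$ contains the term $\dot u_2\,\nabla\delta u$, which would require $\dot u_2\in L^\infty$, not available; the paper instead works with the $L^{4}$--$L^{4/3}$ duality, invokes the hypotheses $t\dot{\bar u}\in L^\infty(0,T;L^2)$ and $t\nabla\dot{\bar u}\in L^2(0,T;L^2)$ of Theorem \ref{Thm.uniqueness}, applies H\"older in time with exponents $4$ and $4/3$, and closes the argument not by Gr\"onwall but by absorbing $D(t)^2$ on a short interval on which $\|s\nabla\dot{\bar u}\|_{L^2(0,T_1;L^2)}^{1/2}$ is small, followed by a bootstrap in time. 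Your closing paragraph concedes that the arrangement of weights ``constitutes the technical core'' and leaves it unverified; moreover the mechanism you sketch (only $\int_0^t s\|\nabla u_2(s)\|_{L^\infty}^2\,\mathrm ds$ entering a Gr\"onwall exponent, with an $s^{-1}$ singularity absorbed by the dissipation) is not the one that makes the estimate close and is not substantiated. Hence the uniqueness assertion of the theorem is not proved by the proposal.
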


In the 3-D case, we prove the global existence and uniqueness of Fujita-Kato's solution to the inhomogeneous  Navier-Stokes equations. Here we only require $0\leq \rho_0\in L^\infty(\R^3)$ (even for the uniqueness)  and $u_0\in \dot H^{1/2}(\R^3)$ small.
\begin{theorem}\label{Thm.existence}
	Let $d=3$. Given the initial data $(\rho_0,u_0)$ satisfying $0\leq \rho_0(x)\leq \|\rho_0\|_{L^{\infty}}$ and $\rho_0\not\equiv 0,~ u_0\in \dot H^{1/2}(\R^3),
	~\dive u_0=0$, there exists $\varepsilon_0>0$ depending only on $\|\rho_0\|_{L^{\infty}}$ such that if $\|u_0\|_{\dot H^{1/2}(\R^3)}< \varepsilon_0$, then the system \eqref{INS} has a unique global weak solution $(\rho,u,\nabla P)$ with $0\leq \rho(t,x)\leq \|\rho_0\|_{L^{\infty}}$, $u\in L^4(\R^+; \dot H^1(\R^3))$, $\sqrt\rho u\in C([0, +\infty); L^3(\R^3))$ and the following properties
    \begin{itemize}
        \item $\sqrt\rho u\in L^\infty(\R^+; L^3(\R^3))$, $t^{-1/4}\nabla u\in L^2(\R^+; L^2(\R^3))$, and $t^{1/4}\nabla u\in L^\infty(\R^+; L^2(\R^3))$;
        \item $t^{1/4}\sqrt\rho\dot u,\ t^{1/4}\nabla^2u,\ t^{3/4}\nabla\dot u\in L^2(\R^+; L^2(\R^3))$, and $t^{3/4}\sqrt\rho\dot u\in L^\infty(\R^+; L^2(\R^3))$, where $\dot u\eqdef u_t+u\cdot\nabla u$;
        \item $t^{1/4}\sqrt\rho u_t,\ t^{1/4}\sqrt{\rho_0} u_t,\ t^{3/4}\nabla u_t\in L^2(\R^+; L^2(\R^3))$ and $t^{3/4}\sqrt\rho u_t\in L^\infty(\R^+; L^2(\R^3))$;
        \item $t^{1/2}\nabla u\in L^2(\R^+; L^\infty(\R^3))$.
    \end{itemize}
\end{theorem}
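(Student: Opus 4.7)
The plan is to construct the solution by approximation and compactness. I would regularize the initial data by setting $\rho_0^n=\eta_{1/n}\ast\rho_0+1/n$, which is smooth and satisfies $1/n\leq\rho_0^n\leq\|\rho_0\|_{L^\infty}+1/n$, and $u_0^n=\eta_{1/n}\ast u_0$ (divergence-free). Known local well-posedness theory for \eqref{INS} with smooth data and strictly positive density then yields smooth local solutions $(\rho^n,u^n,\nabla P^n)$. The proof reduces to (i) obtaining a priori bounds uniform in $n$ that extend these solutions globally, (ii) passing to the limit by compactness, and (iii) proving uniqueness.

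\textbf{Critical estimate and regularity cascade.} The heart of the existence argument is the critical-scaling bound
\begin{equation*}
	\|\sqrt{\rho^n}\,u^n\|_{L^\infty(\R^+;L^3)}+\|\nabla u^n\|_{L^4(\R^+;L^2)}\leq C(\|\rho_0\|_{L^\infty})\,\|u_0\|_{\dot H^{1/2}(\R^3)},
\end{equation*}
valid under the smallness hypothesis. I would establish this via a continuity/bootstrap argument on $X(T)=\|\sqrt{\rho^n}u^n\|_{L^\infty_T L^3}+\|\nabla u^n\|_{L^4_T L^2}$, viewing the momentum equation as a perturbed Stokes system and using the Sobolev embedding $\dot H^{1/2}(\R^3)\hookrightarrow L^3(\R^3)$, the Gagliardo--Nirenberg inequality $\|u\|_{L^6}\lesssim\|\nabla u\|_{L^2}$, H\"older in space--time, and the smallness of $X$ itself to absorb the nonlinearity; crucially, $\|\rho_0\|_{L^\infty}$ enters only multiplicatively, which is the role of the $\varepsilon_0=\varepsilon_0(\|\rho_0\|_{L^\infty})$ in the statement. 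From the critical bound the remaining weighted estimates fall out in a cascade \`a la Paicu--Zhang--Zhang: multiplying the momentum equation by $\dot u$ with weight $t^{1/2}$ yields $t^{1/4}\nabla u\in L^\infty L^2$, $t^{1/4}\sqrt\rho\dot u,\ t^{1/4}\nabla^2 u\in L^2 L^2$; differentiating in time and testing against $\dot u$ with weight $t^{3/2}$ yields $t^{3/4}\sqrt\rho\dot u\in L^\infty L^2$ and $t^{3/4}\nabla\dot u\in L^2 L^2$. The bounds involving $u_t$ follow by writing $u_t=\dot u-u\cdot\nabla u$ and invoking Stokes maximal regularity, while the decisive $t^{1/2}\nabla u\in L^2 L^\infty$ estimate comes from $\|\nabla u\|_{L^\infty}\lesssim\|\nabla u\|_{L^2}^{1/2}\|\nabla^2 u\|_{L^2}^{1/2}$ and the two $t^{1/4}$ bounds.

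\textbf{Passage to the limit.} Weak compactness together with an Aubin--Lions argument on $\rho^n u^n$ (whose time derivative is controlled through $\sqrt\rho\dot u$) gives strong convergence of $\sqrt{\rho^n}u^n$ in $L^2_{\mathrm{loc}}$, which is enough to pass to the limit in the nonlinear terms of the momentum equation. The pressure is recovered via a Bogovski\u\i-type argument, and the density equation is passed to the limit through the DiPerna--Lions renormalization theory since the uniform critical bound delivers $u\in L^1_{\mathrm{loc}}(0,T;W^{1,1}_{\mathrm{loc}})$.

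\textbf{Uniqueness and main obstacle.} Given two solutions $(\rho_i,u_i,\nabla P_i)$ with the same data, set $\delta u=u_1-u_2$, $\delta\rho=\rho_1-\rho_2$. Testing the momentum difference against $\delta u$ formally yields
\begin{equation*}
	\frac{d}{dt}\|\sqrt{\rho_1}\,\delta u\|_{L^2}^2+\|\nabla\delta u\|_{L^2}^2\leq C\|\nabla u_2\|_{L^\infty}\|\sqrt{\rho_1}\,\delta u\|_{L^2}^2+E(\delta\rho),
\end{equation*}
where $E(\delta\rho)$ is controlled by a dual transport estimate for $\delta\rho$ against the Lagrangian flow of $u_2$. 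The \emph{principal obstacle}, and the stated innovation of this paper, is that $\int_0^T\|\nabla u_2\|_{L^\infty}\,ds$ need not be finite at $t=0$ (only $\int_0^T s\|\nabla u_2\|_{L^\infty}^2\,ds<\infty$ is guaranteed), so the classical Osgood--Gronwall argument fails, and uniqueness must be established \emph{without} the lower bound on $\rho_0$ that earlier works relied on. I would overcome this by applying Gronwall to a weighted quantity $t^{\alpha}\|\sqrt{\rho_1}\,\delta u\|_{L^2}^2$ with a carefully chosen $\alpha>0$, using the splitting $\|\nabla u_2\|_{L^\infty}=t^{-1/2}\cdot(t^{1/2}\|\nabla u_2\|_{L^\infty})$ so that the first factor is absorbed into the derivative of the weight and the second factor contributes a genuinely $L^1_t$-integrable Gronwall term via Cauchy--Schwarz and the $L^2_t L^\infty_x$ bound on $\sqrt t\,\nabla u_2$. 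Engineering the corresponding transport estimate for $\delta\rho$ to be compatible with this weighted Gronwall, while allowing $\rho_0$ to vanish on a set of positive measure, is the technical heart of the proof.
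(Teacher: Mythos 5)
Your overall scaffolding (regularize the data, derive uniform critical and weighted estimates, compactness, then a duality-type uniqueness) matches the paper, but the two steps you treat as routine are exactly where the paper's real work lies, and as sketched they do not go through. First, the critical bound $\|\sqrt\rho u\|_{L^\infty L^3}+\|\nabla u\|_{L^4L^2}\lesssim\|u_0\|_{\dot H^{1/2}}$ cannot be obtained by "a continuity/bootstrap argument viewing the momentum equation as a perturbed Stokes system": the density is merely bounded and may vanish, so there is no semigroup or maximal-regularity theory for that system, an $L^3$-energy estimate runs into the pressure term with a degenerate weight, and since $u_0\in\dot H^{1/2}$ need not lie in $L^2$ even the basic energy estimate for $u$ itself is unavailable. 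The paper (following Zhang) instead decomposes $u_0=\sum_j\Delta_j u_0$, solves for each block the \emph{linear} system with the actual coefficients $(\rho,u)$, proves purely $L^2$-based estimates for each block with constants depending only on $\|\rho_0\|_{L^\infty}$ (including a nontrivial $t^{-\alpha}$-weighted interpolation bound), and resums via the $\ell^2$ structure of $\dot H^{1/2}$; this decomposition is the missing idea in your existence argument. Relatedly, your route to $t^{1/2}\nabla u\in L^2(\R^+;L^\infty)$ uses the inequality $\|\nabla u\|_{L^\infty}\lesssim\|\nabla u\|_{L^2}^{1/2}\|\nabla^2u\|_{L^2}^{1/2}$, which is scaling-inconsistent (hence false) in $\R^3$, and the bound does not follow from "the two $t^{1/4}$ bounds" alone: one needs $\|\nabla u\|_{L^\infty}\lesssim\|\nabla^2u\|_{L^2}^{1/2}\|\nabla^2u\|_{L^6}^{1/2}$ together with the $L^6$ Stokes estimate $\|\nabla^2u\|_{L^6}\lesssim\|\nabla\dot u\|_{L^2}$ and the $t^{3/4}\nabla\dot u\in L^2L^2$ estimate.

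For uniqueness you correctly identify the obstacle ($\nabla\bar u\notin L^1_tL^\infty_x$), but your proposed fix does not work as stated: from $t^{1/2}\nabla\bar u\in L^2_tL^\infty_x$ alone, Cauchy--Schwarz against $t^{-1/2}\notin L^2(0,T)$ gives nothing integrable, and multiplying $\|\sqrt{\rho_1}\delta u\|_{L^2}^2$ by $t^\alpha$ cannot absorb a non-integrable Gronwall coefficient. The paper's mechanism is different in both places where $\nabla\bar u$ appears: in the energy identity for $\delta u$ the term $\int\rho(\delta u\cdot\nabla\bar u)\cdot\delta u$ is estimated with $\|\nabla\bar u\|_{L^2}$ (via $L^3\times L^2\times L^6$ and Young), so $\|\nabla\bar u\|_{L^\infty}$ never enters the Gronwall factor; the $L^2_tL^\infty_x$ bound on $t^{1/2}\nabla\bar u$ is used only inside the backward transport equation for the dual test function, where it produces the sub-exponential flow growth $e^{C|\ln(t/s)|^{1/2}}$, which is integrable in $s$. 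The coupling is then closed through the weighted negative-norm quantity $X(t)=\sup_s s^{-3/4}\|\delta\rho(s)\|_{\dot W^{-1,3}}$ paired with $t^{3/2}\|\nabla\dot{\bar u}\|_{L^2}^2\in L^1_t$. Finally, your energy estimate for $\delta u$ is only formal: since $u_0\notin L^2$ in general, showing that $\sqrt\rho\,\delta u\in L^\infty_TL^2$ at all requires the $t^{1/4}\sqrt{\rho_0}u_t\in L^2L^2$ estimate (itself resting on a $\dot W^{-1,3}$ bound for $\rho(t)-\rho_0$) — a verification step your proposal omits.
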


\begin{remark}
    \begin{enumerate}
 \item Compared with Zhang's result \cite{Zhang_Adv}, we allow for vacuum density and extend the condition $u_0\in \dot B^{1/2}_{2,1}$ to $\dot B^{1/2}_{2,2}=\dot H^{1/2}$, which is exactly the functional space used by Fujita and Kato in Theorem \ref{Thm.Fujita-Kato}. Moreover, we also obtain the uniqueness  of  the solution even though here we can not get the $L^1(0, T; L^\infty(\R^3))$ estimate for $\nabla u$. Indeed, we only have $\|t^{1/2}\nabla u\|_{L^2(\R^+; L^\infty(\R^3))}\leq C\|u_0\|_{\dot H^{1/2}(\R^3)}$.
 
        \item If $\rho_0$ is bounded away from zero, then we have $u\in C([0, +\infty); \dot H^{1/2}(\R^3))$.
        \item In the proof of uniqueness, we only use the following properties of the solution: 
        \begin{align*}
            &\nabla u\in L^4(\R^+; L^2(\R^3)),\quad \sqrt\rho u\in L^\infty(\R^+; L^3(\R^3)), \quad t^{1/4}\nabla u\in L^\infty(\R^+; L^2(\R^3)),\\
            t^{3/4}&\nabla\dot u\in L^2(\R^+;L^2(\R^3)),\quad t^{1/4}\sqrt\rho u_t\in L^2(\R^+; L^2(\R^3)), \quad t^{3/4}\nabla u_t\in L^2(\R^+;L^2(\R^3)),
        \end{align*}
        and $t^{1/2}\nabla u\in L^2(\R^+; L^\infty(\R^3))$.
    \end{enumerate}
\end{remark}

The uniqueness parts of Theorem \ref{Thm.existence_2D} and Theorem \ref{Thm.existence} are consequences of the following much more general result.
\begin{theorem}\label{Thm.uniqueness}
Let $T>0$. Let $(\rho,u,\nabla P)$ and $(\bar\rho,\bar u, \nabla \bar P)$ be two solutions of \eqref{INS} on $[0,T]\times\R^d$ corresponding to the same initial data. Assume in addition that
\begin{itemize}
    \item $\sqrt{\rho}(\bar u-u)\in L^{\infty}(0,T;L^2(\R^d))$;
    \item $\nabla \bar u-\nabla u \in L^2(0,T;L^2(\R^d))$; 
    \item $t^{1/2}\nabla \bar u\in L^2(0,T;L^{\infty}(\R^d))$;
    \item Case $d=2$: $\rho_0$ is bounded away from zero and $$\nabla \bar u\in L^2(0,T;L^2(\R^2)), \quad t\dot{\bar u}\in L^{\infty}(0,T;L^2(\R^2)), \quad t\nabla \dot{\bar u}\in L^2(0,T;L^2(\R^2));$$
    \item Case $d=3$: $\nabla \bar u\in L^4(0,T;L^2(\R^3))$ and $t^{3/4}\nabla \dot{\bar u}\in L^2(0,T;L^2(\R^3))$, where $\dot{\bar{u}}:=\partial_t\bar{u}+\bar{u}\cdot\nabla \bar{u}$.
\end{itemize}
Then $(\rho,u,\nabla P)=(\bar\rho,\bar u, \nabla\bar P)$ on $[0,T]\times \R^d$.
\end{theorem}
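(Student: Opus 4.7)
The plan is to work with the differences $\delta u:=\bar u-u$, $\delta\rho:=\bar\rho-\rho$, $\delta P:=\bar P-P$, all vanishing at $t=0$, and to derive a coupled estimate for $\|\sqrt\rho\,\delta u\|_{L^2}$ together with a bound on $\delta\rho$ in a suitable weak norm, then close everything by a time-weighted Gronwall argument tuned to the hypothesis $t^{1/2}\nabla\bar u\in L^2L^\infty$.

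I would begin by subtracting the two momentum equations, choosing to write the convective derivative along $u$ so that the continuity equation $\partial_t\rho+u\cdot\nabla\rho=0$ produces the usual cancellation after testing against $\delta u$. A direct computation gives
\begin{equation*}
\rho(\partial_t\delta u+u\cdot\nabla\delta u)-\Delta\delta u+\nabla\delta P=-\rho\,\delta u\cdot\nabla\bar u-\delta\rho\,\dot{\bar u}.
\end{equation*}
The point of this particular form is that the linear coefficient on $\delta u$ is $\nabla\bar u$ and the forcing involves $\dot{\bar u}$, which are exactly the quantities for which the theorem supplies weighted integrability. Testing against $\delta u$ and using $\dive\delta u=0$ yields
\begin{equation*}
\tfrac12\tfrac{d}{dt}\|\sqrt\rho\,\delta u\|_{L^2}^2+\|\nabla\delta u\|_{L^2}^2\le\|\nabla\bar u\|_{L^\infty}\|\sqrt\rho\,\delta u\|_{L^2}^2+\bigl|\langle\delta\rho\,\dot{\bar u},\delta u\rangle\bigr|.
\end{equation*}
The first right-hand side term is a standard Gronwall driver, except that the hypothesis only furnishes $t^{1/2}\|\nabla\bar u\|_{L^\infty}\in L^2_t$ rather than $\|\nabla\bar u\|_{L^\infty}\in L^1_t$. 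Writing $\|\nabla\bar u\|_{L^\infty}=t^{-1/2}\cdot(t^{1/2}\|\nabla\bar u\|_{L^\infty})$ and using Cauchy-Schwarz will close this contribution provided $\|\sqrt\rho\,\delta u\|_{L^2}^2$ vanishes fast enough at $t=0$ to beat the $t^{-1/2}$ singularity.

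The main obstacle is the coupling term $\langle\delta\rho\,\dot{\bar u},\delta u\rangle$: $\delta\rho$ is only uniformly bounded in $L^\infty$ (not small), yet $\dot{\bar u}$ is singular at $t=0$ (only weighted integrability like $t\dot{\bar u}\in L^\infty L^2$, $t\nabla\dot{\bar u}\in L^2L^2$ in 2D, or the $t^{3/4}$ analogue in 3D, is available), so naive Cauchy-Schwarz fails to give a time-integrable bound. My plan is to trade the $L^\infty$-information on $\delta\rho$ for a negative-regularity bound that captures the initial vanishing $\delta\rho(0)=0$: the transport identity
\begin{equation*}
\partial_t\delta\rho+\bar u\cdot\nabla\delta\rho+\dive(\rho\,\delta u)=0,
\end{equation*}
tested against $(-\Delta)^{-1}\delta\rho$ in a suitable (perhaps weighted) $\dot H^{-1}$-type space in which the convective commutator with $\bar u$ is tame, should yield a Volterra estimate of the form $\|\delta\rho(t)\|_{\text{weak}}\lesssim\int_0^t\|\delta u(s)\|_{L^2}\,ds$. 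Coupling this with the duality $|\langle\delta\rho\,\dot{\bar u},\delta u\rangle|\le\|\delta\rho\|_{\text{weak}}\|\dot{\bar u}\,\delta u\|_{\text{dual}}$ and bounding the dual norm by the weighted estimates on $\dot{\bar u}$ and $\nabla\dot{\bar u}$, together with the gained $\|\nabla\delta u\|_{L^2}$ on the LHS and Sobolev embedding in dimension $d=2,3$, converts the singular forcing at $t=0$ into a Volterra coupling with $\|\sqrt\rho\,\delta u\|_{L^2}$.

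The closure is then a weighted Gronwall/Volterra run on the pair $\bigl(\|\sqrt\rho\,\delta u(t)\|_{L^2}^2,\,\|\delta\rho(t)\|_{\text{weak}}\bigr)$, both vanishing at $t=0$, where the matching of powers of $t$ in the theorem's weighted hypotheses is precisely what is needed to make the resulting Volterra kernel integrable and force $\delta u\equiv0$ on $[0,T]$. From $\delta u\equiv0$, uniqueness of $\delta\rho$ is immediate via its transport equation and uniqueness of $\nabla\delta P$ follows from the momentum equation. The technical heart, as the paper's own remark emphasises, is the sharpness with which $t^{1/2}\nabla u\in L^2L^\infty$ can substitute for the classical $\nabla u\in L^1L^\infty$: every choice of time weight in the statement is engineered so that the Gronwall/Volterra kernel arising from the $\delta\rho$-coupling is integrable at $t=0$, and that is where I expect the subtlest analysis to lie.
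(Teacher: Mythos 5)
Your overall architecture (energy estimate for $\delta u$, a negative-regularity bound on $\delta\rho$ that exploits $\delta\rho(0)=0$, and a coupled Volterra/Gr\"onwall closure) matches the shape of the paper's argument, especially in 3D, but the central step is missing and, as proposed, would fail. You plan to obtain $\|\delta\rho(t)\|_{\mathrm{weak}}\lesssim\int_0^t\|\delta u\|_{L^2}\,\ds$ by testing the transport equation for $\delta\rho$ against $(-\Delta)^{-1}\delta\rho$ in an $\dot H^{-1}$-type space where ``the convective commutator with $\bar u$ is tame.'' It is not tame: any such forward-in-time propagation of a negative Sobolev norm of $\delta\rho$ produces a Gr\"onwall driver $\|\nabla\bar u(s)\|_{L^\infty}$, i.e.\ a factor $\exp\bigl(\int_0^t\|\nabla\bar u\|_{L^\infty}\,\ds\bigr)$, and under the hypothesis $s^{1/2}\nabla\bar u\in L^2(0,T;L^\infty)$ the integral $\int_0^t\|\nabla\bar u\|_{L^\infty}\,\ds$ may diverge (Cauchy--Schwarz only gives the useless bound with $\int_0^t s^{-1}\,\ds$). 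Circumventing exactly this obstruction is the whole point of the theorem. The paper's device is a \emph{backward} duality: for each fixed $t$ one solves $\pa_s\phi+\bar u\cdot\nabla\phi=0$ on $(0,t]$ with terminal data $\phi(t,\cdot)=\varphi$ (in 2D, $\varphi=\dot{\bar u}\cdot\delta u(t)$; in 3D, arbitrary $\varphi\in C_c^1$ to control $\|\delta\rho(t)\|_{\dot W^{-1,3}}$), so that only $\int_s^t\|\nabla\bar u\|_{L^\infty}\,\mathrm d\tau\leq C|\ln(t/s)|^{1/2}$ enters; the loss $\mathrm e^{C|\ln(t/s)|^{1/2}}$ is sub-polynomial and integrable at $s=0$. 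Combined with the identity $\frac{\mathrm d}{\ds}\langle\delta\rho(s),\phi(s)\rangle=\langle\rho\delta u,\nabla\phi\rangle$ and $\delta\rho(0)=0$, this yields the coupling without ever needing $\nabla\bar u\in L^1L^\infty$; a Hardy-type averaging lemma in $L^4$ plus a small-time smallness and bootstrap close the 2D case, and the weighted norm $X(t)=\sup_s s^{-3/4}\|\delta\rho(s)\|_{\dot W^{-1,3}}$ closes the 3D case. Your proposal defers precisely this to ``the subtlest analysis,'' so the key idea is not supplied.

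A secondary, fixable misstep: you estimate the term $\int\rho\,\delta u\cdot\nabla\bar u\cdot\delta u$ by $\|\nabla\bar u\|_{L^\infty}\|\sqrt\rho\,\delta u\|_{L^2}^2$ and hope to beat the resulting $t^{-1/2}$ singularity by a vanishing rate of $\|\sqrt\rho\,\delta u\|_{L^2}$ that you have no a priori way to establish (and which, in 2D, the paper never produces — it closes by smallness of $\|s\nabla\dot{\bar u}\|_{L^2(0,T_1;L^2)}$ on a short interval and bootstraps). This term does not need $L^\infty$ control at all: bound it by $\|\rho\delta u\|_{L^{4}}^2\|\nabla\bar u\|_{L^2}$ in 2D (using the lower bound on $\rho$) or $\|\rho\delta u\|_{L^3}\|\nabla\bar u\|_{L^2}\|\delta u\|_{L^6}$ in 3D, interpolate, absorb half of $\|\nabla\delta u\|_{L^2}^2$, and Gr\"onwall with $\nabla\bar u\in L^2_tL^2_x$ (resp.\ $L^4_tL^2_x$), which are exactly the hypotheses provided.
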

\begin{remark}
    Note that regularity requirements in the above uniqueness result are all at the critical level in the sense of \eqref{Eq.scaling}. Here we use $t^{1/2}\nabla \bar u\in L^2(0,T;L^{\infty}(\R^d))$ instead of $\nabla \bar u\in L^1(0,T;L^{\infty}(\R^d))$, which is a {great} improvement compared with previous works on the uniqueness.
\end{remark}

\begin{remark}\label{Rmk.uniqueness}
  Theorem \ref{Thm.uniqueness} solves Lions' open problem on the uniqueness of weak solutions for 2-D (INS), see page 31 of \cite{Lions_vol1}. In section 2.5 of \cite{Lions_vol1}, Lions indicated that the uniqueness of weak solutions could be showed by constructing a more regular ``strong" solution and proving the coincidence of weak solution and the strong one. In \cite{PT}, Prange and Tan proved the uniqueness of Lions' weak solution for $u_0$ satisfying $\sqrt{\rho_0}u_0\in L^2(\R^d)$ and $\nabla u_0\in L^2(\R^d)$ and for $\rho_0$ satisfying several cases.\footnote{In \cite{PT}, if $\rho_0$ has a far-field vacuum, they need an extra compatibility condition and a technical extra condition on the weak solution.} Compared with \cite{PT}, the authors in \cite{Hao_Shao_Wei_Zhang} proved that if $u_0\in H^1(\R^2)$ and $\rho_0$ allows for a vacuum bubble or a far-field vacuum without the compatibility condition, then Lions' weak solution is unique. Note that, for dimension $d=2$, these two works both require $\nabla u_0\in L^2(\R^2)$,\footnote{This regularity, higher than the critical one, ensures the weak-strong uniqueness for those $\rho_0$ possessing no positive lower bound. We also remark that in \cite{Hao_Shao_Wei_Zhang}, the regularity $u_0\in H^1(\R^2)$ is not optimal, it is possible to prove the weak-strong uniqueness if $u_0\in H^s(\R^2)$ for some $s\in(0,1)$.} although they allow the vacuum for $\rho_0$.  Theorem \ref{Thm.uniqueness} implies that in the 2-D case, if $\rho_0\in L^\infty(\R^2)$ is bounded away from zero and $u_0\in L^2(\R^2)$, then our solution is the same as Lions' weak solution \cite{Lions_vol1}. In particular, we prove that Lions' weak solution is unique if $0\leq \rho_0\in L^\infty(\R^2)$ is bounded away from zero and $u_0\in L^2(\R^2)$.
\end{remark}

\noindent\textbf{Notations.}

\begin{itemize}
    \item $\R^+:=(0, +\infty)$. $D_t\eqdef{(\pa_t+u\cdot \nabla)}$ is the material derivative.
    \item For a Banach space $X$ and an interval $I\subset\R$, we denote by $C(I;X)$ the set of continuous functions on $I$ with values in $X$. For $p\in[1,+\infty]$, the notation $L^p(I; X)$ stands for the collection of measurable functions on $I$ with values in $X$, such that $t\mapsto\|f(t)\|_X$ belongs to $L^p(I)$. For any $T>0$, we abbreviate $L^p((0, T); X)$ to $L^p(0, T; X)$ and sometimes we further abbreviate to $L^p(0, T)$ if there is no confusion. 
    \item $B(a,R):=\{x\in \R^d:|x-a|<R\},\, B_R:=B(0,R), ~\forall\ a \in \R^d,~ R>0$.
    \item For $s\in\R$ and $p\in[1,+\infty]$, we denote by $\dot W^{s,p}(\R^d)$ (or shortly $\dot W^{s,p}$) the standard homogeneous Sobolev spaces, and we also denote $\dot H^s:=\dot W^{s,2}$.
    \item We shall always denote $C$ to be a positive absolute constant which may vary from line to line. The dependence of the constant $C$ will be explicitly indicated if there are any exceptions.
\end{itemize}

\section{A priori estimates in the 2-D case}

\if0
\subsection{Functional Spaces and Some Technical Lemmas}

In this section, we shall introduce the functional spaces used in this paper, and some related lemmas. Let us first recall the following dyadic operators:
\begin{equation}\begin{split}\label{defparaproduct}
&\Delta_ju\eqdef \cF^{-1}\bigl(\varphi(2^{-j}|\xi|)\widehat{u}\bigr),
 \quad\
S_ju\eqdef\cF^{-1}\bigl(\chi(2^{-j}|\xi|)\widehat{u}\bigr),
\end{split}\end{equation}
where $\xi=(\xi_1,\xi_2,\xi_3)$, $\cF u$ and
$\widehat{u}$ denote the Fourier transform of $u$,
while $\cF^{-1} u$ denotes its inverse,
$\chi(\tau)$ and $\varphi(\tau)$ are smooth functions such that
\begin{align*}
&\operatorname{supp} \varphi \subset \Bigl\{\tau \in \R\,: \, \frac34 \leq
|\tau| \leq \frac83 \Bigr\}\quad\mbox{and}\quad \forall
 \tau>0\,,\ \sum_{j\in\Z}\varphi(2^{-j}\tau)=1,\\
& \operatorname{supp} \chi \subset \Bigl\{\tau \in \R\,: \, |\tau| \leq
\frac43 \Bigr\}\quad\mbox{and}\quad \forall
 \tau\in\R\,,\ \chi(\tau)+ \sum_{j\geq 0}\varphi(2^{-j}\tau)=1.
\end{align*}

By using these dyadic operators, we can define the homogeneous Besov spaces as follows:
\begin{definition}\label{defBesov}
{\sl Let $p,r\in[1,\infty]$ and $s\in\R$. The homogeneous Besov space $\dot{B}^s_{p,r}$ consists of those $u\in{\mathcal S}'$ with $\|S_ju\|_{L^\infty}\rightarrow0$ as $j\rightarrow-\infty$ such that
$$\|u\|_{\dot{B}^s_{p,r}}\eqdef\big\|\big(2^{js}
\|\Delta_j u\|_{L^p}\big)_{j\in\Z}\bigr\|
_{\ell ^{r}(\Z)}<\infty.$$
}\end{definition}
\fi

This part is devoted to the proof of a priori estimates for \eqref{INS} in the $2$-D case. All estimates in this section are essentially taken from \cite{Hao_Shao_Wei_Zhang}. Note that in \cite{Hao_Shao_Wei_Zhang}, some estimates hold only if additionally $\nabla u_0\in L^2(\R^2)$; nevertheless, here we only use those estimates in \cite{Hao_Shao_Wei_Zhang} that depend only on the condition $u_0\in L^2(\R^2)$ and $0< \rho_0\in L^\infty(\R^2)$.

Let $(\rho,u)$ be a smooth solution to the system \eqref{INS} on $[0,+\infty)\times \R^2$ satisfying $\rho_*< \rho\leq \|\rho_0\|_{L^{\infty}}$ for some constant $\rho_*>0$.\footnote{In this and the next section, we require the density to be bounded away from zero to ensure the smoothness and appropriate decay of the solution $(\rho, u)$, which will be useful in some steps involving integration by parts. Nevertheless, in all estimates, the constant $C>0$ is independent of this positive lower bound $\rho_*$. \label{footnote_low-bound}} Firstly, standard energy estimate gives 
\begin{align}\label{energy_2}
\|\sqrt{\rho}u\|_{L^\infty(\R^+; L^2)}^2
+2\|\nabla u\|^2_{L^2(\R^+; L^2)}
\leq 2\|\sqrt{\rho_0}u_0\|_{L^2}^2.
\end{align} 

The following lemma was essentially proved in Lemma 3.2 of \cite{LSZ}  and Lemma 3.2 of \cite{Hao_Shao_Wei_Zhang}.

\begin{lemma}\label{Lem.high_order_est_2D}
There exists a positive constant 
$C$ depending only on $\|\rho_0\|_{L^{\infty}}$ and $\|\sqrt{\rho_0}u_0\|_{L^2}$ such that 
\begin{align}\label{eq1}
    \|t^{1/2}&\nabla u\|_{L^\infty(\R^+; L^2(\R^2))}+\|t^{1/2}(\sqrt{\rho}\dot{u},\nabla^2u,\nabla P)\|_{L^2(\R^+; L^2(\R^2))}\leq C,\\
   \label{eq2} &\|t(\sqrt{\rho}\dot{u},\nabla^2u,\nabla P)\|_{L^\infty(\R^+; L^2(\R^2))}+\|t\nabla \dot u\|_{L^2(\R^+; L^2(\R^2))}\leq C.
\end{align}
\if0 \begin{equation*}\begin{aligned}
&\sup_{t\in [0,T]} t\|\nabla u\|^2_{L^2(\R^2)}+\int_0^Tt\|(\sqrt{\rho}\dot{u},\nabla^2u,\nabla P)\|^2_{L^2(\R^2)}\,\dt
\leq C,\\
&\sup_{t\in [0,T]} t\|(\sqrt{\rho} \dot{u},\nabla^2u,\nabla P)\|^2_{L^2(\R^2)}
+\int_0^Tt\|\nabla\dot{u}\|_{L^2(\R^2)}^2\,\dt\leq C.
\end{aligned}
\end{equation*}\fi 
\end{lemma}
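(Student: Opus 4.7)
The plan is to derive the two bounds by testing the momentum equation (and its material derivative) with suitable quantities and closing via Gronwall, using only the energy identity \eqref{energy_2} as the initial input. The time weights $t^{1/2}$ and $t$ play the role of an ``initial regularization'', compensating for the merely $L^2$ regularity of $u_0$. A basic ingredient used throughout is the Stokes regularity estimate $\|\nabla^2 u\|_{L^2}+\|\nabla P\|_{L^2}\leq C\|\rho_0\|_{L^\infty}^{1/2}\|\sqrt{\rho}\dot u\|_{L^2}$, obtained from $-\Delta u+\nabla P=-\rho\dot u$ together with $\dive u=0$; this is independent of the positive lower bound of $\rho$.

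For \eqref{eq1} I would test $\rho\dot u=\Delta u-\nabla P$ with $\dot u=u_t+u\cdot\nabla u$. Using $\dive u_t=0$ and integrating by parts, the Laplacian term contributes $\tfrac{1}{2}\tfrac{d}{dt}\|\nabla u\|_{L^2}^2+\int(u\cdot\nabla u)\cdot\Delta u\,\dx$ and the pressure contributes $\int(u\cdot\nabla u)\cdot\nabla P\,\dx$, giving
\[
\|\sqrt\rho\dot u\|_{L^2}^2+\tfrac{1}{2}\tfrac{d}{dt}\|\nabla u\|_{L^2}^2=\int(u\cdot\nabla u)\cdot(\Delta u-\nabla P)\,\dx.
\]
In 2-D, Gagliardo--Nirenberg yields $\|\nabla u\|_{L^3}^3\leq C\|\nabla u\|_{L^2}^2\|\nabla^2 u\|_{L^2}$, and the Stokes bound then absorbs the $\|\nabla^2 u\|_{L^2}+\|\nabla P\|_{L^2}$ factor into $\tfrac{1}{2}\|\sqrt\rho\dot u\|_{L^2}^2$, leaving
\[
\tfrac{1}{2}\|\sqrt\rho\dot u\|_{L^2}^2+\tfrac{1}{2}\tfrac{d}{dt}\|\nabla u\|_{L^2}^2\leq C\|\nabla u\|_{L^2}^4.
\]
Multiplying by $t$, integrating in time and using $\int_0^\infty\|\nabla u\|_{L^2}^2\,\ds\leq C$ from \eqref{energy_2} as the Gronwall weight (which is globally $L^1$), one closes $\|t^{1/2}\nabla u\|_{L^\infty(L^2)}+\|t^{1/2}\sqrt\rho\dot u\|_{L^2(L^2)}\leq C$; the Stokes estimate converts this into the claimed bounds on $\nabla^2 u$ and $\nabla P$.

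For \eqref{eq2} I would apply $D_t=\partial_t+u\cdot\nabla$ to the momentum equation. Since $D_t\rho=0$, this gives $\rho\ddot u=\Delta\dot u-\nabla D_t P+\mathcal R$, where $\mathcal R$ gathers the commutators $[D_t,\Delta]u$ and $[D_t,\nabla]P$, bilinear in $\nabla u,\nabla^2 u$ and $\nabla u,\nabla P$ respectively. Testing with $\dot u$, the $\nabla D_t P$ term drops thanks to $\dive\dot u=-\partial_iu^k\partial_k u^i$ (bilinear and handled like $\mathcal R$), and one obtains
\[
\tfrac{1}{2}\tfrac{d}{dt}\|\sqrt\rho\dot u\|_{L^2}^2+\|\nabla\dot u\|_{L^2}^2\leq C\|\nabla u\|_{L^4}^2\|\nabla\dot u\|_{L^2}\|\sqrt\rho\dot u\|_{L^2}+\text{(similar cubic terms)}.
\]
Absorbing $\tfrac{1}{2}\|\nabla\dot u\|_{L^2}^2$ on the left and using the 2-D bound $\|\nabla u\|_{L^4}^2\leq C\|\nabla u\|_{L^2}\|\nabla^2 u\|_{L^2}$, then multiplying by $t^2$ and integrating, the factor $t\|\nabla u\|_{L^2}^2\leq C$ and $\int_0^t s\|\nabla^2 u\|_{L^2}^2\,\ds\leq C$ from step one feed directly into a Gronwall argument to yield $\|t\sqrt\rho\dot u\|_{L^\infty(L^2)}+\|t\nabla\dot u\|_{L^2(L^2)}\leq C$. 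The Stokes estimate again promotes this to control of $t(\nabla^2 u,\nabla P)$ in $L^\infty(L^2)$.

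The main obstacle is bookkeeping the time weights when handling the commutator $\mathcal R$ and the boundary term produced by differentiating $t^2\|\sqrt\rho\dot u\|_{L^2}^2$ (which leaves behind $2t\|\sqrt\rho\dot u\|_{L^2}^2$, integrable in time precisely by the first estimate). One must avoid any hypothesis equivalent to $\nabla u_0\in L^2$: this is achieved by integrating only from a positive time, which is why the weights $t^{1/2}, t$ appear naturally. Since these computations are essentially carried out in Lemma 3.2 of \cite{LSZ} and Lemma 3.2 of \cite{Hao_Shao_Wei_Zhang}, I would follow those arguments verbatim, only checking that the final constant depends solely on $\|\rho_0\|_{L^\infty}$ and $\|\sqrt{\rho_0}u_0\|_{L^2}$, not on the auxiliary positive lower bound $\rho_\ast$ or on $\|\nabla u_0\|_{L^2}$.
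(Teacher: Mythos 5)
Your proposal follows essentially the same route as the paper: derive the differential inequality $\frac{\mathrm d}{\dt}\|\nabla u\|_{L^2}^2+\|\sqrt\rho\dot u\|_{L^2}^2\leq C\|\nabla u\|_{L^2}^4$ together with the Stokes bound $\|(\nabla^2u,\nabla P)\|_{L^2}\leq C\|\sqrt\rho\dot u\|_{L^2}$, multiply by $t$ and apply Gr\"onwall with the energy $\int\|\nabla u\|_{L^2}^2\,\dt\leq C$ to get \eqref{eq1}; then apply $D_t$ to the momentum equation, test with $\dot u$, weight by $t^2$ and use \eqref{eq1} to integrate the boundary term $2t\|\sqrt\rho\dot u\|_{L^2}^2$, which is exactly the paper's scheme (the paper simply imports (3.11)--(3.12) of \cite{LSZ} and (3.5)--(3.8) of \cite{Hao_Shao_Wei_Zhang} and does the time-weighting), and you explicitly say you would follow those computations verbatim. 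The one point where your sketch, taken literally, would fail is the pressure term in the second step: after integration by parts it is $\int D_tP\,\dive\dot u\,\dx$ with $\dive\dot u=\pa_iu^j\pa_ju^i$, and this is \emph{not} ``handled like $\mathcal R$'', because neither $P_t$ nor $D_tP$ is controlled at this stage. One must integrate by parts in time, writing $\int P_t\,\pa_iu^j\pa_ju^i\,\dx=\frac{\mathrm d}{\dt}\int P\,\pa_iu^j\pa_ju^i\,\dx-\int P\,\pa_t(\pa_iu^j\pa_ju^i)\,\dx$, and work with the corrected functional $\Psi(t)=\frac12\|\sqrt\rho\dot u\|_{L^2}^2-\int P\,\pa_iu^j\pa_ju^i\,\dx$, recovering $\|\sqrt\rho\dot u\|_{L^2}^2$ at the end through the two-sided bound \eqref{psi_2} (paid for by $\|\nabla u\|_{L^2}^4$, which the weight $t$ and \eqref{eq1} make harmless). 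Since this is precisely what the cited lemmas do, your deferral covers it, but as written the claim that the $\nabla D_tP$ contribution ``drops'' hides the only genuinely delicate step of the argument; the rest of your bookkeeping (independence of $\rho_*$ and of $\|\nabla u_0\|_{L^2}$, integrability of the weighted boundary terms) is correct.
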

\begin{proof}
It was proved in \cite{LSZ} (3.11), (3.12) that (also using $0\leq \rho\leq \|\rho_0\|_{L^{\infty}}$)
\begin{align}\label{3.11}
    &\|(\nabla^2u,\nabla P)\|_{L^2}\leq C\|{\rho}\dot{u}\|_{L^2}\leq C\|\sqrt{\rho}\dot{u}\|_{L^2},\\ 
    \label{3.12}&\frac{\mathrm d}{\dt}\|\nabla u\|_{L^2}^2+\|\sqrt{\rho}\dot{u}\|_{L^2}^2\leq  C\|\nabla u\|_{L^2}^4,
\end{align}
where $C$ depends only on $\|\rho_0\|_{L^{\infty}}$. Multiplying \eqref{3.12} by $t$, using \eqref{energy_2}, Gr\"onwall's inequality and \eqref{3.11} yields \eqref{eq1}. It was proved in \cite{Hao_Shao_Wei_Zhang} (3.5), (3.7), (3.8) that\begin{align}
    &\Psi{'}(t)+\|\nabla \dot{u}\|^2_{L^2}{/2}
\leq C \|\nabla u\|^4_{L^4}+C \|\nabla P\|^2_{L^2}\|\nabla u\|^2_{L^2},\label{psi}\\
\label{psi_2}&\f14\int_{\R^2}\rho|\dot{u}|^2\,\dx-C\|\nabla u\|^4_{L^2}\leq
\Psi(t)\leq \int_{\R^2}\rho|\dot{u}|^2\,\dx+C\|\nabla u\|^4_{L^2},\\
\label{u^4}&\|\nabla u\|^4_{L^4}+\|\nabla P\|^2_{L^2}\|\nabla u\|^2_{L^2}\leq C\|\sqrt{\rho}\dot{u}\|^2_{L^2} \|\nabla u\|^2_{L^2},
\end{align}where $C$ depends only on $\|\rho_0\|_{L^{\infty}}$, and $$
\Psi(t)\eqdef \f12\int_{\R^2}\rho|\dot{u}|^2\,\dx
-\int_{\R^2} P \pa_iu^j\pa_ju^i\,\dx.$$
Multiplying \eqref{psi} by $t^2$ and using \eqref{psi_2}, \eqref{u^4},  \eqref{eq1}, \eqref{energy_2} and \eqref{3.11} yields \eqref{eq2}.
\end{proof}

In order to prove the existence of weak solution, we need a $L^\infty$ bound of $|u(t,x)|$. 
As $\rho_0\not\equiv 0$, there exists $R_0\in (0,\infty)$ such that
\begin{align}\label{c0}
\int_{B(0,R_0)} \rho_0(x)\,\dx\geq c_0>0.    
\end{align}

\if0\begin{lemma}[Lemma 2.2 of \cite{Hao_Shao_Wei_Zhang}]\label{lem2}
Assume \eqref{c0}, $t>0$ and $R\geq R_0+2t\|\sqrt{\rho_0}u_0\|_{L^2}/\sqrt{c_0}$. Then we have 
\begin{align*}
\int_{B(0,R)} \rho(t,x)\,\dx\geq c_0/4.    
\end{align*}
\end{lemma}\fi 

\begin{lemma}\label{u_L^infty}
Assume \eqref{c0}. Then there exists a constant $C>0$ depending only on $\|\rho_0\|_{L^{\infty}}$,
$\|\sqrt{\rho_0}u_0\|_{L^2}$, $R_0$ and $c_0$ such that 
\begin{align*}
\sqrt{t}|u(t,x)|\leq C[t+\ln(|x|+t+t^{-1})]^{\f12}, \quad \forall\ t>0,\ \ x\in\R^2.
\end{align*}
\end{lemma}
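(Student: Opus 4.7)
My plan is to combine three ingredients: a quantitative mass lower bound on a moving ball of controlled radius, the selection of an ``anchor'' point where $|u(t,\cdot)|$ is bounded by a universal constant, and a two-dimensional logarithmic Sobolev inequality.

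First, I would invoke the mass lower bound (the commented-out Lemma~2.2 of \cite{Hao_Shao_Wei_Zhang} stated just above): for $R(t):=R_0+2t\|\sqrt{\rho_0}u_0\|_{L^2}/\sqrt{c_0}$, one has $\int_{B(0,R(t))}\rho(t,y)\,\mathrm{d}y\geq c_0/4$. This comes from the transport equation $\pa_t\rho+u\cdot\na\rho=0$ together with the energy bound \eqref{energy_2}, which controls the rate at which the initial mass can spread. Combining with $\int_{B(0,R(t))}\rho|u|^2\,\mathrm{d}y\leq\|\sqrt\rho u(t)\|_{L^2}^2\leq 2\|\sqrt{\rho_0}u_0\|_{L^2}^2$ via Chebyshev, the set $E_t:=\{y\in B(0,R(t)):\ |u(t,y)|^2\leq 16\|\sqrt{\rho_0}u_0\|_{L^2}^2/c_0\}$ has positive Lebesgue measure, so I can pick an anchor point $y_0(t)\in B(0,R(t))$ with $|u(t,y_0)|\leq C$ and $|x-y_0|\leq |x|+R(t)\leq C(1+|x|+t)$ for every $x\in\R^2$.

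Next, I would propagate the bound from $y_0$ to an arbitrary $x$ via a 2-D log-Sobolev inequality of Brezis--Gallouet/Trudinger type, namely
\[
|w(x)-w(y)|^2\leq C\|\na w\|_{L^2(\R^2)}^2\,\log\!\Big(2+|x-y|\,\tfrac{\|\na^2 w\|_{L^2(\R^2)}}{\|\na w\|_{L^2(\R^2)}}\Big)
\]
for smooth $w$ on $\R^2$. The plan for this estimate is to combine the standard averaging identity $|\bar w_{\pa B(x,r_2)}-\bar w_{\pa B(x,r_1)}|\leq C\sqrt{\log(r_2/r_1)}\|\na w\|_{L^2(\R^2)}$ (obtained by parameterizing $\bar w_{\pa B(x,s)}=\tfrac{1}{2\pi}\int_{S^1}w(x+s\theta)\,d\theta$, differentiating in $s$, and applying Cauchy--Schwarz in polar coordinates), the elementary shift bound $|\bar w_{B(x,r)}-\bar w_{B(y,r)}|\leq C|x-y|r^{-1}\|\na w\|_{L^2(\R^2)}$ for $r\geq|x-y|$, and the small-scale Gagliardo--Nirenberg/Morrey control $|w(x)-\bar w_{B(x,r_1)}|\leq Cr_1^{1-2/p}\|\na w\|_{L^2}^{2/p}\|\na^2 w\|_{L^2}^{1-2/p}$ for $p>2$, optimizing in $r_1,r_2,p$ to extract the logarithm. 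Applying this with $w=u(t,\cdot)$, using $\|\na u(t)\|_{L^2}\leq Ct^{-1/2}$ and $\|\na^2 u(t)\|_{L^2}\leq Ct^{-1}$ from \eqref{eq1}--\eqref{eq2}, the triangle inequality $|u(t,x)|^2\leq 2|u(t,y_0)|^2+2|u(t,x)-u(t,y_0)|^2$, and $|x-y_0|\leq C(1+|x|+t)$, I would arrive at
\[
t|u(t,x)|^2\leq Ct+C\log\!\big(2+(|x|+t)t^{-1/2}\big)\leq C\big[t+\ln(|x|+t+t^{-1})\big],
\]
which is the asserted bound.

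The main obstacle will be the log-Sobolev step: one must keep the $\|\na^2 u\|_{L^2}$ contribution strictly \emph{inside} the logarithm, since the naive two-dimensional embedding $H^2\hookrightarrow L^\infty$ would place the factor $\|\na^2 u(t)\|_{L^2}\leq Ct^{-1}$ \emph{outside} the log and ruin the estimate for small $t$. The mass lower bound and the anchor point are likewise essential: there is no global $L^2$ bound on $u$ (we only have $\sqrt\rho u\in L^2$, with $\rho$ possibly vanishing far from the initial mass), and the anchor supplies a universal reference value of $u$ in a ball of size $O(1+t)$ rather than at spatial infinity.
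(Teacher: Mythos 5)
Your proposal is correct and is structurally the same argument as the paper's: the paper likewise invokes Lemma 2.2 of \cite{Hao_Shao_Wei_Zhang} with $R(t)=R_0+2t\|\sqrt{\rho_0}u_0\|_{L^2}/\sqrt{c_0}$ to produce an anchor point $x_t\in B(0,R(t))$ with $|u(t,x_t)|\le C$, and then a logarithmic oscillation estimate $|u(t,x)-u(t,y)|\le Ct^{-1/2}[\ln(2+|x-y|/\sqrt t)]^{1/2}$ coming from $\|\nabla u(t)\|_{L^2}\le Ct^{-1/2}$ and $\|\nabla^2u(t)\|_{L^2}\le Ct^{-1}$ (Lemma \ref{Lem.high_order_est_2D}). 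The only divergence is how that oscillation bound is obtained: the paper gets it in a few lines by Plancherel, combining $\int_{\R^2}|\hat u(t,\xi)|^2|\xi|^2(1+t|\xi|^2)\,\mathrm d\xi\le Ct^{-1}$ with $\int_{\R^2}\min(|\xi|^2|x-y|^2,4)\,|\xi|^{-2}(1+t|\xi|^2)^{-1}\,\mathrm d\xi\le C\ln(2+|x-y|/\sqrt t)$, whereas you propose a real-variable Brezis--Gallou\"et inequality in ratio form via circle averages, a ball-shift bound and Morrey/Gagliardo--Nirenberg; your sketch of that inequality is sound (a fixed $p>2$, say $p=4$, with $r_1=\|\nabla w\|_{L^2}/\|\nabla^2 w\|_{L^2}$ suffices, no optimization in $p$ needed), and it buys a Fourier-free, self-contained statement at the cost of a longer derivation. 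One small point you should make explicit when applying the ratio form: only upper bounds on $\|\nabla u(t)\|_{L^2}$ and $\|\nabla^2u(t)\|_{L^2}$ are available and $\|\nabla u(t)\|_{L^2}$ has no lower bound, so to insert these bounds you need that $A\mapsto A^2\log(2+rB/A)$ is increasing (it is), or equivalently you can use the parameterized form $|w(x)-w(y)|^2\le C\bigl(\|\nabla w\|_{L^2}^2+s\|\nabla^2w\|_{L^2}^2\bigr)\log(2+|x-y|/\sqrt s)$ with $s=t$, which is exactly the weight the paper's Fourier computation produces.
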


\begin{proof}
Let $R(t):= R_0+2t\|\sqrt{\rho_0}u_0\|_{L^2}/\sqrt{c_0}$. Then by Lemma 2.2 of \cite{Hao_Shao_Wei_Zhang}, we have 
\begin{align*}
\int_{B(0,R(t))} \rho(t,x)\,\dx\geq c_0/4.
\end{align*}
So, we obtain
\begin{align*}
\frac{c_0}{4} \inf_{x\in B(0,R(t))} |u(t,x)|^2
\leq \int_{B(0,R(t))} \rho|u|^2\,\dx\leq \|\sqrt{\rho_0}u_0\|^2_{L^2}\leq C,
\end{align*}
which implies that there exists a $x_t\in B(0,R(t))$ such that $|u(t,x_t)|\leq C_1$.

By Lemma \ref{Lem.high_order_est_2D}, we have
\begin{align*}
\|\nabla u(t)\|^2_{L^2}=\int_{\R^2}|\hat{u}(t, \xi)|^2|\xi|^2\,\mathrm d\xi\leq Ct^{-1},\quad
\|\nabla^2 u(t)\|^2_{L^2}=\int_{\R^2}|\hat{u}(t, \xi)|^2|\xi|^4\,\mathrm d\xi\leq Ct^{-2},
\end{align*}
which ensures that 
\begin{align}\label{ut}
\int_{\R^2}|\hat{u}(t,\xi)|^2|\xi|^2(1+t|\xi|^2)\,\mathrm d\xi\leq Ct^{-1}.    
\end{align}
Thus,  for any $x\neq y\in \R^2$, we have (as $|e^{i\xi\cdot x}-e^{i\xi\cdot y}|\leq\min ( |\xi||x-y|,2) $)
\begin{align}
    |u(t, x)-u(t, y)|
&\leq \int_{\R^2}|\hat{u}(t, \xi)||e^{i\xi\cdot x}-e^{i\xi\cdot y}|\,\mathrm d\xi\nonumber\\
&\leq \biggl(\int_{\R^2}|\hat{u}(t, \xi)|^2|\xi|^2(1+t|\xi|^2)\,\mathrm d\xi\biggr)^{1/2}
\biggl(\int_{\R^2}\frac{\min ( |\xi|^2|x-y|^2,4)}{|\xi|^2(1+t|\xi|^2)}\,\mathrm d\xi\biggr)^{1/2},\label{u_x_y}
\end{align}
where 
\begin{align*}
\int_{\R^2}\frac{\min ( |\xi|^2|x-y|^2,4)}{|\xi|^2(1+t|\xi|^2)}\,\mathrm d\xi&=\biggl(\int_{|\xi|\leq |x-y|^{-1}}+\int_{|\xi|> |x-y|^{-1}}\biggr)
\frac{\min ( |\xi|^2|x-y|^2,4)}{|\xi|^2(1+t|\xi|^2)}\,\mathrm d\xi\\
&\leq C\ln \left(2+|x-y|/\sqrt{t}\right),
\end{align*}
 which along with \eqref{ut} and \eqref{u_x_y} implies 
\begin{equation}\label{Eq.u(x)-u(y)}
|u(t, x)-u(t, y)|\leq Ct^{-\f12}\left[\ln \left(2+|x-y|/\sqrt{t}\right)\right]^{\f12}.
\end{equation}

Therefore, by \eqref{Eq.u(x)-u(y)}, $x_t\in B(0,R(t))$, $|u(t,x_t)|\leq C_1$ and $R(t)= R_0+2t\|\sqrt{\rho_0}u_0\|_{L^2}/\sqrt{c_0}$, we arrive at 
\begin{equation*}\begin{aligned}
|u(t,x)|&\leq |u(t,x_t)|+|u(t,x)-u(t,x_t)|
\leq C+Ct^{-\f12}[\ln \bigl(2+t^{-1}+|x|+R(t)\bigr)]^{\f12}\\
&\leq C+Ct^{-\f12}[\ln (t+|x|+t^{-1})]^\f12.
\end{aligned}\end{equation*}
Then we get
\begin{align*}
|\sqrt{t}u(t,x)|&\leq C\sqrt{t}+C[\ln (t+|x|+t^{-1})]^\f12\leq  C[t+\ln(|x|+t+t^{-1})]^{\f12}, \quad \forall\ t>0,\ \ x\in\R^2.    
\end{align*}

This completes the proof of the lemma.
\end{proof}

\begin{lemma}\label{Lem.L^2_L^infty_2D}
    There exists a constant $C>0$ depending only on $\|\rho_0\|_{L^{\infty}}$ and $\|\sqrt{\rho_0}u_0\|_{L^2}$ such that 
    \begin{equation}
        \|t^{1/2}\nabla u\|_{L^2(\R^+; L^\infty(\R^2))}\leq C.
    \end{equation}
\end{lemma}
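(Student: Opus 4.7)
The strategy is to establish a 2-D logarithmic Sobolev inequality for $\nabla u$ and then integrate the resulting pointwise-in-$t$ bound against $t\,\dt$, relying on the weighted estimates of Lemma \ref{Lem.high_order_est_2D}. The key pointwise inequality I would prove is, for some fixed $p>2$,
\[\|\nabla u\|_{L^\infty}^2\le C\|\nabla u\|_{L^2}^2+C\|\nabla^2 u\|_{L^2}^2\log\Bigl(e+\tfrac{\|\nabla^2 u\|_{L^p}}{\|\nabla^2 u\|_{L^2}}\Bigr).\]
This is obtained from a Littlewood--Paley split $\nabla u=\sum_j\Delta_j\nabla u$: on the intermediate-frequency blocks, Bernstein $\|\Delta_j f\|_{L^\infty}\lesssim 2^j\|\Delta_j f\|_{L^2}$ combined with Cauchy--Schwarz in $j$ produces a $(\log N)^{1/2}$ factor against $\|\nabla^2 u\|_{L^2}$; on the high-frequency tail, Bernstein $\|\Delta_j f\|_{L^\infty}\lesssim 2^{2j/p}\|\Delta_j f\|_{L^p}$ yields a geometric series that converges for $p>2$ with remainder $\sim N^{2/p-1}\|\nabla^2 u\|_{L^p}$; optimizing over the threshold $2^N$ gives the logarithmic form.

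For the higher-integrability factor I would invoke $L^p$-Stokes elliptic regularity applied to the momentum equation $-\Delta u+\nabla P=-\rho\dot u$, $\dive u=0$:
\[\|\nabla^2 u\|_{L^p}\le C_p\|\rho\dot u\|_{L^p}\le C_p\|\rho_0\|_{L^\infty}^{1/2}\|\sqrt\rho\dot u\|_{L^p}.\]
Using the standing assumption $\rho_*>0$ (as permitted by the footnote), I control $\|\sqrt\rho\dot u\|_{L^p}\le \|\sqrt\rho\|_{L^\infty}\|\dot u\|_{L^p}$ and then apply the 2-D Gagliardo--Nirenberg inequality $\|\dot u\|_{L^p}\le C\|\dot u\|_{L^2}^{2/p}\|\nabla\dot u\|_{L^2}^{1-2/p}$. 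Polynomial-in-$\rho_*^{-1}$ factors enter only through the logarithm, so they are absorbed into an additive constant depending on $\|\rho_0\|_{L^\infty}$ after integration; together with the time-weighted bounds $t\sqrt\rho\dot u\in L^\infty(L^2)$ and $t\nabla\dot u\in L^2(L^2)$ from Lemma \ref{Lem.high_order_est_2D}, this gives a bound of the form $\|\nabla^2 u(t)\|_{L^p}\le g(t)$ with $g$ of at most polynomial growth in $t^{-1}$.

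Squaring the log-Sobolev inequality and multiplying by $t$, integration over $\R^+$ reduces to bounding $\int_0^\infty t\|\nabla^2 u\|_{L^2}^2\log(\cdots)\,\dt$, which is controlled by H\"older since $t\|\nabla^2 u\|_{L^2}^2\in L^1(\R^+)$ by \eqref{eq1} and the logarithmic factor is controlled (at worst polylogarithmically in $t$) by the previous step. The residual $\|\nabla u\|_{L^2}^2$ term is handled by sharpening the low-frequency Littlewood--Paley bound (replacing $\|\nabla u\|_{L^2}^2$ by $C\|\nabla u\|_{L^2}\|\nabla^2 u\|_{L^2}$ via a 2-D Gagliardo--Nirenberg interpolation on the low block) and then using $\sup_{t>0}t^{1/2}\|\nabla u\|_{L^2}\le C$ together with Cauchy--Schwarz in $t$ to bring in $\|t^{1/2}\nabla^2 u\|_{L^2(L^2)}$.

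The main obstacle is verifying that the final constant is truly independent of $\rho_*$: one must check that every appearance of $\rho_*^{-1}$ stays inside the outer logarithm (so that it contributes only through $\log(1/\rho_*)$ against the integrable weight $t\|\nabla^2 u\|_{L^2}^2$, which by re-examining the proof of \eqref{eq1}--\eqref{eq2} is in fact quantitatively bounded by $\|\rho_0\|_{L^\infty}$-dependent quantities alone), rather than as a multiplicative prefactor, so that the bound claimed in the lemma depends only on $\|\rho_0\|_{L^\infty}$ and $\|\sqrt{\rho_0}u_0\|_{L^2}$.
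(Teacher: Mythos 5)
Your Brezis--Gallouet strategy has a genuine gap exactly at the point you flag as ``the main obstacle'': the $\rho_*$-dependence does not go away. To control $\|\nabla^2u\|_{L^p}$ you pass through $\|\dot u\|_{L^p}\le C\|\dot u\|_{L^2}^{2/p}\|\nabla\dot u\|_{L^2}^{1-2/p}$, and the only way to reach $\|\dot u\|_{L^2}$ from the quantities that are actually controlled ($\|\sqrt\rho\dot u\|_{L^2}$ and $\|\nabla\dot u\|_{L^2}$) is $\|\dot u\|_{L^2}\le\rho_*^{-1/2}\|\sqrt\rho\dot u\|_{L^2}$; in $\R^2$ there is no embedding $\dot H^1\hookrightarrow L^p$ to bypass this, and weighted Poincar\'e-type substitutes would bring in the constants $c_0,R_0$ of \eqref{c0}, which are likewise not allowed in the lemma's constant. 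Even granting that $\rho_*^{-1}$ only enters inside the logarithm, the final bound is then of the form $C\bigl(1+\log(1/\rho_*)\bigr)$, and $\log(1/\rho_*)$ cannot be ``absorbed into an additive constant depending on $\|\rho_0\|_{L^\infty}$'': it diverges as $\rho_*\to0$, while the lemma requires a constant depending only on $\|\rho_0\|_{L^\infty}$ and $\|\sqrt{\rho_0}u_0\|_{L^2}$ (see footnote \ref{footnote_low-bound}). This uniformity is precisely what Section 4 uses, where the approximate densities satisfy $\rho_0^\varepsilon\ge\varepsilon$ with $\varepsilon\to0+$, so a $\log(1/\rho_*)$ loss breaks the existence argument. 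There are secondary problems too: the pointwise-in-time bound $\|\nabla^2u(t)\|_{L^p}\le g(t)$ is not available, since Lemma \ref{Lem.high_order_est_2D} controls $t\nabla\dot u$ only in $L^2(\R^+;L^2)$, not in $L^\infty_t$; and your treatment of the residual low-frequency term needs $\int_0^\infty t\|\nabla u\|_{L^2}\|\nabla^2u\|_{L^2}\,\dt$, which is not finite from the available estimates at large times, because $\|t^{1/2}\nabla u\|_{L^2(\R^+;L^2)}$ is not controlled (for instance $\|\nabla u(t)\|_{L^2}^2\sim t^{-1}(\log t)^{-2}$ for large $t$ is consistent with \eqref{energy_2} and \eqref{eq1} but makes that integral diverge).

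The paper avoids all of this with a single vacuum-compatible interpolation inequality (from Lemma 3.5 of \cite{Hao_Shao_Wei_Zhang}),
\begin{equation*}
\|\nabla u\|_{L^\infty}\le C\|\nabla u\|_{L^2}^{1/2}\|\nabla\dot u\|_{L^2}^{1/2}+C\|\sqrt\rho\dot u\|_{L^2},
\end{equation*}
with an absolute constant, so no $L^p$ norm of $\dot u$ itself (hence no $\rho_*$) is ever needed. Multiplying by $t$ and integrating, the weight is split so that $\nabla\dot u$ carries the full factor $t$ and $\sqrt\rho\dot u$ carries $t^{1/2}$, which pairs exactly with $\|\nabla u\|_{L^2(\R^+;L^2)}$ from \eqref{energy_2}, $\|t\nabla\dot u\|_{L^2(\R^+;L^2)}$ from \eqref{eq2}, and $\|t^{1/2}\sqrt\rho\dot u\|_{L^2(\R^+;L^2)}$ from \eqref{eq1}. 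If you want to salvage a log-Sobolev route, you would have to formulate the logarithmic correction purely in terms of $\|\sqrt\rho\dot u\|_{L^2}$ and $\|\nabla\dot u\|_{L^2}$, not through $\|\dot u\|_{L^2}$ or $\|\dot u\|_{L^p}$.
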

\begin{proof}
    Following the proof of Lemma 3.5 in \cite{Hao_Shao_Wei_Zhang}, we have
    \begin{equation}
        \|\nabla u\|_{L^{\infty}(\R^2)}\leq C\|\nabla u\|_{L^2(\R^2)}^{1/2}\|\nabla \dot{u}\|_{L^2(\R^2)}^{1/2}+C\|\sqrt{\rho}\dot{u}\|_{L^2(\R^2)},
    \end{equation}
    where $C>0$ is an absolute constant. Then by Lemma \ref{Lem.high_order_est_2D}, we obtain
    \begin{align*}
     &\int_0^{\infty}t\|\nabla u\|^2_{L^{\infty}}\,\dt\leq C\int_0^{\infty}t\bigl(\|\nabla u\|_{L^2}\|\nabla \dot{u}\|_{L^2}+\|\sqrt{\rho}\dot{u}\|^2_{L^2}\bigr)\,\dt\\
     &\leq C\|\nabla u\|_{L^2(\R^+;L^2)}\|t\nabla\dot{u}\|_{L^2(\R^+;L^2)}+C\|\sqrt{t\rho}\dot{u}\|^2_{L^2(\R^+;L^2)}\leq C.  
    \end{align*}
    
    This completes the proof of Lemma \ref{Lem.L^2_L^infty_2D}.
\end{proof}

\if0
\begin{proof}[Proof of the existence part of Theorem \ref{Thm.existence_2D}]
We consider (where $\varepsilon\in(0,1)$)
$$ u_0^{\epsilon}\in C^{\infty}(\R^2) \with  \dive u_0^{\epsilon}=0 \andf \rho_0^{\epsilon}\in C^{\infty}(\R^2) \with \epsilon\leq \rho_0^{\epsilon}\leq C_0 $$
such that
\begin{equation*}
u_0^{\epsilon}\to u_0 \text{~in~} L^2, \quad \rho_0^{\epsilon}\rightharpoonup \rho_0 \text{~in~} L^{\infty} \text{~weak-*},
\andf~ \rho_0^{\epsilon}\to \rho_0 \text{~in~} L^{p}_{\text{loc}} \text{~if~} p<\infty.
\end{equation*}
In light of the classical strong solution theory for the system \eqref{INS}, there exists a unique global smooth solution $(\rho^{\epsilon}, u^{\epsilon}, P^{\epsilon})$
corresponding to data $(\rho_0^{\epsilon},u_0^{\epsilon})$. Thus, the triple $(\rho^{\epsilon}, u^{\epsilon}, P^{\epsilon})$  satisfies all the a priori estimates of this subsections uniformly with respect to $\epsilon\in(0,1)$. In particular, for any $T>0$, $R>0$ and $\alpha\in(0,1)$ we have $\sqrt t|u^{\epsilon}(t,x)|\leq C_T[\ln(|x|+t+t^{-1})]^{\f12}$, $\int_0^Tt^\alpha\|u^{\epsilon}\|^2_{L^{\infty}(B_R)}\,\dt\leq C(\alpha, R,T) $, and 
\begin{equation*}
\int_0^T t^{2+\alpha}(\|\nabla \dot{u^{\epsilon}}\|^2_{L^2}
+\|\nabla u^{\epsilon}\|^2_{L^2}\|\nabla^2u^{\epsilon}\|_{L^2}^2+\|u^{\epsilon}\|^2_{L^{\infty}(B_R)}\|\nabla^2 u^{\epsilon}\|^2_{L^2})\,\dt\leq C(\alpha,R,T).
\end{equation*}
By definition, we have
\begin{equation*}\begin{aligned}
\|\nabla u^{\epsilon}_t\|_{L^2(B_R)}
&\leq \|\nabla \dot{u^{\epsilon}}\|_{L^2(B_R)}
+\|\nabla u^{\epsilon}\cdot\nabla u^{\epsilon}
+u^{\epsilon}\cdot \nabla^2 u^{\epsilon}\|_{L^2(B_R)}\\ 
&\leq \|\nabla \dot{u^{\epsilon}}\|_{L^2}
+C(\|\nabla u^{\epsilon}\|_{L^2}
+\|u^{\epsilon}\|_{L^{\infty}(B_R)})\|\nabla^2 u^{\epsilon}\|_{L^2}
\end{aligned}\end{equation*}
for any $R>0$. Thus, we obtain
\begin{equation*}
\int_0^T t^{2+\alpha}\|\nabla u_t^{\epsilon}\|^2_{L^2(B_R)}\,\dt
\leq C(\alpha, R,T).
\end{equation*}
Similarly with the arguments in \cite{Hao_Shao_Wei_Zhang}, we have
\begin{equation*}
\int_0^Tt^{2+\alpha}\|u_t^\varepsilon\|_{L^2(B_R)}^2\,\dt\leq C(\alpha, R, T),\quad\forall\ T>0, R>0, \alpha\in(0,1).
\end{equation*}

{\color{red}The following will be modified. maybe use another method $t\in[0,\epsilon],[\epsilon,1]$}

By Lemma 3.4 in \cite{DM}, we know that for all $\alpha\in (0,1/2)$
\begin{align}\label{inte}
\| u^{\epsilon}\|_{H^{1/2-\alpha}(0,T;H^1(B_R))}
\leq \| u^{\epsilon}\|_{L^2(0,T;H^1(B_R))}
+C_{\alpha,T}\|t^{1/2} u^{\epsilon}_t\|_{L^2(0,T;H^1(B_R))},    
\end{align}
where $C_{\alpha,T}$ depending only on $\alpha$ and $T$. By \eqref{energy} and Lemma \ref{ nabla u}, we also have
\begin{align*}
\|\nabla u^{\epsilon}\|_{L^2(0,T;{H}^1)}\leq C,   
\end{align*}
By the interpolation with \eqref{inte}, we can deduce that  
\begin{align*}
\|(u^{\epsilon},\nabla u^{\epsilon})\|_{{H}^{1/2-\al}([0,T]\times B_R)} \leq C(R,T)\quad   \text{~ for all~} R,T\in(0,+\infty) .   
\end{align*}
This implies that up to a subsequence, $u^{\epsilon}\to u$ in $L^2_{loc}([0,+\infty);{H}^{1}_{loc})$.
\end{proof}
\fi

\section{A priori estimates in the 3-D case}
In this section, we establish the estimates that are needed to prove Theorem \ref{Thm.existence}. Let $(\rho,u)$ be a smooth solution to the system \eqref{INS} on $[0,T_*)\times \R^3$ with $\rho_*< \rho\leq \|\rho_0\|_{L^{\infty}}$ for some constant $\rho_*>0$ (recalling footnote \ref{footnote_low-bound}) and $u_0\in\dot H^{1/2}(\R^3)$, where $T_*\in(0, +\infty]$ is the maximal time of existence.

\begin{lemma}[Estimates on the linear system]\label{Lem.linear}
	Assume that  $\|\nabla u\|_{L^4(0,T_*;L^2(\R^3))}\leq 1$. Then the following linear system for $(v, \nabla\pi)$
	\begin{equation}\label{Eq.linear_NS}
		\begin{cases}
			\rho(\pa_tv+u\cdot\nabla v)-\Delta v+\nabla \pi=0,\\
			\dive v=0,\\
			v|_{t=0}=v_0\in H^{1}(\R^3)
		\end{cases}
	\end{equation}
	admits a unique solution $v\in C([0, T_*); H^1(\R^3))$ 
on $[0,T_*)\times \R^3$ with the following estimates 
	\begin{align}
		\sup_{t\in[0,T_*)}\|\sqrt\rho v(t)\|_{L^2(\R^3)}+\|\nabla v\|_{L^2(0,T_*;L^2(\R^3))}&\leq C\|v_0\|_{L^2(\R^3)},\label{Eq.linear_est1}\\
		\sup_{t\in[0,T_*)}\|\nabla v(t)\|_{L^2(\R^3)}+\|\sqrt\rho\pa_tv\|_{L^2(0,T_*;L^2(\R^3))}&\leq C\|\nabla v_0\|_{L^2(\R^3)},\label{Eq.linear_est3}\\
       \sup_{t\in[0,T_*)}\|\sqrt t\nabla v\|_{ L^2(\R^3)}+\|\sqrt{t\rho}\pa_tv\|_{L^2(0,T_*;L^2(\R^3))}&\leq C\|v_0\|_{L^2(\R^3)},\label{Eq.linear_est2}\\
		\|t^{-\alpha}\nabla v\|_{L^2(0,T_*;L^2(\R^3))}\leq C_\alpha \|v_0\|_{L^2(\R^3)}^{1-2\alpha}&\|\nabla v_0\|_{L^2(\R^3)}^{2\alpha},\quad\forall\ \alpha\in(0,1/2),\label{Eq.linear_est4}
	\end{align}
	where $C>0$ is a constant depending only on $\|\rho_0\|_{L^{\infty}}$, and $C_\alpha>0$ depends only on $\|\rho_0\|_{L^{\infty}}$ and $\alpha\in(0,1/2)$.
\end{lemma}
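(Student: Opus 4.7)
The four bounds form a standard hierarchy---$L^2$-energy, $H^1$-energy, their time-weighted refinements, and an interpolation---while existence and uniqueness for the linear system \eqref{Eq.linear_NS} follow classically from the a priori estimates below via a Galerkin or Friedrichs approximation, giving $v\in C([0,T_*);H^1)$. For \eqref{Eq.linear_est1}, I test the momentum equation with $v$: using $\mathrm{div}\,u=\mathrm{div}\,v=0$ together with the continuity equation $\pa_t\rho+u\cdot\na\rho=0$, the transport and pressure contributions cancel, leaving $\frac12\frac{d}{dt}\int\rho|v|^2\,\dx+\|\na v\|_{L^2}^2=0$, which integrates to \eqref{Eq.linear_est1}.

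The main work is \eqref{Eq.linear_est3}. Testing with $\pa_t v$ and using $\mathrm{div}\,\pa_t v=0$ yields
\begin{equation*}
\int\rho|\pa_t v|^2\,\dx+\frac12\frac{d}{dt}\|\na v\|_{L^2}^2=-\int\rho(u\cdot\na v)\cdot\pa_t v\,\dx,
\end{equation*}
and the right-hand side is controlled by $\|\rho_0\|_{L^\infty}^{1/2}\|\sqrt\rho\pa_t v\|_{L^2}\|u\|_{L^6}\|\na v\|_{L^3}$. Combining Sobolev $\|u\|_{L^6}\lesssim\|\na u\|_{L^2}$, Gagliardo-Nirenberg $\|\na v\|_{L^3}\lesssim\|\na v\|_{L^2}^{1/2}\|\na^2 v\|_{L^2}^{1/2}$, and the Stokes regularity
\begin{equation*}
\|\na^2 v\|_{L^2}+\|\na\pi\|_{L^2}\leq C\bigl(\|\sqrt\rho\pa_t v\|_{L^2}+\|u\cdot\na v\|_{L^2}\bigr)
\end{equation*}
applied to $-\D v+\na\pi=-\rho(\pa_t v+u\cdot\na v)$, a short absorption bootstrap delivers $\|\na^2 v\|_{L^2}\leq C\|\sqrt\rho\pa_t v\|_{L^2}+C\|\na u\|_{L^2}^2\|\na v\|_{L^2}$. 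Two Young's inequalities then collapse the energy identity to
\begin{equation*}
\int\rho|\pa_t v|^2\,\dx+\frac{d}{dt}\|\na v\|_{L^2}^2\leq C\|\na u\|_{L^2}^4\|\na v\|_{L^2}^2,
\end{equation*}
and the hypothesis $\|\na u\|_{L^4(0,T_*;L^2)}\leq 1$ allows Gr\"onwall to close \eqref{Eq.linear_est3}.

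The remaining bounds are consequences. For \eqref{Eq.linear_est2}, I multiply the displayed differential inequality by $t$ and use $t\frac{d}{dt}\|\na v\|_{L^2}^2=\frac{d}{dt}(t\|\na v\|_{L^2}^2)-\|\na v\|_{L^2}^2$; the extra forcing $\|\na v\|_{L^2}^2$ is time-integrable by \eqref{Eq.linear_est1}, so a further Gr\"onwall finishes. For \eqref{Eq.linear_est4}, I split $(0,T_*)=(0,t_0)\cup(t_0,T_*)$, use \eqref{Eq.linear_est3} against the integrable singularity $t^{-2\alpha}$ on the first interval, and pull $t_0^{-2\alpha}$ out of the second before invoking \eqref{Eq.linear_est1}; optimizing at $t_0\sim\|v_0\|_{L^2}^2/\|\na v_0\|_{L^2}^2$ reproduces the stated interpolation exponents. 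The principal technical hurdle is arranging the bootstrap in \eqref{Eq.linear_est3} so that the $L^4_tL^2_x$ smallness of $\na u$ appears as exactly the critical quantity closing Gr\"onwall in a scale-invariant fashion; after that, everything reduces to reweighting and elementary splitting.
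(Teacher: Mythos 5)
Your proposal is correct and follows essentially the same route as the paper: energy identity with $v$ for \eqref{Eq.linear_est1}; testing with $\pa_t v$, the Stokes estimate, and absorption to reach $\frac{\mathrm d}{\dt}\|\nabla v\|_{L^2}^2+\|\sqrt\rho\pa_tv\|_{L^2}^2\leq C\|\nabla u\|_{L^2}^4\|\nabla v\|_{L^2}^2$ with Gr\"onwall for \eqref{Eq.linear_est3}; multiplying that inequality by $t$ for \eqref{Eq.linear_est2}; and the split at $t_0=\|v_0\|_{L^2}^2\|\nabla v_0\|_{L^2}^{-2}$ for \eqref{Eq.linear_est4}. No gaps worth noting.
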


\if0
{\color{red}\begin{remark}
   We need to assume $0<\rho_*\leq \rho(t,x)\leq \|\rho_0\|_{L^{\infty}}$ for $t\in[0, T_*)$, $x\in\R^3$ for some constants $0<\rho_*<\|\rho_0\|_{L^{\infty}}$ 
   to guarantee the existence and uniqueness of a smooth solution to \eqref{Eq.linear_NS}. However, in all estimates, the constant $C>0$ is independent of this positive lower bound $\rho_*$. In fact the positive lower bound $\rho\geq \rho_*>0$ is also assumed in section 2.
\end{remark}}\fi 
\begin{proof}
	Taking $L^2$ inner product of the momentum equation of \eqref{Eq.linear_NS} with $v$ and using the transport equation of \eqref{INS}, one can show \eqref{Eq.linear_est1} along the same way as the proof of Proposition 2.1 in \cite{Zhang_Adv}. 
	
	Taking $L^2$ inner product of the momentum equation of \eqref{Eq.linear_NS} with $\pa_tv$ gives
	\begin{align}
		\frac12\frac{\mathrm d}{\dt}\|\nabla v(t)\|_{L^2}^2+\|\sqrt\rho\pa_tv(t)\|_{L^2}^2&=-\int_{\R^3}\rho(u\cdot\nabla v)\cdot\pa_tv(t,x)\,\dx\nonumber\\
		&\leq \|\rho_0\|_{L^\infty}^{1/2}\|u(t)\|_{L^6}\|\nabla v(t)\|_{L^3}\|\sqrt\rho\pa_tv(t)\|_{L^2}\nonumber\\
		&\leq C\|\nabla u(t)\|_{L^2}\|\nabla v(t)\|_{L^2}^{1/2}\|\nabla^2v(t)\|_{L^2}^{1/2}\|\sqrt\rho\pa_tv(t)\|_{L^2}.\label{Eq.inner-product_pa_tv}
	\end{align}
	Moreover, the Stokes estimate implies 
	\begin{align*}
		\|\nabla^2v(t)\|_{L^2}+\|\nabla \pi(t)\|_{L^2}&\leq C\left(\|\sqrt\rho\pa_tv(t)\|_{L^2}+\|\rho_0\|_{L^\infty}\|u\cdot\nabla v(t)\|_{L^2}\right)\\
		&\leq C\left(\|\sqrt\rho\pa_tv(t)\|_{L^2}+\|u(t)\|_{L^6}\|\nabla v(t)\|_{L^3}\right)\\
		&\leq C\left(\|\sqrt\rho\pa_tv(t)\|_{L^2}+\|\nabla u(t)\|_{L^2}\|\nabla v(t)\|_{L^2}^{1/2}\|\nabla^2v(t)\|_{L^2}^{1/2}\right).
	\end{align*}
	By Young's inequality, we have
	\[\|\nabla^2v(t)\|_{L^2}\leq C\left(\|\sqrt\rho\pa_tv(t)\|_{L^2}+\|\nabla u(t)\|_{L^2}^2\|\nabla v(t)\|_{L^2}\right),\quad\forall\ t\in[0, T_*).\]
	Thus, \eqref{Eq.inner-product_pa_tv} becomes
	\begin{align*}
		\frac12&\frac{\mathrm d}{\dt}\|\nabla v(t)\|_{L^2}^2+\|\sqrt\rho\pa_tv(t)\|_{L^2}^2\\
		&\leq C\|\nabla u(t)\|_{L^2}\|\nabla v(t)\|_{L^2}^{1/2}\|\sqrt\rho\pa_tv(t)\|_{L^2}\left(\|\sqrt\rho\pa_tv(t)\|_{L^2}^{1/2}+\|\nabla u(t)\|_{L^2}\|\nabla v(t)\|_{L^2}^{1/2}\right)\\
		&= C\|\nabla u(t)\|_{L^2}\|\nabla v(t)\|_{L^2}^{1/2}\|\sqrt\rho\pa_tv(t)\|_{L^2}^{3/2}+C\|\nabla u(t)\|_{L^2}^2\|\nabla v(t)\|_{L^2}\|\sqrt\rho\pa_tv(t)\|_{L^2}\\
		&\leq C\|\nabla u(t)\|_{L^2}^4\|\nabla v(t)\|_{L^2}^{2}+\frac12\|\sqrt\rho\pa_tv(t)\|_{L^2}^{2},
	\end{align*}
	where in the last inequality we have used Young's inequality. Hence, we have
	\begin{equation}\label{Eq.2.7}
		\frac{\mathrm d}{\dt}\|\nabla v(t)\|_{L^2}^2+\|\sqrt\rho\pa_tv(t)\|_{L^2}^2\leq C\|\nabla u(t)\|_{L^2}^4\|\nabla v(t)\|_{L^2}^{2},\quad\forall\ t\in[0, T_*).
	\end{equation}
	Then it follows from Gr\"onwall's inequality that
	\begin{equation*}
		\|\nabla v(t)\|_{L^2}+\int_0^t\|\sqrt\rho\pa_tv(s)\|_{L^2}^2\,\mathrm ds\leq\exp\left(C\int_0^t\|\nabla u\|_{L^2}^4\,\ds\right)\|\nabla v_0\|_{L^2}^2,\quad\forall\ t\in[0, T_*),
	\end{equation*}
	which gives \eqref{Eq.linear_est3}.
	
	Multiplying both sides of \eqref{Eq.2.7} by $t$, we obtain
	\begin{equation*}
		\frac{\mathrm d}{\dt}(t\|\nabla v(t)\|_{L^2}^2)+t\|\sqrt\rho\pa_tv(t)\|_{L^2}^2\leq\|\nabla v(t)\|_{L^2}^2+Ct\|\nabla u(t)\|_{L^2}^4\|\nabla v(t)\|_{L^2}^{2},\quad\forall\ t\in[0, T_*).
	\end{equation*}
	Then it follows from Gr\"onwall's inequality and \eqref{Eq.linear_est1} that
	\begin{equation*}
		t\|\nabla v(t)\|^2_{L^2}+\int_0^t\|\sqrt{s\rho}\pa_tv(s)\|_{L^2}^2\,\mathrm ds\leq\exp\left(C\|\nabla u\|_{L^4(0,t;L^2)}^4\right)\|\nabla v\|_{L^2(0,t;L^2)}^2\leq C\|v_0\|_{L^2}^2
	\end{equation*}
	for $t\in[0, T_*)$, which proves \eqref{Eq.linear_est2}.
	
	Finally, we prove \eqref{Eq.linear_est4}. It suffices to show that
	\begin{equation}\label{Eq.linear_est4claim}
		\int_0^ts^{-2\alpha}\|\nabla v(s)\|_{L^2}^2\,\mathrm ds\leq C_\alpha\|v_0\|_{L^2}^{2-4\alpha}\|\nabla v_0\|_{L^2}^{4\alpha},\quad\forall\ t\in[0, T_*).
	\end{equation}
\if0	Taking $L^2$ inner product of the momentum equation of \eqref{Eq.linear_NS} with $v$ and using the transport equation of \eqref{INS}, we have
	\begin{equation}\label{Eq.energy_identity}
		\frac{\mathrm d}{\dt}\|\sqrt\rho v(t)\|_{L^2}^2+2\|\nabla v(t)\|_{L^2}^2=0,\quad\forall\ t\in[0, T_*).
	\end{equation}
	Let $f(t):=\|\sqrt\rho v(t)\|_{L^2}^2-\|\sqrt{\rho_0} v_0\|_{L^2}^2$ for $t\in[0, T_*)$. Hence
	\begin{equation}\label{Eq.2.10}
	(t^{-2\alpha}f)'(t)+2t^{-2\alpha}\|\nabla v(t)\|_{L^2}^2=-2\alpha t^{-1-2\alpha}f(t),\quad\forall\ t\in(0, T_*).
	\end{equation}
	Moreover, by {\color{red}\eqref{Eq.energy_identity}} we have $|f(t)|\leq \color{red}\|\sqrt{\rho_0}v_0\|_{L^2}^2\leq C\|v_0\|_{L^2}^2$; by \eqref{Eq.linear_est3} and \eqref{Eq.energy_identity} we have $|f(t)|\leq t\|\nabla v\|_{L_t^\infty L^2}^2\leq Ct\|\nabla v_0\|_{L^2}^2$, thus
	\begin{equation*}
		|f(t)|\leq C\min\{\|v_0\|_{L^2}^2, t\|\nabla v_0\|_{L^2}^2\},\quad\forall\ t\in(0, T_*).
	\end{equation*}
	Integrating \eqref{Eq.2.10} implies that
	\begin{equation*}
		\int_0^ts^{-2\alpha}\|\nabla v(s)\|_{L^2}^2\,\mathrm ds\leq C\frac{|f(t)|}{t^{2\alpha}}+C\int_0^t\frac{|f(s)|}{s^{2\alpha+1}}\,\mathrm ds,\quad\forall\ t\in[0, T_*).
	\end{equation*}\fi
    Let $t_0:=\|v_0\|_{L^2}^2\|\nabla v_0\|_{L^2}^{-2}$. If $0\leq t\leq t_0$, then we get by \eqref{Eq.linear_est3}  that 
    \begin{align*}
        \int_0^ts^{-2\alpha}\|\nabla v(s)\|_{L^2}^2\,\mathrm ds&\leq 
        C\int_0^ts^{-2\alpha}\|\nabla v_0\|_{L^2}^2\,\mathrm ds\leq C_\alpha t^{1-2\alpha}\|\nabla v_0\|_{L^2}^2\\
        &\leq C_\alpha t_0^{1-2\alpha}\|\nabla v_0\|_{L^2}^2=C_\alpha \|v_0\|_{L^2}^{2-4\alpha}\|\nabla v_0\|_{L^2}^{4\alpha}.
    \end{align*}
    If $t\geq t_0$, then by \eqref{Eq.linear_est1} and \eqref{Eq.linear_est3}, we have
    \begin{align*}
        \int_0^ts^{-2\alpha}\|\nabla v(s)\|_{L^2}^2\,\mathrm ds&\leq 
        C\int_0^{t_0}s^{-2\alpha}\|\nabla v_0\|_{L^2}^2\,\mathrm ds+t_0^{-2\alpha}\int_{t_0}^t\|\nabla v(s)\|_{L^2}^2\,\mathrm ds\\
        &\leq         C_\alpha t_0^{1-2\alpha}\|\nabla v_0\|_{L^2}^2+C t_0^{-2\alpha}\|v_0\|_{L^2}^2=(C+C_\alpha) \|v_0\|_{L^2}^{2-4\alpha}\|\nabla v_0\|_{L^2}^{4\alpha}.
    \end{align*}
    This proves \eqref{Eq.linear_est4claim}, and thus completes the proof of \eqref{Eq.linear_est4}.
\end{proof}

\begin{lemma}\label{Lem.low_order_est}
	There exists a constant $\varepsilon_0\in(0,1)$ depending only on $\|\rho_0\|_{L^\infty}$ such that if $\|u_0\|_{\dot H^{1/2}(\R^3)}<\varepsilon_0$, then we have 
	\begin{equation}\label{Eq.low_order_est}
		\|t^{1/4}\nabla u\|_{L^\infty(0,T_*;L^2(\R^3))}+\|t^{-1/4}\nabla u\|_{L^2(0,T_*;L^2(\R^3))}\leq C\|u_0\|_{\dot H^{1/2}(\R^3)}
	\end{equation}
	for some constant $C>0$ depending only on $\|\rho_0\|_{L^\infty}$.
\end{lemma}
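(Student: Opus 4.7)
The plan is a continuity argument combined with a comparison against the heat extension $u_L := e^{t\Delta}u_0$. Set
$$\Phi(T) := \|t^{1/4}\nabla u\|_{L^\infty(0,T;L^2(\R^3))} + \|t^{-1/4}\nabla u\|_{L^2(0,T;L^2(\R^3))}.$$
By H\"older's inequality,
$$\|\nabla u\|_{L^4(0,T;L^2)}^2 \leq \|t^{1/4}\nabla u\|_{L^\infty(0,T;L^2)}\cdot\|t^{-1/4}\nabla u\|_{L^2(0,T;L^2)}\leq\Phi(T)^2,$$
so whenever $\Phi(T)\leq 1$, Lemma~\ref{Lem.linear} applies with $v=u$. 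For a smooth solution, $\Phi$ is continuous in $T\in[0,T_*)$ with $\Phi(0^+)=0$ (by dominated convergence applied to the Fourier representation of $u_L$ and comparison of $u$ to $u_L$ near $t=0$), so a continuity argument is available.

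Plancherel's identity together with $\|u_0\|_{\dot H^{1/2}}^2 = \int|\xi||\widehat{u_0}|^2\,\mathrm d\xi$ yields
$$\|t^{1/4}\nabla u_L\|_{L^\infty(\R^+;L^2)} + \|t^{-1/4}\nabla u_L\|_{L^2(\R^+;L^2)} + \|t^{1/4}\Delta u_L\|_{L^2(\R^+;L^2)}\leq C\|u_0\|_{\dot H^{1/2}}.$$
Using $\pa_t u_L = \Delta u_L$, the difference $w := u - u_L$ solves
$$\rho\pa_t w + \rho u\cdot\nabla w - \Delta w + \nabla P = -(\rho-1)\Delta u_L - \rho u\cdot\nabla u_L, \quad \dive w=0,\quad w|_{t=0}=0.$$

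I would then run a weighted energy estimate for this $w$-system, paralleling the proofs of \eqref{Eq.linear_est2}--\eqref{Eq.linear_est4}, but now with the source $F := -(\rho-1)\Delta u_L - \rho u\cdot\nabla u_L$. The convective source $\rho u\cdot\nabla u_L$ is controlled by H\"older, the Sobolev embedding $L^6\hookrightarrow\dot H^1$, and the bootstrap $\|\nabla u\|_{L^4(0,T;L^2)}\leq\Phi(T)\leq 1$, producing a quadratic contribution of order $\Phi(T)\|u_0\|_{\dot H^{1/2}}$. The density-variation source $(\rho-1)\Delta u_L$ is not small, but is handled via the integrated weighted bound on $\Delta u_L$ above, combined with the Stokes-type smoothing $\|\nabla^2 w\|_{L^2}\lesssim\|\sqrt\rho\pa_t w\|_{L^2}+\|\nabla u\|_{L^2}^2\|\nabla w\|_{L^2}$ from the proof of Lemma~\ref{Lem.linear}. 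Altogether this gives
$$\Phi(T)\leq C_*\|u_0\|_{\dot H^{1/2}} + C_*\Phi(T)^2,$$
with $C_*$ depending only on $\|\rho_0\|_{L^\infty}$. Choosing $\varepsilon_0$ so small that $4C_*^2\varepsilon_0 < 1$ and running the standard continuity argument yields $\Phi(T)\leq 2C_*\|u_0\|_{\dot H^{1/2}}$ on $[0,T_*)$, which is \eqref{Eq.low_order_est}.

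The main obstacle will be handling the source $(\rho-1)\Delta u_L$, which is not small (since $\rho-1$ is only bounded in $L^\infty$) and for which the pointwise bound $\|\Delta u_L(t)\|_{L^2}\lesssim t^{-3/4}\|u_0\|_{\dot H^{1/2}}$ is too singular to integrate with the natural weight. The saving grace is the integrated identity $\int_0^\infty t^{1/2}\|\Delta u_L\|_{L^2}^2\,\mathrm dt\lesssim\|u_0\|_{\dot H^{1/2}}^2$ provided by Plancherel; making the weight on this term match exactly with the Stokes smoothing for $w$ at the $\dot H^{1/2}$ scaling is the technical heart of the proof.
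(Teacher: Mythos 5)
Your setup is fine as far as it goes: the Plancherel bounds for $u_L=e^{t\Delta}u_0$ are correct, the equation for $w=u-u_L$ is derived correctly, and the bootstrap structure $\Phi(T)\leq C_*\|u_0\|_{\dot H^{1/2}}+C_*\Phi(T)^2$ would indeed yield the lemma. But the decisive step — actually closing the weighted energy estimates for $w$ with the source $(\rho-1)\Delta u_L$ — is exactly what you do not carry out, and it is not a routine repetition of the proofs of \eqref{Eq.linear_est3}--\eqref{Eq.linear_est4}. Two concrete obstructions. First, to get $\|t^{1/4}\nabla w\|_{L^\infty L^2}$ you must test against $\pa_t w$, and the part of the source not carrying a factor $\rho$ produces $\int_{\R^3}\Delta u_L\cdot\pa_t w\,\dx$; since the constants must be independent of the lower bound $\rho_*$ (vacuum is allowed, and $\rho$ is in no sense close to $1$), $\|\pa_t w\|_{L^2}$ is \emph{not} controlled by $\|\sqrt\rho\,\pa_t w\|_{L^2}$, so this term cannot be absorbed as in Lemma \ref{Lem.linear}; one is forced into extra integrations by parts in time and space, Stokes estimates for $\Delta w$, and the resulting terms (e.g.\ those with weight $s^{-1/2}$ against $\|\nabla u_L\|_{L^2}\|\nabla w\|_{L^2}$ or $\|\Delta u_L\|_{L^2}\|\sqrt\rho w\|_{L^2}$) all sit exactly at the critical weight. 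Second, and more structurally, the bound you want is an endpoint estimate: the linear estimate \eqref{Eq.linear_est4} only holds for $\alpha<1/2$ strictly, with constant degenerating as $\alpha\to1/2$, and a single weighted energy inequality for one function at the $\dot H^{1/2}$ scaling runs head-on into this borderline. Your last paragraph concedes that making the weights match "is the technical heart of the proof"; as written, that heart is asserted, not proved, so the lemma is not established.

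For contrast, the paper sidesteps the endpoint entirely by a superposition trick: decompose $u_0=\sum_j\Delta_j u_0$ and solve, for each $j$, the system \eqref{Eq.u_j_eq} with the \emph{same} transport field $u$ and density $\rho$; since that system is linear in the unknown, uniqueness gives $u=\sum_j u_j$, and Lemma \ref{Lem.linear} applied to each block (with both the $L^2$ and $\dot H^1$ data bounds, and the off-endpoint weights $\alpha=1/8,\,3/8$) is then summed using the $\ell^2$ coefficients $c_j$ and the almost-orthogonality gain $2^{-|k-j|(1/2-2\varepsilon)}$; the bootstrap via $T^*$ in \eqref{Eq.T^*} closes exactly as in your last step. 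If you want to salvage your heat-flow comparison, you would need to exhibit, uniformly in $\rho_*$ and in $T$, a coupled closure of the $t^{-1/2}$- and $t^{1/2}$-weighted energy inequalities for $w$ handling the vacuum issue above; without that, the frequency decomposition (or some substitute for it) is the missing idea.
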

\begin{proof}

    We denote
    \begin{equation}\label{Eq.T^*}
        T^*:=\sup\{t\in(0, T_*): \|\nabla u\|_{L^4(0, t; L^2)}\leq 1\}.
    \end{equation}
    Then we prove \eqref{Eq.low_order_est} on $(0, T^*)$.
	For each $j\in\Z$, we consider the following coupled system\footnote{In the proof of Lemma \ref{Lem.low_order_est} and Lemma \ref{Lem.L^3_est}, we use the same notations as in \cite{Zhang_Adv}. In particular, the definition of $\Delta_j$ can be found in Appendix A of \cite{Zhang_Adv}, and $(c_j)$ denotes a generic element of $\ell^2(\Z)$ such that $c_j\geq 0$ and $\sum_{j\in\Z}c_j^2=1$. Similarly for $(\tilde c_k)$ and $(\tilde c_k')$.} of $(u_j, \nabla P_j)$:
	 \begin{equation}\label{Eq.u_j_eq}
	 	\begin{cases}
	 		\rho(\pa_tu_j+u\cdot\nabla u_j)-\Delta u_j+\nabla P_j=0,\\
	 		\dive u_j=0,\\
	 		u_j|_{t=0}=\Delta_ju_0.
	 	\end{cases}
	 \end{equation}
	 Then it follows from the uniqueness of local smooth solution to \eqref{INS} that
    \begin{equation}\label{Eq.u=sum_u_j}
        u=\sum_{j\in\Z}u_j,\qquad \nabla P=\sum_{j\in\Z}\nabla P_j.
    \end{equation}
	 By Lemma \ref{Lem.linear}, there exists a constant $C>0$ such that for all $j\in\Z$, we have
	 \begin{align*}
	 	\|\nabla u_j\|_{L^\infty(0, T^*; L^2)}&\leq C\|\nabla \Delta_ju_0\|_{L^2}\leq C2^{j/2}c_j\|u_0\|_{\dot H^{1/2}},\\
	 	\|\sqrt{t}\nabla u_j\|_{L^\infty(0, T^*; L^2)}&\leq C\|\Delta_ju_0\|_{L^2}\leq C2^{-j/2}c_j\|u_0\|_{\dot H^{1/2}}.
	 \end{align*}
     Hence, for $t\in[0, T^*)$ we obtain
	 \begin{align*}
	 	\sqrt t\|\nabla u(t)\|_{L^2}^2&=\sqrt t\int_{\R^3}|\nabla u(t,x)|^2\,\dx
	 	=\sum_{j,k\in\Z}\int_{\R^3}\sqrt{t}\nabla u_j(t,x)\cdot\nabla u_k(t,x)\,\dx\\
	 	&\leq 2\sum_{k\in\Z}\sum_{j\leq k}\sqrt{t}\|\nabla u_k(t)\|_{L^2}\|\nabla u_j(t)\|_{L^2}\\
	 	&\leq C\sum_{k\in\Z}2^{-k/2}c_k\sum_{j\leq k}2^{j/2}c_j\|u_0\|_{\dot H^{1/2}}^2\leq C\sum_{k\in\Z}c_k\left(\sum_{j\leq k}2^{(j-k)/2}c_j\right)\|u_0\|_{\dot H^{1/2}}^2\\
	 	&\leq C\sum_{k\in\Z}c_k\tilde c_k\|u_0\|_{\dot H^{1/2}}^2\leq C\|u_0\|_{\dot H^{1/2}}^2.
	 \end{align*}
	 This shows that
	 \begin{equation}\label{Eq.low_order_est1}
	 	\|t^{1/4}\nabla u\|_{L^\infty(0,T^*; L^2)}\leq C\|u_0\|_{\dot H^{1/2}}.
	 \end{equation}
	 
	 Taking  $\varepsilon=1/8\in(0,1/4)$, Lemma \ref{Lem.linear} implies 
	 \begin{align*}
	 	\|t^{-(1/2-\varepsilon)}\nabla u_j\|_{L^2(0, T^*; L^2)}&\leq C\|\Delta_ju_0\|_{L^2}^{2\varepsilon}\|\nabla\Delta_ju_0\|_{L^2}^{1-2\varepsilon}\leq C2^{j(1/2-2\varepsilon)}c_j\|u_0\|_{\dot H^{1/2}},\\
	 	\|t^{-\varepsilon}\nabla u_j\|_{L^2(0, T^*; L^2)}&\leq C\|\Delta_ju_0\|_{L^2}^{1-2\varepsilon}\|\nabla\Delta_ju_0\|_{L^2}^{2\varepsilon}\leq C2^{j(2\varepsilon-1/2)}c_j\|u_0\|_{\dot H^{1/2}}
	 \end{align*}
	 for all $j\in\Z$. Hence, we obtain
	 \begin{align*}
	 	\int_0^{T^*}&t^{-1/2}\int_{\R^3}|\nabla u(t,x)|^2\,\dx\,\dt\leq \sum_{j,k\in\Z}\int_0^{T^*}t^{-1/2}\|\nabla u_j(t)\|_{L^2}\|\nabla u_k(t)\|_{L^2}\,\dt\\
	 	&\leq  2\sum_{k\in\Z}\sum_{j\leq k}\int_0^{T^*}\|t^{-(1/2-\varepsilon)}\nabla u_j(t)\|_{L^2}\|t^{-\varepsilon}\nabla u_k(t)\|_{L^2}\,\dt\\
	 	&\leq C\sum_{k\in\Z}2^{k(2\varepsilon-1/2)}c_k\sum_{j\leq k}2^{j(1/2-2\varepsilon)}c_j\|u_0\|_{\dot H^{1/2}}^2\\
	 	&\leq C\sum_{k\in\Z}c_k\left(\sum_{j\in\Z}2^{-|k-j|(1/2-2\varepsilon)}c_j\right)\|u_0\|_{\dot H^{1/2}}^2\\
	 	&\leq C\sum_{k\in\Z}c_k\tilde c_k'\|u_0\|_{\dot H^{1/2}}^2\leq C\|u_0\|_{\dot H^{1/2}}^2.
	 \end{align*}
	 This shows that
	 \begin{equation}\label{Eq.low_order_est2}
	 	\|t^{-1/4}\nabla u\|_{L^2(0, T^*; L^2)}\leq C\|u_0\|_{\dot H^{1/2}}.
	 \end{equation}

    It follows from \eqref{Eq.low_order_est1} and \eqref{Eq.low_order_est2} that
    \begin{equation}\label{Eq.nabla_u_bootstrap}
        \|\nabla u\|_{L^4(0, T^*; L^2)}\leq C^*\|u_0\|_{\dot H^{1/2}}
    \end{equation}
    for some constant $C^*>0$ depending only on $\|\rho_0\|_{L^\infty}$. Now we take $\varepsilon_0:=\min\{1/(2C^*), 1/2\}\in(0,1)$. Thus, if $\|u_0\|_{\dot H^{1/2}}<\varepsilon_0$, then by \eqref{Eq.T^*}, \eqref{Eq.nabla_u_bootstrap} and a continuity argument, we have $T^*=T_*$, and hence \eqref{Eq.low_order_est} follows from \eqref{Eq.low_order_est1} and \eqref{Eq.low_order_est2}.
\end{proof}

\begin{remark}\label{Rmk.global}
    By Lemma \ref{Lem.low_order_est}, if $\|u_0\|_{\dot H^{1/2}}<\varepsilon_0$, then $\|t^{1/4}\nabla u\|_{L^\infty(0, T_*; L^2)}\leq C\|u_0\|_{\dot H^{1/2}}$, hence we deduce from the classical theory of inhomogeneous Navier-Stokes equations that $T_*=+\infty$, i.e., the smooth solution $(\rho, u)$ to \eqref{INS} is globally defined. As a consequence, in this subsection, we have $T_*=+\infty$ and
    \begin{equation}\label{Eq.nabla_u_L4L2}
    	\|\nabla u\|_{L^4(\R^+; L^2)}<1.
    \end{equation}
\end{remark}

\begin{lemma}[Critical estimates]\label{Lem.L^3_est}
Let $\varepsilon_0\in(0,1)$ be given by Lemma \ref{Lem.low_order_est}. If $\|u_0\|_{\dot H^{1/2}(\R^3)}<\varepsilon_0$, then we have
    \begin{equation}\label{Eq.rhou_L3}
        \|\sqrt\rho u\|_{L^\infty(\R^+; L^3(\R^3))}\leq C\|u_0\|_{\dot H^{1/2}(\R^3)}
    \end{equation}
    for some constant $C>0$ depending only on $\|\rho_0\|_{L^{\infty}}$.
\end{lemma}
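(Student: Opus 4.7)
The plan is to extend the dyadic decomposition used in the proof of Lemma \ref{Lem.low_order_est} to the critical $L^3$ norm of $\sqrt\rho u$. By Remark \ref{Rmk.global}, the smallness assumption gives $T_*=+\infty$ and $\|\nabla u\|_{L^4(\R^+;L^2)}<1$, so the linear estimates of Lemma \ref{Lem.linear} are available on the whole of $\R^+$. Writing $u=\sum_{j\in\Z}u_j$ as in \eqref{Eq.u=sum_u_j}, where $u_j$ solves \eqref{Eq.u_j_eq} with initial data $\Delta_j u_0$, I would expand
\[\|\sqrt\rho u(t)\|_{L^3}^2=\|\rho|u|^2\|_{L^{3/2}}\leq 2\sum_{j\leq k}\|\rho\,u_j\otimes u_k\|_{L^{3/2}}\]
and bound each bilinear term by a paraproduct-type estimate.

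For a fixed pair $j\leq k$, Hölder's inequality provides two complementary bounds,
\[\|\rho\,u_j\otimes u_k\|_{L^{3/2}}\leq\min\bigl\{\|\sqrt\rho u_j\|_{L^2}\|\sqrt\rho u_k\|_{L^6},\ \|\sqrt\rho u_j\|_{L^6}\|\sqrt\rho u_k\|_{L^2}\bigr\}.\]
Combining Lemma \ref{Lem.linear} with $\rho\leq\|\rho_0\|_{L^\infty}$ and the Sobolev embedding $\dot H^1(\R^3)\hookrightarrow L^6(\R^3)$ yields
\[\|\sqrt\rho u_j(t)\|_{L^2}\leq Cc_j 2^{-j/2}\|u_0\|_{\dot H^{1/2}}\andf\|\sqrt\rho u_j(t)\|_{L^6}\leq Cc_j\min\{2^{j/2},\,t^{-1/2}2^{-j/2}\}\|u_0\|_{\dot H^{1/2}},\]
and a short case analysis based on whether $2^j,2^k$ are above or below $t^{-1/2}$ shows that, for $j\leq k$,
\[\|\rho\,u_j\otimes u_k\|_{L^{3/2}}\leq Cc_jc_k\|u_0\|_{\dot H^{1/2}}^2\cdot 2^{-(j+k)/2}\min\{2^j,t^{-1/2}\}.\]

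The remaining task is to prove $S(t):=\sum_{j\leq k}c_jc_k\, 2^{-(j+k)/2}\min\{2^j,t^{-1/2}\}\leq C$ uniformly in $t>0$. With $J_0:=\lfloor\log_2 t^{-1/2}\rfloor$, the subsum over $j\leq J_0$ is $\sum_{j\leq J_0,\,k\geq j}c_jc_k 2^{(j-k)/2}$, which, after the substitution $m=k-j\geq 0$, is bounded by $\sum_{m\geq 0}2^{-m/2}\sum_{j\in\Z}c_jc_{j+m}\leq C$ via the Cauchy–Schwarz bound $\sum_jc_jc_{j+m}\leq\|c\|_{\ell^2}^2=1$. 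The subsum over $j>J_0$ is $\sum_{J_0<j\leq k}c_jc_k 2^{J_0-(j+k)/2}$; shifting to $j'=j-J_0, k'=k-J_0\geq 1$ converts it into $\sum_{1\leq j'\leq k'}c_{J_0+j'}c_{J_0+k'}2^{-(j'+k')/2}\leq\bigl(\sum_{j'\geq 1}c_{J_0+j'}2^{-j'/2}\bigr)^2\leq C$, again by Cauchy–Schwarz. Combining the two contributions yields \eqref{Eq.rhou_L3}.

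The main obstacle is that $(c_j)$ is only $\ell^2$, not $\ell^1$: the naive bound $\sum_j\|\sqrt\rho u_j(t)\|_{L^3}$ diverges, so one cannot pass the $L^3$ norm inside the series directly. Taking the minimum of the two Hölder bounds produces the symmetric geometric weight $2^{-|k-j|/2}$ together with a threshold factor at scale $t^{-1/2}$, which converts the problematic single sum into convolution-type double sums that are summable for $\ell^2$ sequences. This is precisely the mechanism that accommodates the critical endpoint $\dot H^{1/2}=\dot B^{1/2}_{2,2}$ rather than the $\dot B^{1/2}_{2,1}$ setting of \cite{Zhang_Adv}.
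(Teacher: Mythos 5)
Your proposal is correct and follows essentially the same route as the paper: decompose $u=\sum_j u_j$ via the linear systems \eqref{Eq.u_j_eq}, expand $\|\sqrt\rho u(t)\|_{L^3}^2$ bilinearly, pair the lower-frequency factor in $L^6$ (via Lemma \ref{Lem.linear} and Sobolev embedding) with the higher-frequency factor in $L^2$, and sum using the off-diagonal decay and Cauchy--Schwarz for the $\ell^2$ sequence $(c_j)$. The only difference is that your time-dependent refinement $\|\sqrt\rho u_j(t)\|_{L^6}\leq Cc_jt^{-1/2}2^{-j/2}\|u_0\|_{\dot H^{1/2}}$ from \eqref{Eq.linear_est2} and the ensuing splitting at $J_0$ are unnecessary: the time-uniform bound already yields the weight $2^{-(k-j)/2}$ for $j\leq k$, which is exactly how the paper concludes.
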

\begin{proof}
    Consider the system \eqref{Eq.u_j_eq} for $(u_j, \nabla P_j)$, then we have \eqref{Eq.u=sum_u_j}. Lemma \ref{Lem.linear} implies 
    \begin{align*}
        \|\sqrt\rho u_j\|_{L^\infty(\R^+;L^2)}&\leq C\|\Delta_j u_0\|_{L^2}\leq C2^{-j/2}c_j\|u_0\|_{\dot H^{1/2}},\\
        \|\nabla u_j\|_{L^\infty(\R^+;L^2)}&\leq C\|\nabla \Delta_j u_0\|_{L^2}\leq C2^{j/2}c_j\|u_0\|_{\dot H^{1/2}},\quad\forall\ j\in\Z.
    \end{align*}
    Thus, we have
    \begin{align*}
        &\|\sqrt\rho u_j\|_{L^\infty(\R^+;L^6)}\leq 
        C\|\nabla u_j\|_{L^\infty(\R^+;L^2)}\leq C2^{j/2}c_j\|u_0\|_{\dot H^{1/2}},\quad\forall\ j\in\Z.
    \end{align*}
    As a consequence, there holds for all $t\in\R^+$,
    \begin{align*}
        &\|\sqrt\rho u(t)\|_{L^3}^2=\|\rho |u|^2(t)\|_{L^{3/2}}
        \leq \sum_{i,j\in\Z}\|\rho |u_iu_j|(t)\|_{L^{3/2}}\\
        &\leq 2\sum_{i\in\Z}\sum_{j\leq i}\|\sqrt\rho u_i(t)\|_{L^2}\|\sqrt\rho u_j(t)\|_{L^6}
        \leq C\sum_{i\in\Z}\sum_{j\leq i}2^{-i/2}c_i2^{j/2}c_j\|u_0\|_{\dot H^{1/2}}^2\\
        &\leq C\sum_{i\in\Z}c_i\left(\sum_{j\leq i}2^{-(i-j)/2}c_j\right)
        \|u_0\|_{\dot H^{1/2}}^2\leq C\sum_{i\in\Z}c_i\tilde c_i\|u_0\|_{\dot H^{1/2}}^2
        \leq C\|u_0\|_{\dot H^{1/2}}^2.
    \end{align*}
    This completes the proof of \eqref{Eq.rhou_L3}.
\end{proof}

\begin{lemma}
	Let $\varepsilon_0\in(0,1)$ be given by Lemma \ref{Lem.low_order_est}. If $\|u_0\|_{\dot H^{1/2}(\R^3)}<\varepsilon_0$, then
	\begin{equation}\label{Eq.low_order_est_rhou}
		\|t^{1/4}\nabla u\|_{L^\infty(\R^+;L^2(\R^3))}+\|t^{1/4}(\sqrt\rho\dot u, \nabla^2u)\|_{L^2(\R^+;L^2(\R^3))}\leq C\|u_0\|_{\dot H^{1/2}(\R^3)}
	\end{equation}
	for some constant $C>0$ depending only on $\|\rho_0\|_{L^{\infty}}$.
\end{lemma}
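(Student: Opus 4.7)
The first bound $\|t^{1/4}\nabla u\|_{L^\infty(\R^+;L^2)}\leq C\|u_0\|_{\dot H^{1/2}}$ is already contained in Lemma \ref{Lem.low_order_est}, so the plan reduces to proving the two $L^2(\R^+;L^2)$ estimates. The strategy is to derive a differential inequality for $\|\nabla u(t)\|_{L^2}^2$ driven by $\|\sqrt\rho\dot u(t)\|_{L^2}^2$, then close it with the weight $t^{1/2}$ using the already established bounds from Lemma \ref{Lem.low_order_est}.

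First, I would take the $L^2$ inner product of the momentum equation $\rho\dot u-\Delta u+\nabla P=0$ with $\dot u$. Using $\dive u=0$ (so that $\int\nabla P\cdot\pa_t u\,\dx=0$) and the identity $\dive(u\cdot\nabla u)=\pa_i u^k\pa_k u^i$, integration by parts should yield
\begin{equation*}
\frac{1}{2}\frac{\mathrm d}{\dt}\|\nabla u\|_{L^2}^2+\|\sqrt\rho\dot u\|_{L^2}^2 = -\int_{\R^3}\pa_j u^i\,\pa_j u^k\,\pa_k u^i\,\dx+\int_{\R^3}P\,\pa_i u^k\,\pa_k u^i\,\dx.
\end{equation*}
For the trilinear advective term I would interpolate $\|\nabla u\|_{L^3}\leq \|\nabla u\|_{L^2}^{1/2}\|\nabla u\|_{L^6}^{1/2}$ and invoke the Sobolev embedding $\dot H^1\hookrightarrow L^6$ together with the Stokes estimate
\begin{equation*}
\|\nabla^2 u\|_{L^2}+\|\nabla P\|_{L^2}\leq C\|\rho\dot u\|_{L^2}\leq C\|\sqrt\rho\dot u\|_{L^2},
\end{equation*}
which follows from $-\Delta u+\nabla P=-\rho\dot u$, $\dive u=0$ and $0\leq\rho\leq\|\rho_0\|_{L^\infty}$. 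The pressure term can be treated analogously using $\|P\|_{L^6}\leq C\|\nabla P\|_{L^2}$ and the Gagliardo--Nirenberg bound $\|\nabla u\|_{L^{12/5}}\leq C\|\nabla u\|_{L^2}^{3/4}\|\nabla u\|_{L^6}^{1/4}$. Both contributions are then bounded by $C\|\nabla u\|_{L^2}^{3/2}\|\sqrt\rho\dot u\|_{L^2}^{3/2}$, and weighted Young's inequality yields
\begin{equation*}
\frac{\mathrm d}{\dt}\|\nabla u\|_{L^2}^2+\|\sqrt\rho\dot u\|_{L^2}^2\leq C\|\nabla u\|_{L^2}^6.
\end{equation*}

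Next, I would multiply this inequality by $t^{1/2}$ and integrate over $[0,t]$, obtaining
\begin{equation*}
t^{1/2}\|\nabla u(t)\|_{L^2}^2+\int_0^t s^{1/2}\|\sqrt\rho\dot u(s)\|_{L^2}^2\,\ds\leq C\int_0^t s^{1/2}\|\nabla u\|_{L^2}^6\,\ds+\tfrac12\int_0^t s^{-1/2}\|\nabla u\|_{L^2}^2\,\ds.
\end{equation*}
The last integral is bounded by $\|t^{-1/4}\nabla u\|_{L^2(\R^+;L^2)}^2\leq C\|u_0\|_{\dot H^{1/2}}^2$ by Lemma \ref{Lem.low_order_est}. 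For the sixth-power integral I would split $s^{1/2}\|\nabla u\|_{L^2}^6=(s^{1/4}\|\nabla u\|_{L^2})^4\cdot s^{-1/2}\|\nabla u\|_{L^2}^2$ and pull the first factor out in $L^\infty_t$ via Lemma \ref{Lem.low_order_est}, so this integral is controlled by $C\|u_0\|_{\dot H^{1/2}}^4\cdot\|u_0\|_{\dot H^{1/2}}^2\leq C\varepsilon_0^4\|u_0\|_{\dot H^{1/2}}^2$. Combining these estimates gives $\|t^{1/4}\nabla u\|_{L^\infty(\R^+;L^2)}+\|t^{1/4}\sqrt\rho\dot u\|_{L^2(\R^+;L^2)}\leq C\|u_0\|_{\dot H^{1/2}}$, and the bound on $\|t^{1/4}\nabla^2 u\|_{L^2(\R^+;L^2)}$ then follows by applying the Stokes estimate pointwise in time and integrating.

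The main obstacle will be the second step: producing the common bound $C\|\nabla u\|_{L^2}^{3/2}\|\sqrt\rho\dot u\|_{L^2}^{3/2}$ for both the advective and pressure trilinear terms, so that Young's inequality absorbs them into the dissipative $\|\sqrt\rho\dot u\|_{L^2}^2$ on the left. Everything else is a weighted Gr\"onwall argument whose exponents match precisely the homogeneity of $\dot H^{1/2}$-data, and the smallness $\|u_0\|_{\dot H^{1/2}}<\varepsilon_0$ provides the slack needed to keep the nonlinear sixth-power term subcritical.
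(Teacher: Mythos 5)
Your proposal is correct and follows essentially the same route as the paper: test the momentum equation with $\dot u$, control the trilinear and pressure terms via the Stokes estimate $\|\nabla^2u\|_{L^2}+\|\nabla P\|_{L^2}\leq C\|\sqrt\rho\dot u\|_{L^2}$ and Young's inequality to reach $\frac{\mathrm d}{\dt}\|\nabla u\|_{L^2}^2+\|\sqrt\rho\dot u\|_{L^2}^2\leq C\|\nabla u\|_{L^2}^6$, then weight by $t^{1/2}$ and close with Lemma \ref{Lem.low_order_est}. The only cosmetic difference is that the paper absorbs the sixth-power term via Gr\"onwall using $\|\nabla u\|_{L^4(\R^+;L^2)}\leq 1$, whereas you bound it directly by factoring out $\|t^{1/4}\nabla u\|_{L^\infty(\R^+;L^2)}^4$, which is equally valid.
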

\begin{proof}
	Taking $L^2$ inner product of the momentum equation of \eqref{INS} with $\dot u$, we obtain\footnote{Repeated indices represent  summation.}
	\begin{align*}
		0&=\|\sqrt\rho\dot u(t)\|_{L^2}^2+\langle\nabla u(t), \nabla\dot u(t)\rangle_{L^2}+\langle\nabla P(t), \dot u(t)\rangle_{L^2}\\
		&=\|\sqrt\rho\dot u(t)\|_{L^2}^2+\frac12\frac{\mathrm d}{\dt}\|\nabla u(t)\|_{L^2}^2+\int_{\R^3}\pa_iu^j\pa_iu^k\pa_ku^j\,\dx-\int_{\R^3}P\pa_ju^k\pa_ku^j\,\dx.
	\end{align*}
	Hence, we have
	\begin{align*}
		\frac12&\frac{\mathrm d}{\dt}\|\nabla u(t)\|_{L^2}^2+\|\sqrt\rho\dot u(t)\|_{L^2}^2\leq \left(\|\nabla u(t)\|_{L^6}+\|P(t)\|_{L^6}\right)\|\nabla u(t)\|_{L^3}\|\nabla u(t)\|_{L^2}\\
		&\leq C\left(\|\nabla^2u(t)\|_{L^2}+\|\nabla P(t)\|_{L^2}\right)\|\nabla^2u(t)\|_{L^2}^{1/2}\|\nabla u(t)\|_{L^2}^{3/2},\quad\forall\ t\in[0, +\infty).
	\end{align*}
	Recall the Stokes estimate 
	\begin{equation}\label{Eq.Stokes_est}
		\|\nabla^2u(t)\|_{L^2}+\|\nabla P(t)\|_{L^2}\leq C\|\rho\dot u(t)\|_{L^2}\leq C\|\sqrt\rho\dot u(t)\|_{L^2},\quad\forall\ t\in[0, +\infty).
	\end{equation}
	Thus, we have
	\begin{align*}
		\frac12\frac{\mathrm d}{\dt}\|\nabla u(t)\|_{L^2}^2+\|\sqrt\rho\dot u(t)\|_{L^2}^2&\leq C\|\nabla u(t)\|_{L^2}^{3/2}\|\sqrt\rho\dot u(t)\|_{L^2}^{3/2}\\
		&\leq C\|\nabla u(t)\|_{L^2}^6+\frac12\|\sqrt\rho\dot u(t)\|_{L^2}^2,\quad\forall\ t\in[0, +\infty).
	\end{align*}
	Or equivalently, $\frac{\mathrm d}{\dt}\|\nabla u(t)\|_{L^2}^2+\|\sqrt\rho\dot u(t)\|_{L^2}^2\leq C\|\nabla u(t)\|_{L^2}^6$, hence
	\begin{align*}
		\frac{\mathrm d}{\dt}\left(t^{1/2}\|\nabla u(t)\|_{L^2}^2\right)+t^{1/2}\|\sqrt\rho\dot u(t)\|_{L^2}^2\leq \frac12t^{-1/2}\|\nabla u(t)\|_{L^2}^2+Ct^{1/2}\|\nabla u(t)\|_{L^2}^6
	\end{align*}
	for all $t\in\R^+$. Now it follows from Gr\"onwall's lemma that
	\begin{equation}\label{3.22}
		\sqrt t\|\nabla u(t)\|_{L^2}^2+\int_0^t\sqrt s\|\sqrt\rho \dot u(s)\|_{L^2}^2\,\mathrm ds\leq \frac12\exp\left(C\|\nabla u\|_{L^4(0,t;L^2)}^4\right)\|s^{-1/4}\nabla u\|_{L^2(0,t;L^2)}^2.
	\end{equation}
	By \eqref{Eq.nabla_u_L4L2}, \eqref{Eq.low_order_est}, \eqref{3.22} and \eqref{Eq.Stokes_est}, we deduce \eqref{Eq.low_order_est_rhou}.
\end{proof}

\begin{lemma}
	Let $\varepsilon_0\in(0,1)$ be given by Lemma \ref{Lem.low_order_est}. If $\|u_0\|_{\dot H^{1/2}(\R^3)}<\varepsilon_0$, then
	\begin{equation}\label{Eq.high_order_est}
		\|t^{3/4}\sqrt\rho\dot u\|_{L^\infty(\R^+; L^2(\R^3))}+\|t^{3/4}\nabla\dot u\|_{L^2(\R^+;L^2(\R^3))}\leq C\|u_0\|_{\dot H^{1/2}(\R^3)}
	\end{equation}
	for some constant $C>0$ depending only on $\|\rho_0\|_{L^{\infty}}$.
\end{lemma}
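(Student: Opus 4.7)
I would follow the standard Hoff-type scheme, pushed through with the critical weight $t^{3/2}$. First, apply the material derivative $D_t=\pa_t+u\cdot\nabla$ to the momentum equation $\rho\dot u=\Delta u-\nabla P$. Using $D_t\rho=0$ together with the commutator identity $[D_t,\pa_i]=-\pa_iu^k\pa_k$, this yields (schematically)
\[
\rho\ddot u-\Delta\dot u+\nabla\dot P=-(\Delta u^k)\pa_ku-2\pa_iu^k\pa_i\pa_ku+\pa u\cdot\nabla P.
\]
Taking the $L^2$ inner product with $\dot u$, using $\pa_t\rho+\dive(\rho u)=0$ to combine the time-derivative and transport parts into $\tfrac12\frac{d}{dt}\|\sqrt\rho\dot u\|_{L^2}^2$, and the identity $\dive\dot u=\pa_ju^k\pa_ku^j$ (consequence of $\dive u=0$) to rewrite $\langle\nabla\dot P,\dot u\rangle$, one arrives at an energy identity of the form
\[
\frac12\frac{d}{dt}\|\sqrt\rho\dot u\|_{L^2}^2+\|\nabla\dot u\|_{L^2}^2=\bigl\langle\text{quadratic-in-}\nabla u\text{ terms},\dot u\bigr\rangle+\langle\dot P,\pa_ju^k\pa_ku^j\rangle.
\]

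Next, I would bound the right-hand side by Hölder combined with the 3D Sobolev embedding $\|\dot u\|_{L^6}\le C\|\nabla\dot u\|_{L^2}$, the Gagliardo-Nirenberg interpolation $\|\nabla u\|_{L^3}\le C\|\nabla u\|_{L^2}^{1/2}\|\nabla^2u\|_{L^2}^{1/2}$, and the Stokes bound \eqref{Eq.Stokes_est}. Typical terms $\int|\nabla u||\nabla^2 u||\dot u|$ and $\int|\nabla P||\nabla u||\dot u|$ are thereby reduced to powers of $\|\nabla u\|_{L^2}$, $\|\sqrt\rho\dot u\|_{L^2}$, $\|\nabla^2u\|_{L^2}$ and $\|\nabla\dot u\|_{L^2}$; after Young's inequality absorbing $\tfrac12\|\nabla\dot u\|_{L^2}^2$ into the left-hand side, this should distill into a pointwise differential inequality of the shape
\[
\frac{d}{dt}\|\sqrt\rho\dot u\|_{L^2}^2+\|\nabla\dot u\|_{L^2}^2\le C\|\nabla u\|_{L^2}^4\|\sqrt\rho\dot u\|_{L^2}^2+C\|\nabla u\|_{L^2}^2\|\nabla^2u\|_{L^2}^2.
\]

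Third, I would multiply by the weight $t^{3/2}$ and integrate in time, which produces the forcing term $\tfrac32t^{1/2}\|\sqrt\rho\dot u\|_{L^2}^2$ whose integral is precisely $\tfrac32\|t^{1/4}\sqrt\rho\dot u\|_{L^2(\R^+;L^2)}^2\le C\|u_0\|_{\dot H^{1/2}}^2$ by \eqref{Eq.low_order_est_rhou}. The weighted coefficient in the Grönwall factor is controlled by $t^{1/2}\|\nabla u\|_{L^2}^2\le(t^{1/4}\|\nabla u\|_{L^2})^2\le C\|u_0\|_{\dot H^{1/2}}^2<C\varepsilon_0^2$, while the weighted remainder $\int_0^\infty t^{3/2}\|\nabla u\|_{L^2}^2\|\nabla^2u\|_{L^2}^2\,\dt$ splits into the product of $\|t^{1/4}\nabla u\|_{L^\infty L^2}^2$ and $\|t^{1/4}\nabla^2u\|_{L^2L^2}^2$, both $\le C\|u_0\|_{\dot H^{1/2}}^2$ by \eqref{Eq.low_order_est_rhou}. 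A standard Grönwall argument (using that $t^{3/2}\|\sqrt\rho\dot u(t)\|_{L^2}^2\to0$ as $t\to 0^+$ on the smooth solution with $\rho\ge\rho_*>0$) then delivers the desired bound.

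The main obstacle is the delicate bookkeeping of time weights at the critical $\dot H^{1/2}$ scaling: every nonlinear term must decompose into factors matching one of the already-available weighted norms ($t^{1/4}\nabla u\in L^\infty L^2$, $t^{1/4}\sqrt\rho\dot u\in L^2L^2$, $t^{1/4}\nabla^2u\in L^2L^2$, $\nabla u\in L^4L^2$), leaving essentially no slack. The smallness of $\varepsilon_0$ is what makes the quadratic-in-$\dot u$ Grönwall exponent integrable and small, ensuring that the weighted energy functional does not blow up on bootstrap.
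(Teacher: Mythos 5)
Your plan breaks down at the two places where the paper's proof actually has to work hardest. First, the pressure term: after applying $D_t$ the equation carries $\nabla P_t+u\cdot\nabla(\nabla P)$, and your identity keeps a term $\langle \dot P,\pa_ju^k\pa_ku^j\rangle$ on the right-hand side. None of the tools you invoke (Sobolev embedding, Gagliardo--Nirenberg, the Stokes bound \eqref{Eq.Stokes_est}) gives any control of $\dot P$ or $P_t$ at this stage, so this term cannot be "distilled" into powers of $\|\nabla u\|_{L^2}$, $\|\sqrt\rho\dot u\|_{L^2}$, $\|\nabla^2u\|_{L^2}$, $\|\nabla\dot u\|_{L^2}$. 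The paper avoids estimating $\dot P$ altogether by integrating by parts in time, writing $J(t)=\frac{\mathrm d}{\dt}\int_{\R^3}P\pa_iu^j\pa_ju^i\,\dx-3\int_{\R^3}P\pa_iu^j\pa_j\dot u^i\,\dx+2\int_{\R^3}P\pa_iu^k\pa_ku^j\pa_ju^i\,\dx$, which forces the modified energy functional $\Psi(t)=\|\sqrt\rho\dot u\|_{L^2}^2-2\int P\pa_iu^j\pa_ju^i\,\dx$ together with the equivalence \eqref{Eq.Psi_est}. This step is absent from your proposal and is not optional.

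Second, the differential inequality you claim, with Gr\"onwall coefficient $C\|\nabla u\|_{L^2}^4$ and forcing $C\|\nabla u\|_{L^2}^2\|\nabla^2u\|_{L^2}^2$, does not follow from the stated estimates. The cubic terms $\int|\nabla u|^2|\nabla\dot u|\,\dx$ give, after Young and \eqref{Eq.Stokes_est}, a coefficient of size $\|\nabla u\|_{L^3}^2\sim\|\nabla u\|_{L^2}\|\nabla^2u\|_{L^2}$ multiplying $\|\sqrt\rho\dot u\|_{L^2}^2$ (this is exactly \eqref{Eq.Psi'(t)} in the paper), and with only $t^{1/4}\nabla u\in L^\infty L^2$ and $t^{1/4}\nabla^2 u\in L^2L^2$ from \eqref{Eq.low_order_est}--\eqref{Eq.low_order_est_rhou} this coefficient is borderline \emph{not} integrable in time ($\int_0^T t^{-1/2}\cdot t^{1/4}\|\nabla^2u\|_{L^2}\,\dt$ fails by a logarithm), so "a standard Gr\"onwall argument" does not close. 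The paper instead distributes the weight as $t^{3/2}\|\nabla u\|_{L^3}^2\|\sqrt\rho\dot u\|_{L^2}^2\le C\|u_0\|_{\dot H^{1/2}}\,A(t)\,t^{1/2}\|\sqrt\rho\dot u\|_{L^2}^2$ with $A(t)=\sup_{0<s<t}s^{3/4}\|\sqrt\rho\dot u(s)\|_{L^2}$, integrates using $\int s^{1/2}\|\sqrt\rho\dot u\|_{L^2}^2\,\ds\le C\|u_0\|_{\dot H^{1/2}}^2$, and closes with the quadratic self-bounding inequality $A(t)^2\le C\|u_0\|_{\dot H^{1/2}}^2(1+A(t))+C\|u_0\|_{\dot H^{1/2}}^6$; this nonlinear absorption replaces Gr\"onwall and is the crux of the lemma. (A smaller slip: $\int_0^\infty t^{3/2}\|\nabla u\|_{L^2}^2\|\nabla^2u\|_{L^2}^2\,\dt$ does not split into $\|t^{1/4}\nabla u\|_{L^\infty L^2}^2\|t^{1/4}\nabla^2u\|_{L^2L^2}^2$ --- a factor $t^{1/2}$ is left over --- so even that bookkeeping would need the $A(t)$ device.) In short, the overall skeleton (apply $D_t$, energy estimate, weight $t^{3/2}$) matches the paper, but the two key mechanisms --- the modified functional $\Psi$ for the pressure and the $A(t)$ absorption in place of Gr\"onwall --- are missing, and without them the proof does not go through.
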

\begin{proof}
	Applying the material derivative $D_t$ to the momentum equation of \eqref{INS}, we get 
	\begin{equation*}
			\rho(\pa_t\dot{u}+u\cdot\nabla \dot{u})-\Delta\dot{u}
			=-\pa_k(\pa_ku\cdot \nabla u)-\pa_k u\cdot \nabla \pa_ku
			-(\nabla P_t+u\cdot \nabla (\nabla P)).
	\end{equation*}
	Taking $L^2$ inner product with $\dot{u}$ of the above equation, we get by integration by parts and $\dive u=0$ that 
	\begin{equation}\label{Eq.Dt_energy}
			\frac12 \frac{\mathrm d}{\dt}\|\sqrt\rho\dot u\|_{L^2}^2+\|\nabla \dot{u}(t)\|^2_{L^2}
			=\int_{\R^3}(\pa_ku\cdot \nabla u)\cdot\pa_k\dot{u}\,\dx+\int_{\R^3}\pa_ku\cdot(\pa_ku\cdot\nabla\dot{u})\,\dx+J,
	\end{equation}
	where
	$$J(t)\eqdef -\int_{\R^3}(\nabla P_t+u\cdot \nabla (\nabla P))\cdot\dot{u}\,\dx,\quad\forall\ t\in[0, +\infty).$$
	Along the same lines as in the proof of Lemma 3.2 in \cite{Hao_Shao_Wei_Zhang}, we have\footnote{Here we stress that $\pa_iu^k\pa_ku^j\pa_ju^i=0 $ holds for $d=2$, but not for $d=3$.}
	\begin{equation}
		J(t)=\frac{\mathrm d}{\dt}\int_{\R^3} P\pa_iu^j\pa_ju^i\,\dx
		-3\int_{\R^3} P \pa_iu^j\pa_j\dot{u}^i\,\dx+2\int_{\R^3} P\pa_iu^k\pa_ku^j\pa_ju^i\,\dx.
	\end{equation}
	for all $t\in[0, +\infty)$.  Let
	\begin{equation}
		\Psi(t)\eqdef\|\sqrt\rho\dot u(t)\|_{L^2}^2-2\int_{\R^3} P\pa_iu^j\pa_ju^i\,\dx,\quad\forall\ t\in[0, +\infty).
	\end{equation}
	Then we have
	\begin{align*}
		&\frac12\Psi'(t)+\|\nabla \dot{u}(t)\|^2_{L^2}
=\int_{\R^3}(\pa_ku\cdot \nabla u)\cdot\pa_k\dot{u}\,\dx+\int_{\R^3}\pa_ku\cdot(\pa_ku\cdot\nabla\dot{u})\,\dx\\
&\qquad\qquad-3\int_{\R^3} P \pa_iu^j\pa_j\dot{u}^i\,\dx+2\int_{\R^3} P\pa_iu^k\pa_ku^j\pa_ju^i\,\dx\\
		\leq&\ C\|\nabla u(t)\|_{L^6}\|\nabla u(t)\|_{L^3}\|\nabla \dot u(t)\|_{L^2}+C\|P(t)\|_{L^6}\|\nabla u(t)\|_{L^3}\|\nabla \dot u(t)\|_{L^2}\\&+C\|P(t)\|_{L^6}\|\nabla u(t)\|_{L^3}^2\|\nabla u(t)\|_{L^6}\\
		\leq&\ C\|\nabla u(t)\|_{L^3}\|\nabla \dot u(t)\|_{L^2}\|(\nabla^2u(t), \nabla P(t))\|_{L^2}
+C\|\nabla u(t)\|_{L^3}^2\|(\nabla^2u(t), \nabla P(t))\|_{L^2}^2\\
		\overset{\eqref{Eq.Stokes_est}}{\leq}&C\|\nabla u(t)\|_{L^3}\|\nabla \dot u(t)\|_{L^2}\|\sqrt\rho\dot u(t)\|_{L^2}+C\|\nabla u(t)\|_{L^3}^2\|\sqrt\rho\dot u(t)\|_{L^2}^2\\
			\leq&\ \frac12\|\nabla \dot{u}(t)\|^2_{L^2}+C\|\nabla u(t)\|_{L^3}^2\|\sqrt\rho\dot u(t)\|_{L^2}^2,
		\end{align*}
		hence,
  \begin{align}
      \Psi'(t)&+\|\nabla \dot{u}(t)\|^2_{L^2}\leq C\|\nabla u(t)\|_{L^3}^2\|\sqrt\rho\dot u(t)\|_{L^2}^2,\nonumber\\
      (t^{3/2}\Psi)'(t)&+t^{3/2}\|\nabla \dot{u}(t)\|^2_{L^2}\leq \frac32t^{1/2}\Psi(t)+Ct^{3/2}\|\nabla u(t)\|_{L^3}^2\|\sqrt\rho\dot u(t)\|_{L^2}^2\label{Eq.Psi'(t)}
  \end{align}
  for all $t\in[0, +\infty)$.

        On the other hand, we have
		\begin{align*}
			&\left|\int_{\R^3} P\pa_iu^j\pa_ju^i\,\dx\right|\leq \|P(t)\|_{L^6}\|\nabla u(t)\|_{L^3}\|\nabla u(t)\|_{L^2}\\
			&\leq C\|\nabla P(t)\|_{L^2}\|\nabla u(t)\|_{L^2}^{3/2}\|\nabla^2u(t)\|_{L^2}^{1/2}
			\overset{\eqref{Eq.Stokes_est}}{\leq}C\|\nabla u(t)\|_{L^2}^{3/2}\|\sqrt\rho\dot u(t)\|_{L^2}^{3/2}\\
			&\leq \frac14\|\sqrt\rho\dot u(t)\|_{L^2}^2+C\|\nabla u(t)\|_{L^2}^6,
		\end{align*}
		thus,
		\begin{equation}\label{Eq.Psi_est}
			\frac12\|\sqrt\rho\dot u(t)\|_{L^2}^2-C\|\nabla u(t)\|_{L^2}^6\leq \Psi(t)\leq 2\|\sqrt\rho\dot u(t)\|_{L^2}^2+C\|\nabla u(t)\|_{L^2}^6, \quad\forall\ t\in[0, +\infty).
		\end{equation}

    It follows from \eqref{Eq.Psi'(t)} and \eqref{Eq.Psi_est} that
    \begin{align*}
        &(t^{3/2}\Psi)'(t)+t^{3/2}\|\nabla \dot{u}(t)\|^2_{L^2}\\
        &\qquad\leq Ct^{1/2}\|\sqrt\rho\dot u(t)\|_{L^2}^2+Ct^{1/2}\|\nabla u(t)\|_{L^2}^6
        +Ct^{3/2}\|\nabla u(t)\|_{L^3}^2\|\sqrt\rho\dot u(t)\|_{L^2}^2,\quad\forall\ t\in[0, +\infty).
    \end{align*}
    Then we have
    \begin{align}\label{Eq.t^3/2Psi}
        &t^{3/2}\Psi(t)+\int_0^ts^{3/2}\|\nabla \dot{u}(s)\|^2_{L^2}\,\ds\\
    \nonumber    \leq & C\int_0^ts^{1/2}\|\sqrt\rho\dot u(s)\|_{L^2}^2\,\ds\left(1+\sup_{0<s<t}s\|\nabla u(s)\|_{L^3}^2\right)
    +C\int_0^ts^{1/2}\|\nabla u(s)\|_{L^2}^6\,\ds
    \end{align}
    for all $t\in[0, +\infty)$. Moreover, it follows from \eqref{Eq.low_order_est_rhou}, \eqref{Eq.low_order_est} and \eqref{Eq.nabla_u_L4L2} that 
    \begin{align*}
        &
        \int_0^ts^{1/2}\|\sqrt\rho\dot u(s)\|_{L^2}^2\,\ds+\int_0^ts^{1/2}\|\nabla u(s)\|_{L^2}^6\,\ds\\
        \leq & C\|t^{1/4}\sqrt\rho\dot u\|_{L^2(\R^+;L^2)}^2+C\|t^{1/4}\nabla u\|_{L^\infty(\R^+; L^2)}^2\|\nabla u\|_{L^4(\R^+;L^2)}^4\leq C\|u_0\|_{\dot H^{1/2}}^2;
    \end{align*}
    and we also have (for all $s>0$, also using \eqref{Eq.low_order_est} and \eqref{Eq.Stokes_est})
    \begin{align*}
        s\|\nabla u(s)\|_{L^3}^2&\leq Cs^{1/4}\|\nabla u(s)\|_{L^2}s^{3/4}\|\nabla^2u(s)\|_{L^2}\leq C\|u_0\|_{\dot H^{1/2}}s^{3/4}\|\sqrt\rho\dot u(s)\|_{L^2}.
    \end{align*}
    Thus, \eqref{Eq.t^3/2Psi} implies that (for all $t>0$, also using $\|u_0\|_{\dot H^{1/2}}<\varepsilon_0<1$)
    \begin{equation}\label{3.30}
        t^{3/2}\Psi(t)+\int_0^ts^{3/2}\|\nabla \dot{u}(s)\|^2_{L^2}\,\ds\leq C\|u_0\|_{\dot H^{1/2}}^2(1+A(t)),
    \end{equation}
    where $A(t)\eqdef\sup_{0<s<t}s^{3/4}\|\sqrt\rho\dot u(s)\|_{L^2}$. Thus using \eqref{Eq.Psi_est}, \eqref{3.30} and  \eqref{Eq.low_order_est}, we obtain
    \begin{align*}
        &t^{3/2}\|\sqrt\rho\dot u(t)\|_{L^2}^2+\int_0^ts^{3/2}\|\nabla \dot{u}(s)\|^2_{L^2}\,\ds\leq C\|u_0\|_{\dot H^{1/2}}^2(1+A(t))+Ct^{3/2}\|\nabla u(t)\|_{L^2}^6\\ \leq&C\|u_0\|_{\dot H^{1/2}}^2(1+A(t))+C\|t^{1/4}\nabla u\|_{L^\infty(\R^+; L^2)}^6\leq C\|u_0\|_{\dot H^{1/2}}^2(1+A(t))+C\|u_0\|_{\dot H^{1/2}}^6,\\
        &A(t)^2+\int_0^ts^{3/2}\|\nabla \dot{u}(s)\|^2_{L^2}\,\ds\leq C\|u_0\|_{\dot H^{1/2}}^2(1+A(t))+C\|u_0\|_{\dot H^{1/2}}^6,\\
       &A(t)^2+\int_0^ts^{3/2}\|\nabla \dot{u}(s)\|^2_{L^2}\,\ds\leq C\|u_0\|_{\dot H^{1/2}}^2+C\|u_0\|_{\dot H^{1/2}}^4+C\|u_0\|_{\dot H^{1/2}}^6\leq  C\|u_0\|_{\dot H^{1/2}}^2.
    \end{align*}
    
    This completes the proof.
\end{proof}

\begin{corollary}[Estimates for $u_t$]
    Let $\varepsilon_0\in(0,1)$ be given by Lemma \ref{Lem.low_order_est}. If $\|u_0\|_{\dot H^{1/2}}<\varepsilon_0$, then
	\begin{equation}\label{Eq.high_order_est_ut}
		\|t^{1/4}\sqrt\rho u_t\|_{L^2(\R^+; L^2)}+\|t^{3/4}\sqrt\rho u_t\|_{L^\infty(\R^+; L^2)}+\|t^{3/4}\nabla u_t\|_{L^2(\R^+;L^2)}\leq C\|u_0\|_{\dot H^{1/2}}
	\end{equation}
	for some constant $C>0$ depending only on $\|\rho_0\|_{L^{\infty}}$.
\end{corollary}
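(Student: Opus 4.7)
The plan is to reduce everything to the estimates already established for $\dot u$ by using the pointwise identity $u_t=\dot u-u\cdot\nabla u$, and then to absorb the convective remainder using the quantitative bounds for $\nabla u$, $\nabla^2 u$, and $\sqrt\rho u$ from \eqref{Eq.low_order_est}, \eqref{Eq.rhou_L3}, \eqref{Eq.low_order_est_rhou}, \eqref{Eq.high_order_est} together with the Stokes estimate \eqref{Eq.Stokes_est}. The contributions of $\dot u$ are immediate: the bounds on $\|t^{1/4}\sqrt\rho\dot u\|_{L^2_tL^2}$, $\|t^{3/4}\sqrt\rho\dot u\|_{L^\infty_tL^2}$ and $\|t^{3/4}\nabla\dot u\|_{L^2_tL^2}$ in \eqref{Eq.low_order_est_rhou} and \eqref{Eq.high_order_est} cover the three target norms up to the convective error. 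So the whole proof reduces to controlling $\sqrt\rho (u\cdot\nabla u)$ in $L^2_t L^2$ with weight $t^{1/4}$, in $L^\infty_t L^2$ with weight $t^{3/4}$, and $\nabla(u\cdot\nabla u)$ in $L^2_t L^2$ with weight $t^{3/4}$, each by $C\|u_0\|_{\dot H^{1/2}}$.

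For the first, since $|\sqrt\rho(u\cdot\nabla u)|\le \|\rho_0\|_{L^\infty}^{1/2}|u||\nabla u|$, the Sobolev embedding $\dot H^1(\R^3)\hookrightarrow L^6$ and the 3-D Gagliardo–Nirenberg inequality $\|\nabla u\|_{L^3}\le C\|\nabla u\|_{L^2}^{1/2}\|\nabla^2 u\|_{L^2}^{1/2}$ give $\|\sqrt\rho(u\cdot\nabla u)\|_{L^2}\le C\|\nabla u\|_{L^2}^{3/2}\|\nabla^2 u\|_{L^2}^{1/2}$. I will then split $t^{1/2}=t^{1/4}\cdot t^{1/4}\cdot t^{-1/4}\cdot t^{1/4}$ so that
\[
\int_0^{\infty}t^{1/2}\|\nabla u\|_{L^2}^{3}\|\nabla^2 u\|_{L^2}\,\dt \le \|t^{1/4}\nabla u\|_{L^\infty_tL^2}^{2}\,\|t^{-1/4}\nabla u\|_{L^2_tL^2}\,\|t^{1/4}\nabla^2 u\|_{L^2_tL^2},
\]
which is $\le C\|u_0\|_{\dot H^{1/2}}^4\le C\|u_0\|_{\dot H^{1/2}}^2$ by \eqref{Eq.low_order_est} and \eqref{Eq.low_order_est_rhou}. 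For the $L^\infty$-in-time estimate, I avoid $u\in L^\infty$ by exploiting the critical $L^3$ bound \eqref{Eq.rhou_L3}: write $\|\sqrt\rho u\cdot\nabla u\|_{L^2}\le \|\sqrt\rho u\|_{L^3}\|\nabla u\|_{L^6}\le C\|u_0\|_{\dot H^{1/2}}\|\nabla^2 u\|_{L^2}$, and then use \eqref{Eq.Stokes_est} and the bound $\|t^{3/4}\sqrt\rho\dot u\|_{L^\infty_tL^2}\le C\|u_0\|_{\dot H^{1/2}}$ from \eqref{Eq.high_order_est}.

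For the remaining estimate on $\nabla(u\cdot\nabla u)=\nabla u\cdot\nabla u+u\cdot\nabla^2 u$, I will apply in 3-D the Gagliardo–Nirenberg bounds $\|\nabla u\|_{L^4}^{2}\le C\|\nabla u\|_{L^2}^{1/2}\|\nabla^2 u\|_{L^2}^{3/2}$ and $\|u\|_{L^\infty}\le C\|\nabla u\|_{L^2}^{1/2}\|\nabla^2 u\|_{L^2}^{1/2}$, yielding in both pieces $\|\nabla(u\cdot\nabla u)\|_{L^2}^{2}\le C\|\nabla u\|_{L^2}\|\nabla^2 u\|_{L^2}^{3}$. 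Splitting $t^{3/2}=t^{1/4}\cdot t^{3/4}\cdot (t^{1/4})^2$ leads to
\[
\int_0^\infty t^{3/2}\|\nabla u\|_{L^2}\|\nabla^2 u\|_{L^2}^{3}\,\dt\le \|t^{1/4}\nabla u\|_{L^\infty_tL^2}\,\|t^{3/4}\nabla^2 u\|_{L^\infty_tL^2}\,\|t^{1/4}\nabla^2 u\|_{L^2_tL^2}^{2},
\]
which by \eqref{Eq.Stokes_est}, \eqref{Eq.low_order_est_rhou}, \eqref{Eq.high_order_est} is $\le C\|u_0\|_{\dot H^{1/2}}^{4}\le C\|u_0\|_{\dot H^{1/2}}^{2}$.

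The only delicate point is the mild step at the end of the $L^\infty_t L^2$ estimate, where one must avoid bounds requiring $u$ or $\nabla^2 u$ in $L^3$ or $L^6$ uniformly (which would cost regularity we do not have); replacing $u$ by $\sqrt\rho u$ via \eqref{Eq.rhou_L3} is what makes the argument work, and is the reason this lemma relies on the critical $L^3$ control of $\sqrt\rho u$.
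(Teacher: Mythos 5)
Your proposal is correct and follows essentially the same route as the paper: write $u_t=\dot u-u\cdot\nabla u$ and bound the convective term in the three weighted norms via Gagliardo--Nirenberg/Sobolev inequalities, the Stokes estimate \eqref{Eq.Stokes_est}, and the bounds \eqref{Eq.low_order_est}, \eqref{Eq.low_order_est_rhou}, \eqref{Eq.high_order_est}, with the weight splittings you describe matching the paper's. The only deviation is the $t^{3/4}$-weighted $L^\infty_tL^2$ bound, where you use $\|\sqrt\rho u\|_{L^3}\|\nabla u\|_{L^6}$ through \eqref{Eq.rhou_L3}, while the paper instead uses $\|\sqrt\rho\, u\cdot\nabla u\|_{L^2}\leq C\|\nabla u\|_{L^2}^{3/2}\|\nabla^2u\|_{L^2}^{1/2}$ and the splitting $t^{3/4}=(t^{1/4})^{3/2}(t^{3/4})^{1/2}$, so your concluding claim that the critical $L^3$ control is what makes this step work is an overstatement --- your variant is valid, but the estimate goes through without it.
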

\begin{proof}
Using the inequality $\|f\|_{L^\infty(\R^3)}\leq C\|\nabla f\|_{L^2(\R^3)}^{1/2}\|\nabla f\|_{L^6(\R^3)}^{1/2}$, we have
    \begin{align*}
     &\|\sqrt\rho u\cdot\nabla u(t)\|_{L^2}\leq C\|u(t)\|_{L^6}\|\nabla u(t)\|_{L^3}\leq C\|\nabla u(t)\|_{L^2}^{\f32}\|\nabla^2 u(t)\|_{L^2}^{\f12};\\
     &\|\nabla(u\cdot\nabla u)(t)\|_{L^2}\leq \|\nabla u(t)\|_{L^3}\|\nabla u(t)\|_{L^6}+\|u(t)\|_{L^\infty}\|\nabla^2 u(t)\|_{L^2}
     \leq C\|\nabla u(t)\|_{L^2}^{\f12}\|\nabla^2u(t)\|_{L^2}^{\f32}.
    \end{align*}
    By \eqref{Eq.low_order_est}, \eqref{Eq.low_order_est_rhou}, \eqref{Eq.Stokes_est} and \eqref{Eq.high_order_est}, we get
    \begin{align*}
        &\int_{\R^+}t^{\f12}\|\sqrt\rho u\cdot\nabla u(t)\|_{L^2}^2\,\dt
        \leq C\int_{\R^+}t^{\f12}\|\nabla u(t)\|_{L^2}^3\|\nabla^2 u(t)\|_{L^2}\,\dt\\
        &\leq C\|t^{1/4}\nabla u\|_{L^\infty(\R^+; L^2)}^2\|t^{-1/4}\nabla u\|_{L^2(\R^+; L^2)}\|t^{1/4}\nabla^2 u\|_{L^2(\R^+; L^2)}\leq C\|u_0\|_{\dot H^{1/2}}^4;\\
        &t^{\f34}\|\sqrt\rho u\cdot\nabla u(t)\|_{L^2}\leq Ct^{\f34}\|\nabla u(t)\|_{L^2}^{\f32}\|\nabla^2u(t)\|_{L^2}^{\f12}
        \leq C\|t^{\f14}\nabla u\|_{L^2}^{\f32}\|t^{\f34}\sqrt\rho\dot u\|_{L^2}^{\f12}\leq C\|u_0\|_{\dot H^{\f12}}^2;\\
        &\int_{\R^+}t^{3/2}\|\nabla(u\cdot\nabla u)(t)\|_{L^2}^2\,\dt\leq C\int_{\R^+}t^{3/2}\|\nabla u(t)\|_{L^2}\|\nabla^2u(t)\|_{L^2}^3\,\dt\\
        &\leq \|t^{1/4}\nabla u\|_{L^\infty(\R^+; L^2)}\|t^{3/4}\sqrt\rho\dot u\|_{L^\infty(\R^+; L^2)}\int_{\R^+}t^{1/2}\|\sqrt\rho\dot u(t)\|_{L^2}^2\,\dt\leq C\|u_0\|_{\dot H^{1/2}}^4.
    \end{align*}
    Therefore, \eqref{Eq.high_order_est_ut} follows from \eqref{Eq.low_order_est_rhou}, \eqref{Eq.high_order_est} and $u_t=\dot u-u\cdot\nabla u$.
\end{proof}

\begin{corollary}[Estimates for $\sqrt{\rho_0}u_t$]
    Let $\varepsilon_0\in(0,1)$ be given by Lemma \ref{Lem.low_order_est}. If\\ $\|u_0\|_{\dot H^{1/2}(\R^3)}<\varepsilon_0$, then
	\begin{equation}\label{Eq.rho_0ut_est}
		\|t^{1/4}\sqrt{\rho_0} u_t\|_{L^2(\R^+; L^2(\R^3))}\leq C\|u_0\|_{\dot H^{1/2}}
	\end{equation}
	for some constant $C>0$ depending only on $\|\rho_0\|_{L^{\infty}}$.
\end{corollary}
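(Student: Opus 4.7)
My plan is to mirror the proof of the preceding corollary by writing $u_t=\dot u-u\cdot\nabla u$ and bounding $\sqrt{\rho_0}$ times each piece in $L^2(\mathbb R^+;L^2(\mathbb R^3))$ with weight $t^{1/4}$, so that
\[
\|t^{1/4}\sqrt{\rho_0}u_t\|_{L^2(\mathbb R^+;L^2)}\leq \|t^{1/4}\sqrt{\rho_0}\dot u\|_{L^2(\mathbb R^+;L^2)}+\|t^{1/4}\sqrt{\rho_0}(u\cdot\nabla u)\|_{L^2(\mathbb R^+;L^2)}.
\]

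For the convective piece, the pointwise inequality $\sqrt{\rho_0(x)}\leq\sqrt{\|\rho_0\|_{L^\infty}}$ reduces matters directly to the scalar estimate already appearing in the previous corollary, namely
\[
\|\sqrt{\rho_0}(u\cdot\nabla u)(t)\|_{L^2}\leq C\|u(t)\|_{L^6}\|\nabla u(t)\|_{L^3}\leq C\|\nabla u(t)\|_{L^2}^{3/2}\|\nabla^2u(t)\|_{L^2}^{1/2}.
\]
Squaring, multiplying by $\sqrt t$, integrating in time, and using \eqref{Eq.low_order_est}, \eqref{Eq.low_order_est_rhou} together with the Stokes bound \eqref{Eq.Stokes_est} then gives $\|t^{1/4}\sqrt{\rho_0}(u\cdot\nabla u)\|_{L^2(\mathbb R^+;L^2)}\leq C\|u_0\|_{\dot H^{1/2}}^2\leq C\|u_0\|_{\dot H^{1/2}}$, since $\|u_0\|_{\dot H^{1/2}}<\varepsilon_0<1$.

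For the material-derivative piece $\sqrt{\rho_0}\dot u$, which is the crux of the proof, the idea is to exploit the transport identity $\rho_0(x)=\rho(t,X(t,x))$, where $X$ is the Lagrangian flow of $u$, combined with the fact that $\dive u=0$ makes $X(t,\cdot)$ measure-preserving. A Lagrangian change of variables converts $\sqrt{\rho_0}$-weighted integrals into $\sqrt{\rho}$-weighted ones and should allow us to reduce to the bound $\|t^{1/4}\sqrt\rho\dot u\|_{L^2(\mathbb R^+;L^2)}\leq C\|u_0\|_{\dot H^{1/2}}$ already provided by \eqref{Eq.low_order_est_rhou}.

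The main obstacle is that this change of variables naturally identifies $\sqrt{\rho_0(x)}\dot u(t,x)$ with $\sqrt{\rho(t,y)}\dot u(t,X^{-1}(t,y))$, i.e.~$\dot u$ evaluated at the particle position rather than at the original Eulerian point, and reconciling this compositional mismatch with a constant depending only on $\|\rho_0\|_{L^\infty}$---and not on the auxiliary positive lower bound $\rho_*$ of $\rho$---is the nontrivial step. I would handle it by combining the spatial regularity $t^{3/4}\nabla\dot u\in L^2(\mathbb R^+;L^2)$ from \eqref{Eq.high_order_est} (which controls oscillations of $\dot u$) with a bound on the flow displacement $|X(t,x)-x|$ obtained from the previously established control of $u$, thereby absorbing the error introduced by the composition with $X$.
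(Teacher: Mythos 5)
Your route is genuinely different from the paper's, and it can be made to work, but the decisive step is only asserted. The paper argues entirely in Eulerian variables and never splits $u_t$: it writes $\int t^{1/2}\rho_0|u_t|^2\,\dx=\int t^{1/2}\rho|u_t|^2\,\dx-\int t^{1/2}(\rho-\rho_0)|u_t|^2\,\dx$, gets $\|\rho(t)-\rho_0\|_{\dot W^{-1,3}}\leq Ct\|u_0\|_{\dot H^{1/2}}$ directly from the transport equation and the critical bound \eqref{Eq.rhou_L3}, and then by duality $\bigl|\int(\rho-\rho_0)|u_t|^2\,\dx\bigr|\leq Ct\|u_0\|_{\dot H^{1/2}}\|\nabla u_t\|_{L^2}^2$, so the error carries the weight $t^{3/2}\|\nabla u_t\|_{L^2}^2$, integrable by \eqref{Eq.high_order_est_ut}. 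Your Lagrangian argument encodes the same mechanism (``$\rho-\rho_0$ is small for small $t$'') in flow coordinates, and it closes, but only once two points you leave implicit are actually proved: (i) the composition estimate $\|\dot u(t,X^{-1}(t,\cdot))-\dot u(t,\cdot)\|_{L^2}\leq\|\nabla\dot u(t)\|_{L^2}\int_0^t\|u(s)\|_{L^\infty}\,\ds$, which must be obtained by differentiating $s\mapsto\dot u(t,X(s,X^{-1}(t,y)))$ along the flow and using that each intermediate map is measure preserving; a straight-line interpolation between the identity and $X^{-1}(t,\cdot)$ gives no Jacobian control, so the naive mean-value argument does not yield an $L^2$ bound with a constant independent of $\rho_*$; and (ii) the quantitative displacement bound $\int_0^t\|u(s)\|_{L^\infty}\,\ds\leq Ct^{1/2}\|u_0\|_{\dot H^{1/2}}$, which does follow from $\|u\|_{L^\infty}\leq C\|\nabla u\|_{L^2}^{1/2}\|\nabla^2u\|_{L^2}^{1/2}$ together with \eqref{Eq.low_order_est} and \eqref{Eq.low_order_est_rhou}; the exponent $1/2$ is exactly what makes your error term $t^{1/4}\cdot t^{1/2}\|\nabla\dot u(t)\|_{L^2}=t^{3/4}\|\nabla\dot u(t)\|_{L^2}$ square-integrable by \eqref{Eq.high_order_est}. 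With (i) and (ii) supplied, your proof is correct, though noticeably longer than the paper's two-line duality argument; your treatment of the convective term is fine and is the same computation already used for \eqref{Eq.high_order_est_ut}.
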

\begin{proof}
By the density equation of \eqref{INS} and \eqref{Eq.rhou_L3}, we get
\begin{align}\label{Eq.rho_W-13}
\|\rho(t)-\rho_0\|_{\dot{W}^{-1,3}}\leq \int_0^t \|\rho u(s)\|_{L^3}\,\ds\leq t\|\rho u\|_{L^{\infty}(0,t;L^3)}\leq Ct\|u_0\|_{\dot H^{1/2}}. 
\end{align}
Note that
\begin{align*}
\int_0^{\infty}\int_{\R^3} t^{\f12}\rho_0 |u_t|^2\,\dx\,\dt=\int_0^{\infty}\int_{\R^3} t^{\f12}\rho(t) |u_t|^2\,\dx\,\dt-\int_0^{\infty}\int_{\R^3} t^{\f12}(\rho(t)-\rho_0) |u_t|^2\,\dx\,\dt.  
\end{align*}
By duality and \eqref{Eq.rho_W-13}, we can obtain
\begin{align*}
\Bigl|\int_{\R^3} (\rho(t)-\rho_0) |u_t|^2\,\dx\Bigr|&\leq \|\rho(t)-\rho_0\|_{\dot{W}^{-1,3}}\||u_t|^2\|_{\dot{W}^{1,\f32}}\leq 2\|\rho(t)-\rho_0\|_{\dot{W}^{-1,3}}\|\nabla u_t\|_{L^2}\|u_t\|_{L^6}\\
&\leq Ct\|u_0\|_{\dot{H}^{1/2}}\|\nabla u_t\|^2_{L^2}.
\end{align*}
Therefore, \eqref{Eq.rho_0ut_est} follows from \eqref{Eq.high_order_est_ut}.
\end{proof}

\begin{lemma}\label{Lem.nabla_u_L^2L^infty}
    Let $\varepsilon_0\in(0,1)$ be given by Lemma \ref{Lem.low_order_est}. If $\|u_0\|_{\dot H^{1/2}(\R^3)}<\varepsilon_0$, then
	\begin{equation}\label{Eq.nabla_u_L^2L^infty}
		\|t^{1/2}\nabla u\|_{L^2(\R^+; L^\infty(\R^3))}\leq C\|u_0\|_{\dot H^{1/2}(\R^3)}
	\end{equation}
	for some constant $C>0$ depending only on $\|\rho_0\|_{L^{\infty}}$.
\end{lemma}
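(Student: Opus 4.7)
The plan is to reduce the $L^\infty$ bound on $\nabla u$ to the already-controlled quantities $\sqrt\rho\dot u$ and $\nabla\dot u$ by solving the Stokes-type equation satisfied by $u$ at each fixed time. Rewriting the momentum equation of \eqref{INS} as
\begin{equation*}
-\Delta u+\nabla P=-\rho\dot u,\quad\dive u=0,
\end{equation*}
and applying the Leray projector $\mathbb{P}$ yields $-\Delta u=-\mathbb{P}(\rho\dot u)$, so $\nabla u=-\nabla(-\Delta)^{-1}\mathbb{P}(\rho\dot u)$. The heart of the argument is the pointwise kernel estimate
\begin{equation*}
\|\nabla(-\Delta)^{-1}f\|_{L^\infty(\R^3)}\leq C\|f\|_{L^2(\R^3)}^{1/2}\|f\|_{L^6(\R^3)}^{1/2},
\end{equation*}
which I would prove by representing $\nabla(-\Delta)^{-1}f(x)=c\int(x-y)|x-y|^{-3}f(y)\,\mathrm dy$, splitting the integral at radius $R$, estimating the inner piece by H\"older with $L^6$ (giving $R^{1/2}\|f\|_{L^6}$) and the outer piece by H\"older with $L^2$ (giving $R^{-1/2}\|f\|_{L^2}$), then optimizing in $R$.

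Applying this to $f=\mathbb{P}(\rho\dot u)$ and using that $\mathbb{P}$ is bounded on $L^p$ for $p\in(1,\infty)$, combined with $\rho\leq\|\rho_0\|_{L^\infty}$ and the Sobolev embedding $\dot H^1(\R^3)\hookrightarrow L^6(\R^3)$, I obtain
\begin{equation*}
\|\nabla u(t)\|_{L^\infty(\R^3)}\leq C\|\sqrt\rho\dot u(t)\|_{L^2(\R^3)}^{1/2}\|\nabla\dot u(t)\|_{L^2(\R^3)}^{1/2},
\end{equation*}
with $C$ depending only on $\|\rho_0\|_{L^\infty}$.

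Finally I integrate in time with the weight $t$, distributing it as $t=t^{1/2}\cdot t^{3/2}$ and applying the Cauchy--Schwarz inequality:
\begin{equation*}
\int_0^{\infty}t\|\nabla u(t)\|_{L^\infty}^2\,\dt\leq C\int_0^\infty \bigl(t^{1/4}\|\sqrt\rho\dot u(t)\|_{L^2}\bigr)\bigl(t^{3/4}\|\nabla\dot u(t)\|_{L^2}\bigr)\,\dt\leq C\|t^{1/4}\sqrt\rho\dot u\|_{L^2(\R^+;L^2)}\|t^{3/4}\nabla\dot u\|_{L^2(\R^+;L^2)}.
\end{equation*}
The two factors on the right are controlled by $C\|u_0\|_{\dot H^{1/2}(\R^3)}$ via \eqref{Eq.low_order_est_rhou} and \eqref{Eq.high_order_est}, which yields \eqref{Eq.nabla_u_L^2L^infty}. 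The only delicate step is verifying the $L^2$--$L^6$ interpolation bound for $\nabla(-\Delta)^{-1}$ in $\R^3$; once that kernel estimate is in hand, the rest is a mechanical combination of earlier lemmas, and there is no need to invoke any $L^1$-in-time estimate of $\nabla u$.
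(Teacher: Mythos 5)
Your proposal is correct and follows essentially the same route as the paper: the paper obtains the pointwise bound $\|\nabla u\|_{L^\infty}\le C\|\nabla^2u\|_{L^2}^{1/2}\|\nabla\dot u\|_{L^2}^{1/2}$ by applying $\|f\|_{L^\infty}\le C\|\nabla f\|_{L^2}^{1/2}\|\nabla f\|_{L^6}^{1/2}$ to $f=\nabla u$ together with the $L^6$ Stokes estimate \eqref{Eq.Stokes_est1}, which is the same inequality your kernel estimate for $\nabla(-\Delta)^{-1}\mathbb{P}(\rho\dot u)$ delivers (since $\|\nabla^2u\|_{L^2}\le C\|\sqrt\rho\dot u\|_{L^2}$ by \eqref{Eq.Stokes_est}), and then both arguments split the weight as $t^{1/4}\cdot t^{3/4}$ and invoke \eqref{Eq.low_order_est_rhou} and \eqref{Eq.high_order_est}. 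The only blemish is the phrase ``distributing it as $t=t^{1/2}\cdot t^{3/2}$,'' which contradicts your (correct) displayed splitting $t=t^{1/4}\cdot t^{3/4}$; this is a typo, not a gap.
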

\begin{proof}
    Applying the Stokes estimate to $-\Delta u+\nabla P=-\rho \dot u$ gives 
	\begin{equation}\label{Eq.Stokes_est1}
		\|\nabla^2u(t)\|_{L^6}\leq C\|\sqrt\rho\dot u(t)\|_{L^6}\leq C\|\dot u(t)\|_{L^6}\leq C\|\nabla \dot u(t)\|_{L^2}\quad\forall\ t\in\R^+.
	\end{equation}
    By the inequality $\|f\|_{L^\infty(\R^3)}\leq C\|\nabla f\|_{L^2(\R^3)}^{1/2}\|\nabla f\|_{L^6(\R^3)}^{1/2}$ and \eqref{Eq.Stokes_est1}, for each $t\in\R^+$ we have
    \begin{align*}
        \int_0^ts\|\nabla u(s)\|_{L^\infty}^2\,\ds&\leq C\int_0^ts\|\nabla^2u(s)\|_{L^6}\|\nabla^2u(s)\|_{L^2}\,\ds\leq C\int_0^ts\|\nabla\dot u(s)\|_{L^2}\|\nabla^2u(s)\|_{L^2}\,\ds\\
        &\leq \|s^{1/4}\nabla^2u\|_{L^2(\R^+;L^2)}\|s^{3/4}\nabla\dot u\|_{L^2(\R^+;L^2)}\leq C\|u_0\|^2_{\dot H^{1/2}},
    \end{align*}
    where in the last inequality we have used \eqref{Eq.low_order_est_rhou} and \eqref{Eq.high_order_est}. 
\end{proof}

\section{Proof of global existence}

This section is devoted to proving the existence of a global solution to \eqref{INS} under our assumptions (both in dimension $d=2$ and $d=3$).
\begin{proof}[Proof of the existence part of Theorem \ref{Thm.existence_2D}]
Let $d=2$, $0\leq\rho_0\in L^\infty$ and $u_0\in L^2(\R^2)$. We consider (where $\varepsilon\in(0,1)$)
$$ u_0^{\epsilon}\in C^{\infty}(\R^2) \with  \dive u_0^{\epsilon}=0 \andf \rho_0^{\epsilon}\in C^{\infty}(\R^2) \with \epsilon\leq \rho_0^{\epsilon}\leq \|\rho_0\|_{L^\infty(\R^2)} $$
such that
\begin{equation*}
u_0^{\epsilon}\to u_0 \text{~in~} L^2(\R^2), \quad \rho_0^{\epsilon}\rightharpoonup \rho_0 \text{~in~} L^{\infty} \text{~weak-*},
\andf~ \rho_0^{\epsilon}\to \rho_0 \text{~in~} L^{p}_{\text{loc}}(\R^2) \text{~if~} p<\infty,
\end{equation*}
as $\varepsilon\to 0+$. In light of the classical strong solution theory for the system \eqref{INS}, there exists a unique global smooth solution $(\rho^{\epsilon}, u^{\epsilon}, P^{\epsilon})$
corresponding to data $(\rho_0^{\epsilon},u_0^{\epsilon})$. Thus, the triple $(\rho^{\epsilon}, u^{\epsilon}, P^{\epsilon})$  satisfies all the a priori estimates of this subsections uniformly with respect to $\epsilon\in(0,1)$. Hence, $(\rho^{\epsilon}, u^{\epsilon})$ converges weakly (or weakly-*) to a limit $(\rho, u)$ as $\varepsilon\to0+$, up to subsequence. To show that the limit solves \eqref{INS} weakly, in view of standard compactness arguments, it suffices to prove that
\begin{equation}\label{Eq.convergence}
    \lim_{\varepsilon\to0+}\int_0^t\langle \rho^\varepsilon u^\varepsilon\otimes u^\varepsilon(s)-\rho u\otimes u(s), \nabla\varphi\rangle\,\ds=0\quad\forall\ t>0
\end{equation}
for any divergence-free function $\varphi\in C_c^\infty([0, +\infty)\times\R^2)$. We fix $t>0$. Let $\eta>0$. Since
\begin{align*}
    |\langle \rho^\varepsilon u^\varepsilon\otimes u^\varepsilon(s), \nabla\varphi\rangle|&\leq \|\sqrt{\rho^\varepsilon}u^\varepsilon\|_{L^\infty(\R^+; L^2(\R^2))}^2\|\nabla \varphi\|_{L^\infty(\R^+\times\R^2)}\leq C,\\
    |\langle \rho u\otimes u(s), \nabla\varphi\rangle|&\leq \|\sqrt{\rho}u\|_{L^\infty(\R^+; L^2(\R^2))}^2\|\nabla \varphi\|_{L^\infty(\R^+\times\R^2)}\leq C,
\end{align*}
for all $\varepsilon\in(0,1)$, $s>0$, where $C>0$ is a constant depending only on $\|\sqrt{\rho_0}u_0\|_{L^2}$; taking $\delta_0:=\min\{t/2, \eta/(4C)\}>0$, we have
\[\int_0^{\delta_0}\left|\langle \rho^\varepsilon u^\varepsilon\otimes u^\varepsilon(s)-\rho u\otimes u(s), \nabla\varphi\rangle\right|\,\ds<\frac\eta2,\quad\forall\ \varepsilon\in(0,1).\]
To prove \eqref{Eq.convergence}, it suffices to show that
\begin{equation}\label{Eq.convergence_delta}
    \lim_{\varepsilon\to0+}\int_{\delta_0}^t\langle \rho^\varepsilon u^\varepsilon\otimes u^\varepsilon(s)-\rho u\otimes u(s), \nabla\varphi\rangle\,\ds=0.
\end{equation}
Indeed, assuming that $\operatorname{supp}_x\varphi(s)\subset B(0, R)$ for all $s\in[\delta_0, t]$, on one hand, by the weak convergence of $\rho^\varepsilon $ to $\rho $ in $L^2([0,T]\times B_R)$ (up to subsequence) and Lemma \ref{u_L^infty} (and the compact support property of $\varphi$),  
we have
\begin{equation}\label{Eq.convergence_delta1}
    \lim_{\varepsilon\to0+}\int_{\delta_0}^t\langle (\rho^\varepsilon-\rho) u\otimes u(s), \nabla\varphi\rangle\,\ds=0.
\end{equation}
On the other hand,  by Lemma \ref{Lem.high_order_est_2D}, Lemma \ref{u_L^infty} (and Lemma 2.2, Lemma A.2 of \cite{Hao_Shao_Wei_Zhang}, by adjusting $R>0$ to be larger is necessary) we know that  $u^\varepsilon$ is uniformly bounded in $H^1([\delta_0,t]\times B_R)$ (for fixed $\delta_0\in(0,t)$), hence the compact embedding $H^1([\delta_0,t]\times B_R)\ssubset L^2([\delta_0,t]\times B_R)$ implies $u^\varepsilon\to u$ in $L^2([\delta_0,t]\times B_R)$ up to subsequence,
 thus $u^\varepsilon\otimes u^\varepsilon\to u\otimes u$ in $L^1([\delta_0,t]\times B_R)$  
 as $\varepsilon\to0+$, which along with the uniform boundedness of $\rho^\varepsilon $ in $L^{\infty}$ implies that
\begin{equation}\label{Eq.convergence_delta2}
    \lim_{\varepsilon\to0+}\int_{\delta_0}^t\left|\langle \rho^\varepsilon (u^\varepsilon\otimes u^\varepsilon-u\otimes u)(s), \nabla\varphi\rangle\right|\,\ds=0.
\end{equation}
Now \eqref{Eq.convergence_delta} follows from \eqref{Eq.convergence_delta1} and \eqref{Eq.convergence_delta2}. 
\end{proof}

\begin{proof}[Proof of the existence part of Theorem \ref{Thm.existence}]
Let $d=3$. Let $\varepsilon_0\in(0,1)$ be given by Lemma \ref{Lem.low_order_est}. Assume that $u_0\in\dot H^{1/2}(\R^3)$ with 
$\|u_0\|_{\dot H^{1/2}}<\varepsilon_0$ and $0\leq \rho_0\leq \|\rho_0\|_{L^{\infty}}$ for some constant $\|\rho_0\|_{L^{\infty}}>0$ with $\rho_0\not\equiv0$. We are going to show that there exists a global weak solution $(\rho, u, \nabla P)$ to \eqref{INS} with initial data $(\rho_0, u_0)$. The  idea is to take advantage of classical result to construct smooth solutions corresponding to smoothed-out approximate data with no vacuum, then to pass to the limit. More precisely, for $\delta\in(0,1/2)$, we consider
\[u_0^\delta \in C^\infty(\R^3)\with \dive u_0^\delta=0 \andf  \rho_0^\delta\in C^\infty(\R^3)\with \delta\leq \rho_0^\delta\leq \|\rho_0\|_{L^{\infty}}\]
such that
\[u_0^\delta\to u_0 \text{ in } \dot H^{1/2},\quad \|u_0^\delta\|_{\dot H^{1/2}}<\varepsilon_0, \quad \rho_0^{\delta}\rightharpoonup \rho_0 \text{~in~} L^{\infty} \text{~weak-*},\andf~ \rho_0^{\delta}\to \rho_0 \text{~in~} L^{p}_{\text{loc}} \text{~if~} p<\infty,\]
as $\delta\to 0+$. By the classical theory of (INS), for each $\delta\in(0,1/2)$, \eqref{INS} has a unique local smooth solution $(\rho^\delta, u^\delta)$ on $[0, T_*^\delta)$ with initial data $(\rho_0^\delta, u_0^\delta)$. We also have \eqref{Eq.low_order_est} for $(\rho^\delta, u^\delta)$. Thus, $\|u_0^\delta\|_{\dot H^{1/2}}<\varepsilon_0$ and Remark \ref{Rmk.global} implies that $T_*^\delta=+\infty$, i.e., $(\rho^\delta, u^\delta)$ is a global smooth solution. Moreover, we have \eqref{Eq.low_order_est_rhou}, \eqref{Eq.high_order_est} for $(\rho^\delta, u^\delta)$. Here we recall that the constants in \eqref{Eq.low_order_est}, \eqref{Eq.low_order_est_rhou}, \eqref{Eq.high_order_est}, \eqref{Eq.high_order_est_ut} and \eqref{Eq.nabla_u_L^2L^infty} depend only on $\|\rho_0\|_{L^{\infty}}$, so these inequalities are uniform in $\delta\in(0,1/2)$. As a consequence, a standard compactness argument (see \cite{Paicu_Zhang_Zhang_CPDE}) combining with an argument similar to the 2-D case (here we use $L^6$ for $d=3$ instead of $L^\infty$ for $d=2$ above) implies the convergence of $(\rho^\delta, u^\delta)$ to a weak solution $(\rho, u)$ of \eqref{INS} with initial data $(\rho_0, u_0)$. This completes the proof of the existence part of Theorem \ref{Thm.existence}.
\end{proof}

\section{Proof of  the uniqueness}

The goal of this section is to prove Theorem \ref{Thm.uniqueness}, and to show that it implies the uniqueness part of Theorem \ref{Thm.existence_2D} and Theorem \ref{Thm.existence}.
\subsection{Preliminary lemmas}
\begin{lemma}\label{Lem.transport}
    Fix $t>0$. Assume that $\bar u=\bar u(s,x):(0, +\infty)\times\R^d\to\R^d$  satisfies $s^{1/2}\nabla{\bar u}\in L^2(\R^+; L^{\infty}(\R^d))$. Let $p\in(1,+\infty)$ and $\varphi(x)\in \dot W^{1,p}(\R^d)$. Then there is a unique solution $\phi=\phi(s,x):(0, t]\times\R^d\to\R^d$ to
    \begin{equation}\label{Eq.transport_eq}
        \pa_s\phi+\bar u\cdot\nabla \phi=0,\quad (s,x)\in (0, t]\times\R^d,\qquad \phi(t,x)=\varphi(x),\quad x\in\R^d
    \end{equation}
    such that
    \begin{equation}\label{Eq.transport_est}
        \|\nabla \phi(s)\|_{L^p(\R^d)}\leq C\|\nabla \varphi\|_{L^p(\R^d)}\mathrm{e}^{C|\ln(t/s)|^{1/2}}\quad \forall\ s\in(0, t],
    \end{equation}
    where $C>0$ is a constant depending only on $\|s^{1/2}\nabla {\bar u}\|_{L^2(\R^+; L^{\infty}(\R^d))}$.
\end{lemma}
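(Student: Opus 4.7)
The plan is to use the method of characteristics, with the key observation that the Cauchy--Schwarz inequality converts the weight $s^{1/2}$ on $\nabla\bar u$ into the $|\ln(t/s)|^{1/2}$ appearing in the estimate. Explicitly, for any $0<\tau\le t$,
\[
\int_\tau^t\|\nabla\bar u(s)\|_{L^\infty}\,\ds
\le\Bigl(\int_\tau^t s^{-1}\,\ds\Bigr)^{1/2}
\Bigl(\int_\tau^t s\|\nabla\bar u(s)\|_{L^\infty}^2\,\ds\Bigr)^{1/2}
\le |\ln(t/\tau)|^{1/2}\|s^{1/2}\nabla\bar u\|_{L^2(\R^+;L^\infty)}.
\]
In particular $\nabla\bar u\in L^1(\tau,t;L^\infty)$ for every $\tau\in(0,t]$, although this bound degenerates as $\tau\to 0^+$.

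First I would construct the (backward) flow. By the inequality above, on any slab $[\tau,t]\times\R^d$ the field $\bar u$ is (after a standard mollification in space) Lipschitz in $x$ with integrable-in-$s$ Lipschitz constant, so the Cauchy--Lipschitz theorem yields a unique flow $X_{s,t}:\R^d\to\R^d$ satisfying $\partial_sX_{s,t}(y)=\bar u(s,X_{s,t}(y))$ with $X_{t,t}(y)=y$, for $s\in[\tau,t]$. Since the bound above gives control uniform in $\tau$ once we track the composition, these flows are consistent, and on $(0,t]\times\R^d$ we define $\phi(s,x):=\varphi(X_{s,t}^{-1}(x))$. This $\phi$ solves \eqref{Eq.transport_eq} pointwise (and in the distributional sense), and uniqueness in the natural class follows either from the DiPerna--Lions theory or by running the forward flow.

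Next I would derive the $L^p$ gradient estimate. For a smooth approximation (mollify $\bar u$ in $x$, solve, pass to the limit; the bounds below survive the limit because they depend only on $\|s^{1/2}\nabla\bar u\|_{L^2L^\infty}$), differentiate \eqref{Eq.transport_eq} to get
\[
\pa_s(\pa_i\phi)+\bar u\cdot\nabla(\pa_i\phi)=-(\pa_i\bar u^j)\pa_j\phi,
\]
multiply by $|\nabla\phi|^{p-2}\pa_i\phi$, sum in $i$, integrate over $\R^d$, and use integration by parts on the transport term (picking up at worst a $\|\mathrm{div}\,\bar u\|_{L^\infty}\le\sqrt d\,\|\nabla\bar u\|_{L^\infty}$ contribution, so no divergence-free hypothesis is needed) to obtain
\[
\Bigl|\frac{\mathrm d}{\ds}\|\nabla\phi(s)\|_{L^p}^p\Bigr|\le C_p\|\nabla\bar u(s)\|_{L^\infty}\|\nabla\phi(s)\|_{L^p}^p.
\]
Integrating this backward in time from $t$ to $s$ and invoking Gr\"onwall yields
\[
\|\nabla\phi(s)\|_{L^p}\le\|\nabla\varphi\|_{L^p}\exp\Bigl(C_p\int_s^t\|\nabla\bar u(\tau)\|_{L^\infty}\,\mathrm d\tau\Bigr),
\]
and plugging in the Cauchy--Schwarz bound from the first paragraph gives \eqref{Eq.transport_est}.

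The main technical point to watch is justifying the $L^p$ energy identity when $\varphi$ is only in $\dot W^{1,p}$ and $\bar u$ is not smooth; the cleanest route is to mollify both $\bar u$ and $\varphi$ in $x$, run the above identity for the smooth system, and pass to the limit using weak-$*$ lower semicontinuity of the $L^p$ norm on the left and strong convergence on the right (both controlled by the same bound \eqref{Eq.transport_est}). A minor but important point is that, because $\int_0^t\|\nabla\bar u\|_{L^\infty}\,\ds$ may be infinite, the solution is only constructed on $(0,t]$, not up to $s=0$; this is consistent with the statement of the lemma since the bound on the right-hand side of \eqref{Eq.transport_est} blows up as $s\to 0^+$.
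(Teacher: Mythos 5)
Your proposal is correct and follows essentially the same route as the paper: the key point in both is the Cauchy--Schwarz step converting the weight $s^{1/2}$ on $\nabla\bar u$ into the factor $|\ln(t/s)|^{1/2}$, combined with the classical exponential transport estimate $\|\nabla\phi(s)\|_{L^p}\leq C\|\nabla\varphi\|_{L^p}\exp\bigl(\int_s^t\|\nabla\bar u(\tau)\|_{L^\infty}\,\mathrm d\tau\bigr)$. The only difference is that the paper simply cites this classical estimate (Theorem 3.2 of \cite{BCD}) for existence, uniqueness and the bound, whereas you re-derive it via characteristics and an $L^p$ energy argument, which is a standard and acceptable substitute.
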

\begin{proof}
    The classical theory on transport equations (\cite{BCD}, Theorem 3.2) implies the existence and uniqueness of the solution $\phi$, with the estimate
    \begin{equation*}
        \|\nabla \phi(s)\|_{L^p(\R^d)}\leq C\|\nabla \varphi\|_{L^p(\R^d)}\exp\left(\int_s^t\|\nabla \bar{u}(\tau)\|_{L^\infty(\R^d)}\,\mathrm d\tau\right),\quad \forall\ s\in(0, t]
    \end{equation*}
    for some constant $C>0$. By $\tau^{1/2}\nabla\bar u\in L^2(\R^+;L^{\infty}(\R^d))$ we have
\begin{align*}
\int_s^t \|\nabla \bar{u}(\tau)\|_{L^{\infty}}\,\mathrm{d}\tau&=\int_s^t \tau^{-\f12}\cdot\tau^{\f12}\|\nabla \bar{u}(\tau)\|_{L^{\infty}}\,\mathrm{d}\tau
\leq |\ln(t/s)|^{\f12}\|\tau^{\f12}\nabla \bar{u}(\tau)\|_{L^2(0,t;L^{\infty})}\nonumber\\
&\leq C|\ln(t/s)|^{\f12}.
\end{align*}

This completes the proof.
\end{proof}

\begin{lemma}\label{Lem.Hardy}
    Let $T>0$ and $f:(0, T]\to [0, +\infty]$ be such that $f\in L^4(0, T)$. Let $A>0$ and
    \begin{equation}\label{Eq.F(t)_def}
        F(t):=\frac1t\int_0^tf(s)\mathrm e^{A|\ln(t/s)|^{1/2}}\,\ds\quad \forall\ t\in(0, T].
    \end{equation}
    Then we have $F\in L^4(0, T)$ with
    \begin{equation}\label{Eq.F_L^4}
        \|F\|_{L^4(0, T)}\leq C_A\|f\|_{L^4(0, T)},
    \end{equation}
    where $C_A>0$ is a constant depending only on $A$.
\end{lemma}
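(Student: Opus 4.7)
The plan is to reduce the estimate to a scale-invariant convolution-type inequality by the substitution $s = tu$, and then apply Minkowski's integral inequality. After this change of variables,
\begin{equation*}
F(t) = \int_0^1 f(tu)\,K(u)\,du, \qquad \text{where } K(u) := e^{A\sqrt{-\ln u}}.
\end{equation*}
The key observation is that $K$ only depends on the dimensionless ratio $u = s/t$, so $F$ is a superposition of dilations of $f$ with a fixed kernel, and this decouples the $t$-integration from the $u$-integration.

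Next, I would apply Minkowski's inequality in integral form to get
\begin{equation*}
\|F\|_{L^4(0,T)} \le \int_0^1 K(u)\,\|f(\cdot\,u)\|_{L^4(0,T)}\,du.
\end{equation*}
By a change of variable $r = tu$ in the inner norm and using $uT \le T$ with $f \ge 0$,
\begin{equation*}
\|f(\cdot\,u)\|_{L^4(0,T)}^4 \;=\; \frac{1}{u}\int_0^{uT} f(r)^4\,dr \;\le\; \frac{1}{u}\,\|f\|_{L^4(0,T)}^4,
\end{equation*}
so $\|f(\cdot\,u)\|_{L^4(0,T)} \le u^{-1/4}\|f\|_{L^4(0,T)}$. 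This yields
\begin{equation*}
\|F\|_{L^4(0,T)} \le \|f\|_{L^4(0,T)} \int_0^1 u^{-1/4} e^{A\sqrt{-\ln u}}\,du.
\end{equation*}

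Finally, I would check that the remaining integral $\int_0^1 u^{-1/4} e^{A\sqrt{-\ln u}}\,du$ is a finite constant depending only on $A$. Setting $v = -\ln u$ converts it to
\begin{equation*}
\int_0^1 u^{-1/4} e^{A\sqrt{-\ln u}}\,du = \int_0^\infty e^{-3v/4 + A\sqrt{v}}\,dv,
\end{equation*}
which converges since the exponent tends to $-\infty$ like $-3v/4$ as $v \to \infty$ (the $\sqrt v$ term is subdominant), giving the desired bound $C_A$. There is no real obstacle here; the mild subtlety is just confirming that the exponential weight $e^{A\sqrt{-\ln u}}$ is integrable against $u^{-1/4}$, which is exactly where the gap between the factor $u^{-1/4}$ (from the dilation scaling in $L^4$) and the singularity of $K$ at $u=0$ is crucial—any power $u^{-\alpha}$ with $\alpha < 1$ would suffice, since $e^{A\sqrt{v}}$ grows subexponentially.
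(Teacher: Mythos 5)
Your proposal is correct and follows essentially the same route as the paper: rescale $s=tu$ so the kernel depends only on the ratio, apply Minkowski's integral inequality, use the $L^4$ dilation scaling $\|f(u\,\cdot)\|_{L^4(0,T)}\le u^{-1/4}\|f\|_{L^4(0,T)}$, and check that $\int_0^1 u^{-1/4}e^{A\sqrt{-\ln u}}\,du$ is finite (the paper does this by noting $A\sqrt{v}\le v/2$ for large $v$, you by the substitution $v=-\ln u$; these are the same observation). No issues.
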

\begin{proof}
    By \eqref{Eq.F(t)_def}, we have
    \[F(t)=\int_0^1f(\tau t)\mathrm e^{A|\ln\tau|^{1/2}}\,\mathrm d\tau,\quad\forall\ t\in(0, T].\]
    Hence
    \begin{align*}
        \|F\|_{L^4(0, T)}\leq \int_0^1\|f(\tau \cdot)\|_{L^4(0, T)}\mathrm e^{A|\ln\tau|^{1/2}}\,\mathrm d\tau=\int_0^1\|f\|_{L^4(0, T)}\tau^{-1/4}\mathrm e^{A|\ln\tau|^{1/2}}\,\mathrm d\tau.
    \end{align*}
    Note that there exists $\tau_0=\tau_0(A)\in(0,1)$ such that $A|\ln(1/\tau)|^{1/2}\leq \frac12|\ln(1/\tau)|$ for $\tau\in (0,\tau_0)$, then
    \[\int_0^{\tau_0}\tau^{-1/4}\mathrm e^{A|\ln\tau|^{1/2}}\,\mathrm d\tau\leq \int_0^{\tau_0}\tau^{-1/4}\mathrm e^{\ln (\tau^{-1/2})}\,\mathrm d\tau=\int_0^{\tau_0}\tau^{-3/4}\,\mathrm d\tau=4\tau_0^{1/4},\]
    thus
    \[\int_0^1\tau^{-1/4}\mathrm e^{A|\ln\tau|^{1/2}}\,\mathrm d\tau\leq 4\tau_0^{1/4}+\int_{\tau_0}^1\tau^{-1/4}\mathrm e^{A|\ln\tau|^{1/2}}\,\mathrm d\tau=C_A.\]
    
    This completes the proof.
\end{proof}

\subsection{Uniqueness in the 2-D case of Theorem \ref{Thm.uniqueness}}

\begin{proof}[Proof of Theorem \ref{Thm.uniqueness} for $d=2$]
    Assume that $0<\rho_0\in L^\infty(\R^2)$ has a positive lower bound and $u_0\in L^2(\R^2)$. Let $(\rho, u, \nabla P)$ and $(\bar \rho, \bar u, \nabla\bar P)$ be two weak solutions to \eqref{INS} with the same initial data $(\rho_0, u_0)$ and satisfies all the hypotheses listed in Theorem \ref{Thm.uniqueness}. We denote $\delta\!\rho=\rho-\bar\rho$ and $\delta\!u=u-\bar u$. Then $(\delta\!\rho, \delta\!u)$ satisfies 
    \begin{equation}\label{Eq.delta u}
     \left\{
     \begin{array}{l}
     \partial_t\delta\!\rho+\bar{u}\cdot\nabla \delta\!\rho+\delta\!u\cdot\nabla\rho=0,\qquad (t,x)\in \mathbb{R}^{+}\times\mathbb{R}^{d},\\
     \rho(\partial_t\delta\!u+u\cdot\nabla \delta\!u)-\Delta \delta\!u+\nabla \delta\!P=-\delta\!\rho\dot{\bar{u}}-\rho\delta\!u\cdot\nabla\bar{u},\\
     \dive \delta\!u = 0,\\
     (\delta\!\rho, \delta\!u)|_{t=0} =(0, 0).
     \end{array}
     \right.
     \end{equation}
Here $\dot{\bar{u}}:=\pa_t\bar{u}+\bar{u}\cdot\nabla\bar{u}$. Let $T_1\in(0, T)$ be small enough. For all $t\in (0, T_1]$, we set
    \begin{equation}\label{Eq.D_t}
        D(t):=\sup_{s\in [0,t]}\|\sqrt{\rho}\delta\!u(s)\|_{L^2(\R^2)}+\|\nabla \delta\!u\|_{L^2(0,t;L^2(\R^2))}.
    \end{equation}
    Testing \eqref{Eq.delta u} against $\delta\!u$ gives 
    \begin{equation}
        \frac12 \frac{\mathrm d}{\dt}\|\sqrt\rho\delta\!u\|_{L^2}^2+\|\nabla \delta\!u(t)\|^2_{L^2}
			=-\mathrm I(t)-\mathrm{II}(t) \quad\forall\ t\in(0, T_1],
    \end{equation}
    where 
    \[\mathrm I(t):=\langle\delta\!\rho(t), \dot{\bar u}\cdot\delta\!u(t)\rangle,\quad \mathrm{II}(t):=\int_{\R^2}(\rho\delta\!u\cdot\nabla\bar u)\cdot \delta u(t,x)\,\dx,\quad \forall\ t\in(0, T_1].\]
    Since (thanks to the positive lower bound of $\rho$)
    \begin{align*}
        |\mathrm{II}(t)|&\leq \|\sqrt\rho\delta\!u(t)\|_{L^4}^2\|\nabla \bar u(t)\|_{L^2}\leq C\|\sqrt\rho\delta\!u(t)\|_{L^2}\|\nabla \delta\!u(t)\|_{L^2}\|\nabla \bar u(t)\|_{L^2}\\
        &\leq \f12\|\nabla \delta\!u(t)\|_{L^2}^2
        +C\|\sqrt\rho\delta\!u(t)\|_{L^2}^2\|\nabla \bar u(t)\|_{L^2}^2,
    \end{align*}
    we have
    \[\frac{\mathrm d}{\dt}\|\sqrt\rho\delta\!u\|_{L^2}^2+\|\nabla \delta\!u(t)\|^2_{L^2}\leq 2|\mathrm I(t)|+C\|\sqrt\rho\delta\!u(t)\|_{L^2}^2\|\nabla \bar u(t)\|_{L^2}^2,\quad\forall\ t\in(0, T_1].\]
    It follows from Gr\"onwall's lemma  and $\nabla\bar u\in L^2(0, T; L^2(\R^2))$ that
    \begin{equation*}
        \|\sqrt\rho\delta\!u(t)\|_{L^2}^2+\|\nabla \delta\!u\|^2_{L^2(0, t; L^2)}\leq C_*\int_0^t|\mathrm I(s)|\,\ds,\quad\forall\ t\in(0, T_1],
    \end{equation*}
    where $C_*>0$ depends only on $\|\nabla\bar u\|_{L^2(0, T; L^2)}$ and $\|\rho_0\|_{L^{\infty}}$. Thus,
    \begin{equation}\label{Eq.D_leq_I}
        D(t)^2\leq C_*\int_0^t|\mathrm I(s)|\,\ds,\quad\forall\ t\in(0, T_1].
    \end{equation}

    For any fixed $t\in(0, T_1]$, by Lemma \ref{Lem.transport}, there exists a unique $\phi(s,x;t)$ solving
    \[\pa_s\phi(s,x;t)+\bar u(s,x)\cdot\nabla \phi(s, x; t)=0,\ \ (s,x)\in(0, t]\times\R^2,\quad \phi(t,x;t)=(\dot{\bar u}\cdot\delta u)(t,x)\]
    with the estimate
    \begin{equation}\label{Eq.nabla_phi_2D}
        \|\nabla\phi(s,\cdot;t)\|_{L^{4/3}}\leq C\|\nabla(\dot{\bar u}\cdot\delta\!u)(t)\|_{L^{4/3}}\,\mathrm e^{C|\ln(t/s)|^{1/2}},\quad\forall\ s\in(0, t].
    \end{equation}
    Using the first equation of \eqref{Eq.delta u} and integration by parts, we have
    \begin{align}
\frac{\mathrm{d}}{\ds} \langle\delta\!\rho(s), \phi(s,\cdot;t)\rangle
&=\langle -\dive\{ \delta\!\rho\bar{u}\},\phi\rangle-\langle \dive \{\rho \delta\!u\},\phi\rangle-\langle \delta\!\rho,\bar{u}\cdot\nabla\phi\rangle\nonumber\\
&=\langle \rho\delta\!u,\nabla\phi(s,\cdot;t)\rangle,\quad\forall\ s\in(0, t]\label{Eq.1_2D}
\end{align}
then integrating over $(0, t)$, by \eqref{Eq.nabla_phi_2D} we have
    \begin{align*}
        |\mathrm I(t)|&=|\langle\delta\!\rho(t), \phi(t,\cdot;t)\rangle|\leq \int_0^t|\langle\rho\delta\!u(s),\nabla\phi(s,\cdot;t)\rangle|\,\ds\leq \int_0^t\|\rho\delta\!u(s)\|_{L^4}\|\nabla\phi(s,\cdot;t)\|_{L^{4/3}}\,\ds\\
        &\leq \|\nabla(\dot{\bar u}\cdot\delta\!u)(t)\|_{L^{4/3}}\int_0^t \|\rho\delta\!u(s)\|_{L^4}\,\mathrm{e}^{C|\ln(t/s)|^{1/2}}\,\ds\\
        &=\|t\nabla(\dot{\bar u}\cdot\delta\!u)(t)\|_{L^{4/3}}\cdot\frac1t\int_0^t \|\rho\delta\!u(s)\|_{L^4}\,\mathrm{e}^{C|\ln(t/s)|^{1/2}}\,\ds=:\|t\nabla(\dot{\bar u}\cdot\delta\!u)(t)\|_{L^{4/3}}\cdot F(t)
    \end{align*}
    for all $t\in(0, T_1]$. Lemma \ref{Lem.Hardy} implies that 
    \begin{equation}
        \|F\|_{L^4(0, t)}\leq C\|\|\rho\delta\!u(s)\|_{L^4}\|_{L^4(0, t)}=C\|\rho\delta\!u\|_{L^4(0, t; L^4)},\quad\forall\ t\in(0, T_1].
    \end{equation}
    Hence by H\"older's inequality we obtain
    \begin{equation}\label{Eq.I_integral_est}
        \int_0^t|I(s)|\,\ds\leq \|s\nabla(\dot{\bar u}\cdot\delta\!u)\|_{L^{4/3}(0, t; L^{4/3})}\|\rho\delta\!u\|_{L^4(0, t; L^4)}, \quad\forall\ t\in(0, T_1].
    \end{equation}

    Moreover, by H\"older's inequality and Sobolev embedding we have
    \begin{align*}
        &\|s\nabla(\dot{\bar u}\cdot\delta\!u)\|_{L^{4/3}(0, t; L^{4/3})}=\|s(\nabla\dot{\bar u}\cdot\delta\!u+\dot{\bar u}\cdot\nabla\delta\!u)\|_{L^{4/3}(0, t; L^{4/3})}\\
        &\leq  \|s\nabla\dot{\bar u}\|_{L^{2}(0, t; L^{2})}\|\delta\!u\|_{L^{4}(0, t; L^{4})}+\|s\dot{\bar u}\|_{L^{4}(0, t; L^{4})}\|\nabla\delta\!u\|_{L^{2}(0, t; L^{2})},
    \end{align*}
    and (since $\rho(t)\in L^\infty(\R^2)$ is bounded away from zero)
    \begin{align*}
        \|\delta\!u\|_{L^4(0, t; L^4)}\leq C\left\|\|\sqrt\rho\delta\!u\|_{L^2}^{1/2}\|\nabla\delta\!u\|_{L^2}^{1/2}\right\|_{L^4(0, t)}\leq C\|\sqrt\rho\delta\!u\|_{L^\infty(0, t;L^2)}^{1/2}\|\nabla\delta\!u\|_{L^2(0, t; L^2)}^{1/2}\leq CD(t)
    \end{align*}
    for all $t\in(0, T_1]$, then we get (using the fact that $\rho(t)\in L^\infty(\R^2)$ is bounded away from zero) 
 \begin{align*}
        \|s\nabla(\dot{\bar u}\cdot\delta\!u)\|_{L^{4/3}(0, t; L^{4/3})}
        &\leq C D(t)\left(\|s\nabla\dot{\bar u}\|_{L^{2}(0, t; L^{2})}+\|s\dot{\bar u}\|_{L^{4}(0, t; L^{4})}\right)\\
        &\leq CD(t)\left(\|s\nabla\dot{\bar u}\|_{L^{2}(0, t; L^{2})}+\|s\dot{\bar u}\|_{L^{\infty}(0, t; L^{2})}^{1/2}\|s\nabla\dot{\bar u}\|_{L^{2}(0, t; L^{2})}^{1/2}\right)
    \end{align*}
    for all $t\in(0, T_1]$. Thus, \eqref{Eq.I_integral_est} implies that
    \begin{align}\label{Eq.I_integral_est2}
        \int_0^t|I(s)|\,\ds\leq C D(t)^2 \|s\nabla\dot{\bar u}\|_{L^{2}(0, t; L^{2})}^{1/2},\quad\forall\ t\in(0, T_1]
    \end{align}
    for some constant $C>0$ independent of $t$ and $T_1$. Combining \eqref{Eq.D_leq_I} and \eqref{Eq.I_integral_est2}, we obtain
    \begin{equation}\label{Eq.D_leq_D}
        D(t)^2\leq CD(t)^2\|s\nabla\dot{\bar u}\|_{L^{2}(0, t; L^{2})}^{1/2},\quad\forall\ t\in(0, T_1].
    \end{equation}
    As $t\nabla \dot{\bar u}\in L^2(0, T; L^2)$, we can take $T_1>0$ small enough such that $$C\|s\nabla\dot{\bar u}\|_{L^{2}(0, T_1; L^{2})}^{1/2}<1/2,$$
    then \eqref{Eq.D_leq_D} implies $0\leq D(t)^2\leq D(t)^2/2$ for all $t\in(0, T_1]$, hence $D(t)\equiv0$ for $t\in(0, T_1]$.  Therefore, we have $\delta\! u\equiv0$ on $[0, T_1]$. Then by the first equation of \eqref{Eq.delta u}, we get $\delta\!\rho\equiv0$ on $[0, T_1]$. This proves the uniqueness on $[0, T_1]$. The uniqueness on the whole time interval $[0,+\infty)$ can be obtained by a bootstrap argument.
\end{proof}

\subsection{Uniqueness in the 3-D case of Theorem \ref{Thm.uniqueness}}
\begin{proof}[Proof of Theorem \ref{Thm.uniqueness} for $d=3$]
Assume that $0\leq\rho_0\in L^\infty(\R^3)$ and $u_0\in \dot H^{1/2}(\R^3)$. Let $(\rho, u, \nabla P)$ and $(\bar \rho, \bar u, \nabla\bar P)$ be two weak solutions to \eqref{INS} with the same initial data $(\rho_0, u_0)$ and satisfies all the hypotheses listed in Theorem \ref{Thm.uniqueness}. We denote $\delta\!\rho:=\rho-\bar{\rho}$ and $\delta\!u:=u-\bar{u}$. Then ($\delta\!\rho,\delta\!u$) satisfies \eqref{Eq.delta u}.
Let $T>0$. For all $t\in(0,T]$, set
\begin{align}
&X(t):=\sup_{s\in (0,t]}s^{-\f34}\|\delta\!\rho(s)\|_{\dot{W}^{-1,3}(\R^3)},\label{X_t}\\
&Y(t):=\Bigl(\sup_{s\in [0,t]}\|\sqrt{\rho}\delta\!u(s)\|^2_{L^2(\R^3)}+\|\nabla \delta\!u\|^2_{L^2(0,t;L^2(\R^3))}\Bigr)^{1/2}.\label{Y_t}
\end{align}
For $\varphi\in C^1_c(\R^3)$, by Lemma \ref{Lem.transport}, there exists a unique $\phi(s,x)$ solving
\begin{align*}
\partial_s\phi+\bar{u}\cdot\nabla\phi=0,\quad (s,x)\in(0,t]\times\R^3, \qquad \phi(t,x)=\varphi(x),  
\end{align*}
with the estimate
\begin{align}\label{transport}
\|\nabla\phi(s)\|_{L^{\f32}}\leq C\|\nabla\varphi\|_{L^{\f32}}e^{C|\ln(t/s)|^{1/2}},\quad\forall\ s\in(0, t].
\end{align}
Note that
\begin{align}\label{Eq.ln_integrable}
\int_0^t  e^{C|\ln(t/s)|^{1/2}} \,\ds=t\int_0^1 e^{C|\ln(\tau)|^{1/2}} \,\mathrm{d}\tau\leq Ct\int_0^1\tau^{-\f12}\,\mathrm{d}\tau\leq Ct.  
\end{align}
Similarly with \eqref{Eq.1_2D}, we have
\begin{align*}
\frac{\mathrm{d}}{\ds} \langle\delta\!\rho(s), \phi(s)\rangle
&=\langle -\dive\{ \delta\!\rho\bar{u}\},\phi\rangle-\langle \dive \{\rho \delta\!u\},\phi\rangle-\langle \delta\!\rho,\bar{u}\cdot\nabla\phi\rangle
=\langle \rho\delta\!u,\nabla\phi\rangle.
\end{align*}
Integrating over $(0,t)$, and using \eqref{transport} and \eqref{Eq.ln_integrable} yield that
\begin{align}
|\langle\delta\!\rho(t), \phi(t)\rangle|&\leq
\int_0^t |\langle \rho \delta\!u,\nabla\phi\rangle|\,\ds \leq \int_0^t \|\rho\delta\!u(s)\|_{L^3}\|\nabla\phi\|_{L^{\f32}}\,\ds\nonumber\\
&\leq Ct^{\f34}\|\nabla\varphi\|_{L^{\f32}}\|\rho\delta\!u\|_{L^4(0,t;L^3)}\label{a}.
\end{align}
By H\"older's inequality and Sobolev embedding, we have
\begin{align*}
\int_0^t\|\rho\delta\!u\|^4_{L^3}\,\ds \leq \int_0^t\|\rho\delta\!u\|^2_{L^2}\|\rho\delta\!u\|^2_{L^6}\,\ds\leq C\|\sqrt{\rho}\delta\!u\|^2_{L^{\infty}(0,t;L^2)}\|\nabla \delta\!u\|^2_{L^2(0,t;L^2)}, 
\end{align*}
then it follows from \eqref{X_t}, \eqref{Y_t} and \eqref{a} that
\begin{align}\label{Eq.X<Y}
  X(t)\leq CY(t),\quad\forall\ t\in(0, T].  
\end{align}

Next, testing \eqref{Eq.delta u} against $\delta\!u$ gives
\begin{equation}\label{Eq.delta_u_energy} 
			\frac12 \frac{\mathrm d}{\dt}\|\sqrt\rho\delta\!u\|_{L^2}^2+\|\nabla \delta\!u(t)\|^2_{L^2}
			=-\int_{\R^3}\delta\!\rho\dot{\bar{u}}\cdot\delta\!u\,\dx-\int_{\R^3}(\rho\delta\!u\cdot\nabla\bar{u})\cdot\delta\!u\,\dx.
	\end{equation}
By H\"older's inequality and Sobolev embedding, we have
\begin{equation}\begin{aligned}\label{uniqueness.1}
 \left|\int_{\R^3}(\rho\delta\!u\cdot\nabla\bar{u})\cdot\delta\!u\,\dx\right|&\leq \|\rho\delta\!u\|_{L^3}\|\nabla\bar{u}\|_{L^2}\|\delta\!u\|_{L^6}\leq C\|\sqrt{\rho}\delta\!u\|^{\f12}_{L^2}\|\nabla\delta\!u\|^{\f32}_{L^2}\|\nabla\bar{u}\|_{L^2}\\
 &\leq \f14 \|\nabla\delta\!u\|^{2}_{L^2}+C \|\sqrt{\rho}\delta\!u\|^{2}_{L^2}\|\nabla\bar{u}\|^4_{L^2};
\end{aligned}\end{equation}
By duality, Sobolev embedding and Young's inequality, we have
\begin{align}
&\left|\int_{\R^3}\delta\!\rho\dot{\bar{u}}\cdot\delta\!u\,\dx\right|\leq \|\delta\!\rho\|_{\dot{W}^{-1,3}}\|\dot{\bar{u}}\cdot\delta\!u\|_{\dot{W}^{1,3/2}}\nonumber\\
 &\leq \|\delta\!\rho\|_{\dot{W}^{-1,3}}(\|\nabla\dot{\bar{u}}\|_{L^2}\|\delta\!u\|_{L^6}+\|\dot{\bar{u}}\|_{L^6}\|\nabla\delta\!u\|_{L^2})\leq C\|\delta\!\rho\|_{\dot{W}^{-1,3}}\|\nabla\dot{\bar{u}}\|_{L^2}\|\nabla\delta\!u\|_{L^2}\label{uniqueness.2}\\
 &\leq \f14 \|\nabla\delta\!u\|^{2}_{L^2}+C t^{-\f32}\|\delta\!\rho\|^2_{\dot{W}^{-1,3}} \cdot t^{\f32}\|\nabla\dot{\bar{u}}\|^2_{L^2}\leq\f14 \|\nabla\delta\!u\|^{2}_{L^2}+C X^2(t)t^{\f32}\|\nabla\dot{\bar{u}}\|^2_{L^2}.\nonumber
 \end{align}
Hence plugging \eqref{uniqueness.1} and \eqref{uniqueness.2} into \eqref{Eq.delta_u_energy} gives 
\begin{align*}
    \frac{\mathrm d}{\dt}\|\sqrt\rho\delta\!u\|_{L^2}^2+\|\nabla \delta\!u(t)\|^2_{L^2}\leq C\|\sqrt\rho\delta\!u(t)\|_{L^2}^2\|\nabla \bar u(t)\|_{L^2}^4+CX^2(t)t^{3/2}\|\nabla\dot{\bar{u}}(t)\|^2_{L^2}.
\end{align*}
By Gr\"onwall's lemma, $\nabla \bar u\in L^4(0,T;L^2(\R^3))$ and $t^{3/4}\nabla \dot{\bar u}\in L^2(0,T;L^2(\R^3))$, we have
\begin{align*}
\|\sqrt\rho\delta\!u(t)\|_{L^2}^2+\int_0^t\|\nabla\delta\!u(s)\|_{L^2}^2\,\ds&\leq C\exp\left(\int_0^t\|\nabla \bar u(s)\|_{L^2}^4\,\ds\right)\int_0^tX^2(s)s^{3/2}\|\nabla\dot{\bar u}(s)\|_{L^2}^2\,\ds\\
&\leq C\int_0^tX^2(s)\g(s)\,\ds,
\end{align*}
where $0\leq\g(s):=s^{3/2}\|\nabla\dot{\bar u}(s)\|_{L^2}^2\in L^1(0, T)$. Thus,
\begin{equation}\label{Eq.Y<X}
    Y^2(t)\leq C\int_0^tX^2(s)\g(s)\,\ds.
\end{equation}
It follows from \eqref{Eq.X<Y} and \eqref{Eq.Y<X} that $X^2(t)\leq C\int_0^tX^2(s)\g(s)\,\ds$, hence by Gr\"onwall's lemma we have $X(t)\equiv0$, then by \eqref{Eq.Y<X} we know that $Y(t)\equiv0$. This completes the proof of Theorem \ref{Thm.uniqueness}.
\end{proof}

\subsection{Proof of the uniqueness part of Theorem \ref{Thm.existence}}
Let $d=3$. Let $(\rho, u)$ and $(\bar \rho, \bar u)$ be two solutions to \eqref{INS} satisfying all the properties listed in Theorem \ref{Thm.existence} with the same initial data $(\rho_0, u_0)$. To prove $(\rho, u)=(\bar \rho, \bar u)$, it suffices to check that Theorem \ref{Thm.uniqueness} is applicable. Denote $\delta\!\rho:=\rho-\bar\rho$ and $\delta\!u:=u-\bar u$. Let $T>0$. 

Firstly, by \eqref{Eq.rho_W-13} we know that 
$$\|\rho(t)-\rho_0\|_{\dot{W}^{-1,3}}+\|\bar\rho(t)-\rho_0\|_{\dot{W}^{-1,3}}\leq Ct\|u_0\|_{\dot{H}^{\f12}},$$
thus $t^{-\f34}\delta\!\rho$ belongs to $L^{\infty}(0,T;\dot{W}^{-1,3}(\R^3))$. Next we  show that $
\delta\!u$ is in the energy space. Note that
\begin{equation}\begin{aligned}\label{Eq.u-u_0_L^2}
\int_{\R^3} \rho(t,x)|u(t,x)-\bar u(t,x)|^2\,\dx=\int_{\R^3}&\rho_0(x)|u(t,x)-\bar u(t,x)|^2\,\dx\\
&+\int_{\R^3}(\rho(t,x)-\rho_0(x)) |u(t,x)-\bar u(t,x)|^2\,\dx.   
\end{aligned}\end{equation}
By \eqref{Eq.rho_0ut_est}, we have 
\begin{equation}\begin{aligned}\label{Eq.u-u_0_rho_0}
\|\sqrt{\rho_0}(u(t)-u_0)\|_{L^2(\R^3)}&\leq C\int_0^t \|\sqrt{\rho_0}u_t(s)\|_{L^2}\,\ds=\int_0^t s^{-\f14}\|s^{\f14}\sqrt{\rho_0}u_t(s)\|_{L^2}\,\ds\\
&\leq Ct^{\f14}\|s^{\f14}\sqrt{\rho_0}u_t\|_{L^2(0,t;L^2)}\leq Ct^{\f14}\|u_0\|_{\dot{H}^{\f12}},
\end{aligned}\end{equation}
and the same inequality holds for $\bar u$. On the other hand, by duality, Sobolev embedding, \eqref{Eq.rho_W-13} and \eqref{Eq.low_order_est_rhou}, we have
\begin{align}
&\Bigl|\int_{\R^3}(\rho(t)-\rho_0) |u(t)-\bar u(t)|^2\,\dx\Bigr| \leq \|\rho(t)-\rho_0\|_{\dot{W}^{-1,3}}\|\nabla (|u(t)-\bar u(t)|^2)\|_{L^{\f32}}\nonumber\\
&\leq 2\|\rho(t)-\rho_0\|_{\dot{W}^{-1,3}}\|\nabla u(t)-\nabla\bar u(t)\|_{L^{2}}\|u(t)-\bar u(t)\|_{L^{6}} \nonumber\\
&\leq Ct\|\nabla u(t)-\nabla \bar u(t)\|^2_{L^{2}}\|u_0\|_{\dot{H}^{\f12}}\nonumber\\
&\leq Ct^{\f12} \|t^{1/4}(\nabla u,\nabla\bar u)\|^2_{L^{\infty}(\R^+;L^{2})}\|u_0\|_{\dot{H}^{\f12}}
\leq Ct^{\f12}\|u_0\|^3_{\dot{H}^{\f12}}.\label{Eq.u-u_0_duality}
\end{align}
Plugging \eqref{Eq.u-u_0_rho_0} and \eqref{Eq.u-u_0_duality} into \eqref{Eq.u-u_0_L^2} gives that $\sqrt\rho(u-\bar u)\in L^{\infty}(0,T;L^2(\R^3))$. By \eqref{Eq.nabla_u_L4L2}, we have $(\nabla u,\nabla\bar u)\in L^4(0,T;L^2(\R^3))\subset L^2(0,T;L^2(\R^3))$.
From this, we eventually conclude that $\sqrt\rho\delta\!u\in L^{\infty}(0,T;L^2(\R^3))$ and $\nabla\delta\!u\in L^2(0,T;L^2(\R^3))$. Finally, by Theorem \ref{Thm.existence}, we have $t^{\f12}\nabla \bar u\in L^2(0,T;L^{\infty}(\R^3))$ and $t^{\f34}\nabla\dot{\bar u}\in L^2(0,T;L^2(\R^3))$. Hence, all the assumptions of Theorem \ref{Thm.uniqueness} are satisfied by the solutions $(\rho, u)$ and $(\bar \rho, \bar u)$ constructed in Theorem \ref{Thm.existence}, which are thus equal to each other.

\section* {Acknowledgments}

D. Wei is partially supported by the National Key R\&D Program of China under the grant 2021YFA1001500. Z. Zhang is partially supported by NSF of China under Grant 12288101.

\end{document}